\shorttitle{High order diffusion approximation of the Erlang-C system} 
\newcommand{\R}{\mathbb{R}}
\newcommand{\Z}{\mathbb{Z}}
\newcommand{\EE}{\mathbb{E}}
\newcommand{\lipone}{\text{\rm Lip(1)}}
\newcommand{\Prob}{\mathbb{P}}
\providecommand{\abs}[1]{\left\lvert#1\right\rvert}
\providecommand{\norm}[1]{\lVert#1\rVert}
\numberwithin{equation}{section}
\begin{document}
\title{High order steady-state diffusion approximation of the Erlang-C system}

\authorone[Cornell University]{Anton Braverman}
\authortwo[Cornell University]{J.G. Dai} 
\emailone{ab2329@cornell.edu}
\emailtwo{jd694@cornell.edu}

\begin{abstract}
In this paper we introduce a new diffusion approximation for the steady-state customer count of the Erlang-C system. Unlike previous diffusion approximations, which use the steady-state distribution of a diffusion process with a constant diffusion coefficient, our approximation uses the steady-state distribution of a diffusion process with a \textit{state-dependent} diffusion coefficient. We show, both analytically and numerically, that our new approximation is an order of magnitude better than its counterpart. To obtain the analytical results, we use Stein's to show that a variant of the Wasserstein distance between the normalized customer count distribution and our approximation vanishes at a rate of $1/R$, where $R$ is the offered load to the system. In contrast, the previous approximation only achieved a rate of $1/\sqrt{R}$. We hope our results motivate others to consider diffusion approximations with state-dependent diffusion coefficients.
\end{abstract}
\keywords{Stein's method; steady-state; diffusion approximation; Erlang-C; high order; convergence rates.}
\ams{60K25}{60F99; 60J60}

\section{Introduction}
Starting with \cite{GurvHuanMand2014}, a number of recent papers \cite{Gurv2014, BravDai2015, BravDaiFeng2015} studied convergence rates for steady-state diffusion approximations. In this paper we focus on the $M/M/n$ system, known as the Erlang-C system, which was also studied in \cite{BravDaiFeng2015}. This system has $n$ homogeneous servers that serve customers in a first-in-first-serve manner. Customers arrive according to a Poisson process with rate $\lambda$, and customer service times are assumed to be i.i.d.\ having exponential distribution with mean $1/\mu$. An important quantity in the system is the offered load, defined as $R = \lambda/\mu$. The customer count process
\begin{align*}
X = \{X(t), t \geq 0\}
\end{align*}
is a continuous time Markov chain (CTMC), where $X(t)$ is the number of customers in the system at time $t$. This CTMC admits a stationary distribution if and only if 
\begin{align*}
R < n,
\end{align*}
which we assume from now on. Let $X(\infty)$ be a random variable having the stationary distribution of $X$, and let $\tilde X(\infty) = (X(\infty) - R)/\sqrt{R}$ be a normalized version. In \cite{BravDaiFeng2015}, the authors presented an approximation for $\tilde X(\infty)$, which we denote here as $Y_0(\infty)$. The random variable $Y_0(\infty)$ corresponds to the steady-state distribution of a diffusion process with \emph{constant} diffusion coefficient $2\mu$, and piece-wise linear drift
\begin{align}
b(x) = \mu \big[(x+\zeta)^- +\zeta\big], \quad \text{ where } \quad  \zeta = (R-n)/\sqrt{R}. \label{eq:bdef}
\end{align}
This random variable has density
\begin{align}
\eta(x) = \kappa \exp\Big({\int_0^x \frac{b(y)}{\mu}dy}\Big), \label{eq:eta}
\end{align}
where $\kappa$ is a normalizing constant. It was shown in \cite{BravDaiFeng2015} that for all $n \geq 1, \lambda > 0$, and $\mu > 0$ satisfying $1 \leq R < n$, the Wasserstein distance
\begin{align}
d_W(\tilde X(\infty), Y_0(\infty)) \equiv \sup_{h \in \lipone} \big| \EE h(\tilde X(\infty)) - \EE h(Y_0(\infty)) \big| \leq \frac{205}{\sqrt{R}}, \label{eq:oldmain}
\end{align}
where 
\begin{align*}
\lipone = \big\{h: \R \to \R \ \big|\  \abs{h(x)-h(y)} \leq \abs{x-y} \text{ for all $x,y\in \R$}\big\}.
\end{align*}
That is, the diffusion approximation error decreases at a rate of $1/\sqrt{R}$. The approximation error bounds presented for the systems studied in \cite{GurvHuanMand2014, Gurv2014, BravDai2015} are of similar magnitudes, decreasing at rates of $1/\sqrt{\lambda}$, where $\lambda$ is the arrival rate to the system of interest in each paper.

In this paper we present a different approximation for $\tilde X(\infty)$. Our approximation is the real-valued random variable $Y(\infty)$, whose density is given by 
\begin{equation}
  \label{eq:stdden}
  \nu(x)= \frac{\kappa}{a(x)} \exp\Big({\int_0^x \frac{2b(y)}{a(y)}dy}\Big),
\end{equation}
where $\kappa>0$ is the normalization constant, $b(x)$ is as in \eqref{eq:bdef}, and 
\begin{align}
& a(x)  = \mu \Big[ 1 + 1(x > -\sqrt{R})\Big(1 -  \frac{ (x + \zeta)^- + \zeta}{\sqrt{R}}\Big)\Big]\label{eq:adef}
\end{align}
This random variable has the steady-state distribution of a diffusion process with drift $b(x)$, and \emph{state dependent} diffusion coefficient $a(x)$.
\begin{theorem}
\label{thm:w2}
There exists a constant $C_{W_2} > 0$ (independent of $\lambda, n$, and $\mu$), such that for all $n \geq 1, \lambda > 0$, and $\mu > 0$ satisfying $1 \leq R < n $,
\begin{equation}
  \label{eq:newmain}
  d_{W_2}(\tilde X(\infty), Y(\infty)) \equiv \sup_{h \in W_2} \big| \EE h(\tilde X(\infty)) - \EE h(Y(\infty)) \big| \le \frac{C_{W_2}}{R},
\end{equation}
where 
\begin{align}
W_2 = \big\{h: \R \to \R\ \big|\  h(x), h'(x) \in \lipone \big\}. \label{eq:spacew2}
\end{align}
\end{theorem}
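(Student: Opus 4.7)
The plan is to apply Stein's method for steady-state diffusion approximations, in the spirit of \cite{BravDaiFeng2015}, but carefully exploiting the exact matching between the CTMC's jump variance and the state-dependent coefficient $a(x)$. Given a test function $h \in W_2$, let $f=f_h$ solve the Poisson equation associated with the generator $G_Y$ of the diffusion $Y$,
\begin{equation*}
\tfrac{1}{2}a(x)f''(x) + b(x)f'(x) = h(x) - \EE h(Y(\infty)), \qquad x \in \R.
\end{equation*}
Standard scale-and-speed arguments give an explicit representation of $f$ via successive quadratures involving $\nu$.

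Next I invoke the generator $G_X$ of the normalized CTMC $\tilde X$, for which stationarity yields $\EE G_X f(\tilde X(\infty)) = 0$. Taylor-expanding $f(\,\cdot\pm 1/\sqrt R\,)$ to fourth order and using that the drift and jump variance of the normalized CTMC are \emph{exactly} $b(x)$ and $a(x)$ (this is the design purpose of \eqref{eq:adef}) gives
\begin{equation*}
G_X f(x) = b(x)f'(x) + \tfrac{1}{2}a(x)f''(x) + \tfrac{b(x)}{6R}f'''(x) + E_4(x),
\end{equation*}
with $|E_4(x)| \le \frac{a(x)}{24R}\sup_{|y-x|\le 1/\sqrt R}|f^{(4)}(y)|$. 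This exact matching of $a(x)$ is the mechanism of the improvement: in \cite{BravDaiFeng2015} the approximating diffusion had constant coefficient $2\mu$, leaving a mismatch $\tfrac{1}{2}(a(x)-2\mu)f''(x) = O(1/\sqrt R)$, which dictated the $1/\sqrt R$ rate; here that term is identically zero. Subtracting the Poisson equation from the display and taking expectations,
\begin{equation*}
\bigl|\EE h(\tilde X(\infty)) - \EE h(Y(\infty))\bigr| \le \tfrac{1}{6R}\EE\bigl|b(\tilde X(\infty))f'''(\tilde X(\infty))\bigr| + \tfrac{1}{24R}\EE\Bigl[a(\tilde X(\infty))\sup_{|y-\tilde X(\infty)|\le 1/\sqrt R}|f^{(4)}(y)|\Bigr].
\end{equation*}

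The remaining task is to show that both expectations on the right are bounded by an $R$-independent constant, which splits into two subproblems. First, pointwise estimates on $f'''$ and $f^{(4)}$ in terms of the Lipschitz constants of $h$ and $h'$ (both $\le 1$): differentiating the Stein equation once and twice expresses $f'''$ and $f^{(4)}$ in terms of $h',h''$ and lower-order derivatives of $f$, divided by $a(x)$, and the exponential tail decay of $\nu$ controls the growth of $f'$ and $f''$. Because $a(x)$ is only piecewise linear with corners at $x=-\sqrt R$ and $x=-\zeta$, the derivative $f'''$ is only piecewise continuous, and the argument must be carried out piecewise, paralleling the treatment of $b$ in \cite{BravDaiFeng2015}. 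Second, uniform-in-$R$ moment and tail bounds on $\tilde X(\infty)$ (in particular weighted moments of $b(\tilde X(\infty))$ and $a(\tilde X(\infty))$), obtainable via a Lyapunov/Foster argument on the CTMC that is independent of the diffusion approximation itself.

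The main obstacle will be the derivative estimates, specifically getting constants that are independent of $R$ and of $\zeta \in (-\sqrt R, 0]$ while handling the degeneration $a(-\sqrt R) = 0$ at the ``zero customers'' boundary. I expect to circumvent this by showing that $\tilde X(\infty)$ concentrates on scales of order $1$, keeping it away from the boundary $-\sqrt R$ with overwhelming probability; in the worst case, one works with modified $a,b$ near the boundary and controls the truncation error separately using the tail bound on $\tilde X(\infty)$. Granted these estimates, each right-hand term is $O(1)$, the total error is $O(1/R)$, and \eqref{eq:newmain} follows with an explicit $C_{W_2}$.
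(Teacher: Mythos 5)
Your high-level plan is right and matches the paper's: exact matching of the second-order Taylor term via the state-dependent $a(x)$, generator coupling, gradient and moment bounds, and a piecewise treatment driven by the corners of $a$ and $b$. Two genuine issues, however.

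First, the Taylor-expansion step as you have written it does not quite work. You expand to fourth order and bound the remainder by $\sup_{|y-x|\le 1/\sqrt R}|f^{(4)}(y)|$, but the Poisson-equation solution $f_h$ has only an absolutely continuous second derivative: $f_h'''$ has jump discontinuities at $x=-1/\delta$ and $x=-\zeta$ (see \eqref{eq:hf3} and the remark after Lemma~\ref{lem:gb}), so $f_h^{(4)}$ does not exist at those points and a Lagrange-form fourth-order remainder is unavailable there. The paper instead stops the expansion at third order and keeps an integral remainder involving the \emph{increment} $f_h'''(y)-f_h'''(x-)$, which it then bounds piecewise (Lemma~\ref{lem:eterm}). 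Near the jumps, that increment is $O(1)$ rather than $O(\delta)$, and this is the crux: the expectations of the remainders pick up an extra contribution proportional to $\pi_0+\pi_n = \Prob(\tilde X(\infty)\in\{-1/\delta,-\zeta\})$. You must then separately establish $\pi_0,\pi_n \le C\delta$ (the paper does this via \eqref{eq:pi0}, \eqref{eq:pin} and the Kolmogorov bound from \cite{BravDaiFeng2015}) to recover the $O(1/R)$ rate. Your proposal gestures at "piecewise" and "truncation error" but does not identify this specific mechanism, which is not a tail estimate on $\tilde X(\infty)$ in the usual sense but a point-mass bound at two particular lattice sites.

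Second, the stated "main obstacle" is a non-issue: $a(-\sqrt R)=\mu$, not $0$, since the indicator $1(x>-\sqrt R)$ kills the state-dependent term there; in fact $a(x)\ge\mu>0$ uniformly, so there is no degeneracy at the empty-system boundary and no need to truncate or modify $a,b$ near $x=-\sqrt R$. The real difficulty is the one just described -- controlling the jump of $f_h'''$ -- together with obtaining gradient bounds uniformly over $\zeta\in(-\sqrt R,0)$, for which you correctly anticipate working piecewise and using the exponential tails of $\nu$.
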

Theorem~\ref{thm:w2} states that the approximation error of $Y(\infty)$ decreases at a rate of $1/R$, and for this reason we refer to $Y(\infty)$ as the high order approximation. 
This rate is an order of magnitude better than the rates in any of the previously mentioned papers. The class of functions $W_2$ in \eqref{eq:newmain} is not significantly smaller than $\lipone$ in \eqref{eq:oldmain}, meaning that the two statements are comparable. We will see in Appendix~\ref{app:probmet} that $W_2$ is a rich enough class of functions to imply convergence in distribution.

The explicit value of the constant $C_{W_2}$ can be recovered from our proofs, but it would be quite large (on the order of $10^6$), meaning that $Y_0(\infty)$ could potentially be a better approximation than $Y(\infty)$ for small values of $R$. To check that this is not the case, we perform a numerical study to compare the accuracy of both approximations. Some results are presented in Figure~\ref{fig1} and Tables~\ref{tabbenefit} and \ref{tabrates} below, and indicate that $Y(\infty)$ is indeed a better approximation than $Y_0(\infty)$, and that the issue raised above is simply an artifact of our method. 

We mentioned that both $Y_0(\infty)$ and $Y(\infty)$ correspond to diffusion processes. Both of these processes have the same piece-wise linear drift $b(x)$, but the process corresponding to $Y(\infty)$ has a state-dependent diffusion coefficient, whereas the process corresponding to $Y_0(\infty)$ has a constant diffusion coefficient. Using a state-dependent diffusion coefficient is the reason for our improved accuracy. One consequence of our main results is that 
\begin{align}
\big| \EE X(\infty) - \big( R + \EE Y(\infty) \big) \big| \leq \frac{C_{W_2}}{\sqrt{R}}. \label{eq:const_error}
\end{align}
That is, the approximation error of the unscaled mean shrinks at a rate of $1/\sqrt{R}$. The previous approximation in \cite{BravDaiFeng2015} was only able to guarantee a constant gap for the approximation of the mean. 
 
The numerical results in Figure~\ref{fig1} and Tables~\ref{tabbenefit} and \ref{tabrates} show that $Y(\infty)$ consistently outperforms $Y_0(\infty)$. In Table~\ref{tabbenefit} we see that for large or heavily loaded systems, i.e. when $R$ is either large or close to $n$, the approximation $Y_0(\infty)$ performs reasonably well, and the accuracy gained from using $Y(\infty)$ is not as impressive. However, the accuracy gain of $Y(\infty)$ is much more significant for smaller systems with lighter loads. This is further illustrated in Figure~\ref{fig1}, where we approximate the probability mass function (PMF) of $\tilde X(\infty)$ for a small system (even though Theorem~\ref{thm:w2} does not guarantee anything about the approximation of the PMF). In Table~\ref{tabrates} we see that the errors of $Y_0(\infty)$ and $Y(\infty)$ indeed decrease at a rate of $1/\sqrt{R}$ and $1/R$, respectively. Furthermore, the table suggests that the approximation error of the second moment also decreases at a rate of $1/R$, even though \eqref{eq:newmain} does not guarantee this. Numerically, we observed a rate of $1/R$ for higher moments as well. This is not surprising, as there is nothing preventing us from repeating the analysis in this paper for higher moments. Additional numerical results are presented in Appendix~\ref{app:numeric}.

 \begin{figure}[tb]
  \vspace{-1.75in}
 \centerline{\includegraphics[width=120mm,keepaspectratio]{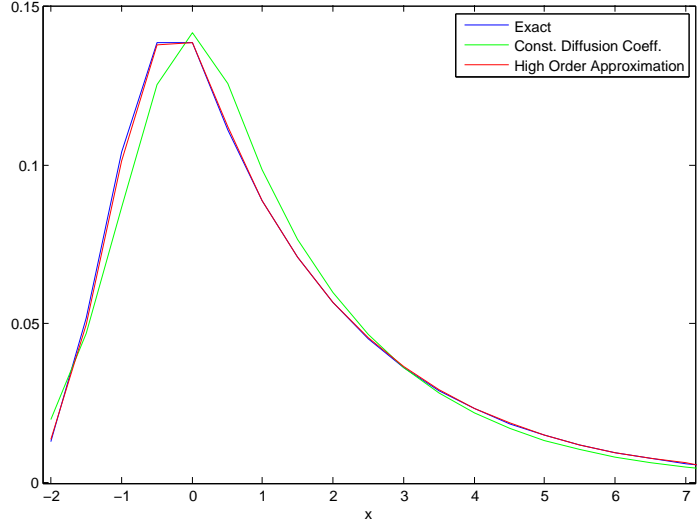}}
  \vspace{-1.75in}
 \caption{The plot above corresponds to a system with $n = 5$ and $R = 4$. The blue line plots $\Prob(\tilde X(\infty) = x)$, where $x = (k-R)/\sqrt{R}$ with $k \in \Z_+$, which is the probability mass function of $\tilde X(\infty)$. The green and red lines plot the approximations $\Prob\Big(x - \frac{1}{2\sqrt{R}} \leq Y_0(\infty) \leq x + \frac{1}{2\sqrt{R}}\Big)$ and $\Prob\Big(x - \frac{1}{2\sqrt{R}} \leq Y(\infty) \leq x + \frac{1}{2\sqrt{R}}\Big)$, respectively. \label{fig1}}
 
 \end{figure}

\begin{table}[tb]
  \begin{center}
   \begin{tabular}{rc|cc|cc }
\multicolumn{6}{c}{$n=5$} \\
$R$ & $\EE \tilde X(\infty)$ & $\big| \EE Y_0(\infty) - \EE \tilde X(\infty)\big|$  &Relative Error & $\big| \EE Y(\infty) - \EE \tilde X(\infty)\big|$& Relative Error\\
\hline
 3        &   0.20  &$5.87\times 10^{-2}$  & 28.69\% & $9.34\times 10^{-3}$ & 4.57\% \\
 4        &  1.11  & $9.91\times 10^{-2}$  & 8.95\% & $1.12\times 10^{-2}$ & 1.08\% \\
 4.9        &  21.04  &$1.28\times 10^{-1}$ & 0.61\% & $1.29\times 10^{-2}$ & 0.06\% \\
 4.95        &  43.39  & $1.29\times 10^{-1}$ & 0.30\% & $1.29\times 10^{-2}$ & 0.03\% \\
 4.99        & 222.26  & $1.30\times 10^{-1}$& 0.06\% & $1.29\times 10^{-2}$ & 0.006\% \\
  \end{tabular}
  \\~\\~\\
  \begin{tabular}{rc|cc|cc }
\multicolumn{6}{c}{$n=100$} \\
$R$ & $\EE \tilde X(\infty)$ & $\big| \EE Y_0(\infty) - \EE \tilde X(\infty)\big|$  &Relative Error & $\big| \EE Y(\infty) - \EE \tilde X(\infty)\big|$& Relative Error\\
\hline
 60 & $2.97\times 10^{-7}$ & $2.73\times 10^{-7}$ & 91.83\% & $5.11\times 10^{-8}$ &17.24\% \\
 80 & $8.79\times 10^{-3}$  & $2.25\times 10^{-3}$ & 25.60\%& $1.03\times 10^{-4}$ & 1.17\% \\
 98 & $3.84$ & $2.85\times 10^{-2}$ & 0.74\% & $7.00\times 10^{-4}$ & 0.02\% \\
99 & $8.78$ & $3.04\times 10^{-2}$ & 0.35\% & $7.26\times 10^{-4}$ & 0.008\% \\
99.8 & $48.74$ & $3.19\times 10^{-2}$ & 0.07\% & $7.46\times 10^{-4}$ & 0.002\% \\
  \end{tabular}
  \end{center}
  \caption{The new approximation $Y(\infty)$ consistently outperforms $Y_0(\infty)$. \label{tabbenefit}}
\end{table}

\begin{table}[h!]
\begin{center}\setlength\tabcolsep{3pt}
\begin{tabular}{rc | c | c | c }
 $n$ & $R$ &$\EE \tilde X(\infty)$ & $\big| \EE \tilde X(\infty) - \EE Y_0(\infty)\big|$ & $\big| \EE \tilde X(\infty) - \EE Y(\infty)\big|$ \\
\hline
5    & 4       & 1.11 & $9.9 \times 10^{-2}$ & $1.2 \times 10^{-2}$ \\
50   & 46.59   & 1.04 & $3.2 \times 10^{-2}$ & $1.2 \times 10^{-3}$ \\
500  & 488.94  & 1.02 & $1.0 \times 10^{-2}$ & $1.2 \times 10^{-4}$  \\
5000 &  4965   & 1.01 & $3.3 \times 10^{-3}$ & $1.2 \times 10^{-5}$  \\
\end{tabular}
\end{center}

\begin{center}\setlength\tabcolsep{3pt}
\begin{tabular}{rc | c | c | c }
 $n$ & $R$ &$\EE (\tilde X(\infty))^2$ & $\big| \EE (\tilde X(\infty))^2 - \EE (Y_0(\infty))^2\big|$ & $\big| \EE (\tilde X(\infty))^2 - \EE (Y(\infty))^2\big|$ \\
\hline
5    & 4       & 6.54 & 1.00  & $6 \times 10^{-2}$ \\
50   & 46.59   & 5.84 &  0.30 & $5.7 \times 10^{-3}$ \\
500  & 488.94  & 5.63 & 0.092 &  $5.6 \times 10^{-4}$  \\
5000 &  4965   & 5.57 & 0.029 &  $5.5 \times 10^{-5}$  \\
\end{tabular}
\end{center}

\caption{As the offered load increases by a factor of $10$, the approximation error of $Y_0(\infty)$ shrinks at a rate of $\sqrt{10}$, whereas the approximation error of $Y(\infty)$ shrinks at a rate of $10$. Similar experiments for moments higher than the second yield consistent results. \label{tabrates}}
\end{table}

To derive our high-order approximation, we use Stein's method \cite{Stei1972,Stei1986}, which is a well studied method for establishing convergence rates that has been widely used
in probability, statistics, and their wide range of applications such
as bioinformatics; see, for example, the survey papers \cite{Ross2011, Chat2014}, the recent book \cite{ChenGoldShao2011} and the
references within. Barbour \cite{Barb1990} has used Stein's
method to establish convergence rates for approximations by a diffusion process.  In this paper, we rely on the framework that was developed in \cite{BravDai2015} for steady-state diffusion approximations of CTMCs. A simpler illustration of this framework is provided in \cite{BravDaiFeng2015}. 

Our new approximation arises naturally from the use of Stein's method. Loosely speaking, to use the framework one begins with a generator of a CTMC whose stationary distribution one wishes to approximate. By performing Taylor expansion on this generator, which contains some sort of difference equations, one extracts a second order differential operator that becomes the generator of the diffusion approximation. The rest of the terms in the expansion become the approximation error. Previous diffusion approximations that had constant diffusion coefficients captured all the first order terms, but only part of the second order terms in the expansion. In contrast, our diffusion approximation captures the entire first and second order terms in the Taylor expansion of the CTMC generator, thus making the error smaller. The method we use to derive our high order approximation is not unique to the $M/M/n$ system, and is applicable to other systems that are amenable to analysis using the Stein framework. 

In the presence of multiple possible approximations for a system, one must choose which one to use. It was suggested in Remark 2.2 of \cite{GurvHuanMand2014} that a diffusion approximation with a constant diffusion coefficient is of the same quality as one with a state-dependent diffusion coefficient. However, the results in our paper contradict this. When applying Stein's method, we intuitively expect that the best diffusion approximation is one that captures the entire second order Taylor expansion of the CTMC generator. This also suggests that the method can be used as a practical engineering tool to quickly find good approximations that can be used even when error bounds cannot be rigorously established. 

As an example, in \cite{DaiShi2015c} the authors develop a model to study the patient count at a hospital. They use Stein's method to find a diffusion approximation for the midnight patient count, an important quantity in their model. In that paper, the authors consider two diffusion approximations -- one with a constant diffusion coefficient, and another one with a state-dependent diffusion coefficient. The constant coefficient approximation is excellent for a single cluster of 500 beds (in their model, beds play the role of servers). However, a ward cluster of 50 beds or 15 beds is more common in a hospital. For example, a typical ward has 30-60 beds, and  it is quite common for an intensive care unit ward to have 10-20 beds. As demonstrated in their paper, the constant approximation performs poorly with only $20$ beds. However, the state-dependent diffusion coefficient approximation performs remarkably well numerically, although they did not provide an error bound for this approximation. Our paper provides a potential explanation for the excellent performance of their state-dependent approximation. 

Although $Y(\infty)$ is a better approximation than $Y_0(\infty)$ for the Erlang-C system, we do not claim  that it is the best approximation available in the current literature for performance measures of interest in the Erlang-C system. Indeed, in \cite{JansLeeuZwar2011} the authors develop a ``corrected diffusion approximation" for delay probability $\Prob(X(\infty) \geq n)$, the probability that a customer entering the system has to wait for service. The results of that paper are not based on using a different diffusion process to approximate the Erlang-C system. Rather, they rely on a series representation of the cumulative distribution function of a Poisson random variable that was established in \cite[Theorem 2]{JansLeeuZwar2008a}. See also Theorem 8 of the same paper. The purpose of our paper is not to compete with \cite{JansLeeuZwar2011} for the most accurate approximation of specific quantities in the Erlang-C model. Rather it is to emphasize the accuracy gains one can achieve by using diffusion processes with state-dependent diffusion coefficients over ones with constant diffusion coefficients.

We hope that our result will inspire other researchers to focus on diffusion processes that fully capture the second order Taylor expansion term in their models as well. One important direction for future work is to find alternatives to Stein's method for deriving such high order approximations, because so far the class of problems that can be analyzed by Stein's method remains limited.\ The rest of the paper is structured as follows. In Section~\ref{sec:roadmap} we discuss the Poisson equation, gradient bounds and moment bounds that we need to prove Theorem~\ref{thm:w2}. The proofs of these are quite technical and are provided in the appendix. In Section~\ref{sec:proofW} we prove Theorem~\ref{thm:w2}.



We end the introduction by introducing the following notation. For $a, b \in \R$, we use $a^+, a^-, a \wedge b$, and $a \vee b$ to denote $\max(a,0)$, $\max(-a,0)$,  $\min(a,b)$, and $\max(a, b)$, respectively. For a function $f:\R \to \R$, we write $\norm{f}$ to denote $\sup_{x \in \R} \abs{f(x)}$.


\section{Roadmap for Proofs}
\label{sec:roadmap}
In \cite{BravDaiFeng2015} the authors provide a detailed introduction of the generic procedure of using Stein's method for steady-state diffusion approximations. The procedure involves three components -- the Poisson equation and gradient bounds, generator coupling, and moment bounds. In this section we state these components for the Erlang-C system. 

Recall that $R = \lambda/\mu < n$ is the offered load of the system. For notational convenience, we define $\delta>0$ as
\begin{equation}
  \label{eq:delta}  \delta = \frac{1}{ \sqrt{R}} = \sqrt{\frac{\mu}{\lambda}}.
\end{equation}
\subsection{Poisson Equation}
\label{sec:diffPoisson}
The random variable $Y(\infty)$ in
Theorem~\ref{thm:w2} is well-defined and
its density is given in (\ref{eq:stdden}). It turns out that $Y(\infty)$
has the stationary distribution of a diffusion process $Y=\{Y(t), t\ge
0\}$, which we define shortly. We do not prove this claim in this paper since it is not used anywhere in this paper. Nevertheless, it is helpful to think of $Y(\infty)$ in the context of diffusion processes.  The diffusion process $Y$ is defined by its generator, which is given by
\begin{equation}
  \label{eq:GY}
G_Y f(x) =  b(x)f'(x) +  \frac{1}{2}a(x) f''(x) \text{ \quad for $x \in \R$}, \ f \in C^2(\R),
\end{equation}
where $a(x)$ and $b(x)$ are defined in (\ref{eq:adef}) and \eqref{eq:bdef}, respectively.
Clearly,  $b(0)=0$, and  $b(x)$ is Lipschitz continuous. Indeed,
\begin{displaymath}
\abs{  b(x) -b(y)} \le \mu  \abs{x-y} \quad \text{ for } x, y\in \R.
\end{displaymath}
Furthermore, 
\begin{align*}
a(x) \geq \mu > 0, \quad x \in \R,
\end{align*} 
and $a(x)$ is also Lipschitz continuous.
On the other hand, the random variable $Y_0(\infty)$ has the stationary distribution of the diffusion process with generator 
\begin{align}
G_{Y_0} f(x) = b(x)f'(x) +  \frac{1}{2} a(0) f''(x) \text{ \quad for $x \in \R$}, \ f \in C^2(\R). \label{eq:geny0}
\end{align}
The difference between \eqref{eq:GY} and \eqref{eq:geny0} is that the former process has a non-constant diffusion coefficient, whereas the latter has a constant one. Using a non-constant diffusion coefficient is what makes $Y(\infty)$ a better approximation than $Y_0(\infty)$.

We fix $h(x) \in W_2$ with $h(0) = 0$, and consider the Poisson equation 
\begin{align} \label{eq:poisson}
G_Y f_h(x) = \frac{1}{2} a(x) f_h''(x) + b(x) f_h'(x) = \EE h(Y(\infty)) - h(x), \quad  x\in \R.
\end{align}
We use the Lipschitz property of $h(x)$ to see that
\begin{align*}
\abs{\EE h(Y(\infty))} \leq  \EE \big|Y(\infty)\big| < \infty, 
\end{align*}
where the finiteness of $\EE \big|Y(\infty)\big|$ will be proved in \eqref{eq:fbound7}.
One may verify by differentiation that \eqref{eq:poisson} has a family of solutions of the form
\begin{align} \label{eq:poissonsolution}
f_h(x) = a_1 + \int_{0}^{x} \bigg[ a_2 \frac{1}{p(u)} + \frac{1}{p(u)} \int_{-\infty}^{u} \frac{2}{a(y)} \big( \EE h(Y(\infty)) - h(y)\big) p(y) dy\bigg] du, 
\end{align}
where $a_1, a_2 \in \R$ are arbitrary constants, and
\begin{align}
p(x)=\exp\Big({\int_0^x \frac{2b(y)}{a(y)}dy}\Big), \quad x \in \R. \label{eq:pold}
\end{align}
Taking expected values on both sides of (\ref{eq:poisson}) with respect to $\tilde X(\infty)$ yields
\begin{align}
\big| \EE h(\tilde X(\infty)) - \EE h(Y(\infty)) \big| =&\ \big| \EE G_Y f_h(\tilde X(\infty)) \big|.\label{eq:error1}
\end{align}
We will focus on bounding the right hand side, which we do by comparing the generator $G_Y$ to the generator of $\tilde X$ in the next section.

\subsection{Comparing Generators}
\label{sec:compare}
Consider the normalized CTMC
\begin{align*}
\tilde X = \{ \tilde X(t) = \delta(X(t) - R),\ t \geq 0\}.
\end{align*}
It is then clear that $\tilde X(\infty)= \delta\big(X(\infty)-R\big)$ has the stationary distribution of $\tilde X$. The CTMC $\tilde X$ also has a generator.  For any $k \in \Z_+$, we define $x = x_k = \delta(k - R)$. Then for any function $f:\R\to\R$, the generator of $\tilde X$ is given by  
\begin{align}\label{eq:GX}
G_{\tilde X} f(x) = \lambda (f(x + \delta) - f(x)) + d(k) (f(x-\delta) - f(x)),
\end{align}
where
\begin{align} \label{eq:deathrate}
d(k) = \mu (k \wedge n),
\end{align}
is the departure rate corresponding to the system having $k$ customers.  One may check that 
\begin{align*}
b(x) = \delta( \lambda - d(k)), \quad a(x) = \delta^2 (\lambda + d(k) 1(k > 0)) .
\end{align*}
The relationship between $G_{\tilde X}$ and the stationary distribution of $\tilde X$ is illustrated by the following lemma. For a proof of this lemma, we refer the reader to Lemma~1 of \cite{BravDaiFeng2015}, whose statement and proof are nearly identical. In \cite{BravDaiFeng2015}, the authors used a cubic Lyapunov function to show that $\EE \big( X(\infty)\big)^2 < \infty$. To prove Lemma~\ref{lem:gz}, we would repeat their arguments with a quartic Lyapunov function to show that $\EE \big( X(\infty)\big)^3 < \infty$.
\begin{lemma} \label{lem:gz}
Let $f(x): \R \to \R$ be a function such that $\abs{f(x)} \leq C(1+x)^3$ for some $C > 0$ (i.e.\ $f(x)$ is dominated by a cubic function), and assume that the CTMC $\tilde X$ is positive recurrent. Then
\begin{align*}
\EE \big[ G_{\tilde X} f(\tilde X(\infty)) \big] = 0.
\end{align*}
\end{lemma}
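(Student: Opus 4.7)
The plan is to follow the proof of Lemma~1 of \cite{BravDaiFeng2015} essentially verbatim, with the only modification being the substitution of a quartic Lyapunov function for the cubic one used there. The identity $\EE[G_{\tilde X} f(\tilde X(\infty))] = 0$ is the standard generator--stationarity relation $\pi G = 0$ for a countable-state CTMC, which becomes rigorous once we know that each of the four terms composing $G_{\tilde X} f$ is integrable under the stationary distribution $\pi$ of $X$. Because $\lambda$ is constant, $d(k) = \mu(k \wedge n) \leq \mu n$ is bounded, and $\abs{f(x)}$ is dominated by a cubic, this integrability requirement collapses to the single moment bound $\EE[(X(\infty))^3] < \infty$.

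The bulk of the work therefore lies in establishing this third moment bound. I would proceed by a Foster--Lyapunov argument using the quartic function $V(k) = k^4$, $k \in \Z_+$. Writing out the action of the unscaled generator on $V$,
\begin{align*}
G_X V(k) = \lambda\bigl((k+1)^4 - k^4\bigr) + d(k)\bigl((k-1)^4 - k^4\bigr),
\end{align*}
and using the expansions $(k+1)^4 - k^4 = 4k^3 + O(k^2)$ and $(k-1)^4 - k^4 = -4k^3 + O(k^2)$, one sees that for $k \geq n$ the leading cubic coefficient in $G_X V(k)$ equals $4(\lambda - \mu n) = -4\mu(n-R)$, which is strictly negative under the stability assumption $R < n$ already in force. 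A finite exceptional set absorbs the regime $k < n$ where $d(k) = \mu k$, yielding a drift inequality of the form $G_X V(k) \leq -c k^3 + K \mathbf{1}_{\{k \leq k_0\}}$ with explicit constants $c, K, k_0$. Integrating this inequality against $\pi$ and invoking the standard $f$-norm ergodic criterion (for example via Chapter~14 of Meyn and Tweedie) then yields $\EE[(X(\infty))^3] < \infty$.

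Once the moment bound is in hand, the lemma follows by a routine truncation. For each bounded truncation $f_m(x) = f(x)\mathbf{1}_{\{\abs{x} \leq m\}}$ the identity $\EE[G_{\tilde X} f_m(\tilde X(\infty))] = 0$ holds trivially, since bounded functions are automatically integrable and the jump rates out of any state are finite. Dominated convergence, with the integrable majorant $(\lambda + \mu n)\cdot 2C(1 + \abs{\tilde X(\infty)} + \delta)^3$ supplied by the third-moment bound, then permits sending $m \to \infty$ and recovers $\EE[G_{\tilde X} f(\tilde X(\infty))] = 0$. The only delicate point is the bookkeeping of the polynomial expansions combined with the piecewise definition of $d(k)$ when verifying the drift inequality; however, this is a finite, explicit computation of the same flavor as the cubic case worked out in \cite{BravDaiFeng2015}, so no genuinely new ideas are needed beyond raising the degree of the Lyapunov function by one.
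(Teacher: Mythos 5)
Your proposal is correct and follows exactly the approach the paper intends: the authors simply refer to Lemma~1 of \cite{BravDaiFeng2015} and note that one should rerun that argument with a quartic Lyapunov function to obtain $\EE[(X(\infty))^3]<\infty$, which is precisely what you spell out. The only content your write-up adds is the explicit drift computation and the truncation/dominated-convergence step, both of which the paper leaves implicit by citing prior work.
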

\begin{remark}
As we will see in Lemma~\ref{lem:gb}, the solutions to the Poisson equation \eqref{eq:poisson} have bounded second derivatives, meaning that they satisfy the conditions of Lemma~\ref{lem:gz}.
\end{remark}

Suppose for now that for any $h(x) \in W_2$, the solution to the Poisson equation $f_h(x)$ satisfies the conditions of Lemma~\ref{lem:gz}. Applying Lemma~\ref{lem:gz} to \eqref{eq:error1}, we see that
\begin{align}
\big| \EE h(\tilde X(\infty)) - \EE h(Y(\infty)) \big| =&\ \big| \EE G_Y f_h(\tilde X(\infty)) \big| \notag \\
=&\ \big| \EE G_{\tilde X} f_h(\tilde X(\infty)) - \EE G_Y f_h(\tilde X(\infty)) \big| \notag \\
\leq&\ \EE \big| G_{\tilde X} f_h(\tilde X(\infty)) -  G_Y f_h(\tilde X(\infty)) \big|, \label{eq:gen_bound}
\end{align}
which is the standard generator coupling step when applying Stein's method to steady-state diffusion approximations.
\subsection{Taylor Expansion}
\label{sec:taylor}

To bound the right side of (\ref{eq:gen_bound}), 
we study the difference $G_{\tilde X} f_h(x) - G_Y f_h(x)$. For that we perform
Taylor expansion on $G_{\tilde X}f_h(x)$. It is not hard to deduce from \eqref{eq:poissonsolution}
that $f_h''(x)$ exists for all $x \in \R$ and is absolutely continuous. Furthermore, $f_h'''(x)$ is discontinuous at the points $x = -1/\delta$, and $x = -\zeta$, and therefore we write $f_h'''(x-)$ to denote $\lim_{u \uparrow x}f_h'''(u)$. We first define 
\begin{align}
\epsilon_1(x) =&\ \frac{1}{2} \int_x^{x+\delta} (x+\delta -y)^2(f_h'''(y)-f_h'''(x-))dy, \quad x \in \R, \label{eq:eps1def} \\
\epsilon_2(x) =&\  -\frac{1}{2} \int_{x-\delta}^{x} (y-(x-\delta))^2(f_h'''(y)-f_h'''(x-))dy, \quad x \in \R. \label{eq:eps2def}
\end{align}
Now observe that
\begin{align*}
f_h(x+\delta) -f_h(x) =&\  f_h'(x) \delta  + \int_x^{x+\delta} (f_h'(y)- f_h'(x))dy  \\
 =&\  f_h'(x) \delta  + \int_x^{x+\delta} (x+\delta -y)f_h''(y)dy  \\
=&\  f_h'(x) \delta  +  \frac{1}{2}\delta^2 f_h''(x) +  \int_x
^{x+\delta} (x+\delta -y)(f_h''(y)-f_h''(x))dy  \\
=&\  f_h'(x) \delta  +  \frac{1}{2}\delta^2 f_h''(x) + \frac{1}{6}\delta^3 f_h'''(x-) \\
&+ \frac{1}{2} \int_x
^{x+\delta}  (x+\delta -y)^2(f_h'''(y)-f_h'''(x-))dy  \\
=&\  f_h'(x) \delta  + \frac{1}{2}\delta^2 f_h''(x) + \frac{1}{6}\delta^3 f_h'''(x-)+  \epsilon_1(x),
\end{align*}
and similarly, one can check that
\begin{align*}
 (f_h(x-\delta) -f_h(x))
&= -f_h'(x) \delta  +  \frac{1}{2}\delta^2 f_h''(x)-\frac{1}{6}\delta^3 f_h'''(x-)  + \epsilon_2(x).
\end{align*}
Then for any $k \in \Z_+$, and $x = x_k = \delta(k -
R)$, we recall that $b(x) = \delta(\lambda - d(k))$ to see
that
\begin{align}
  G_{\tilde X}f_h(x) =&\ \lambda  \delta f_h'(x) + \lambda \frac{1}{2}\delta^2 f_h''(x) + \frac{1}{6}\lambda \delta^3 f_h'''(x-) + \lambda \epsilon_1(x) \notag \\
  & - d(k) \delta f_h'(x) + d(k) \frac{1}{2} \delta^2 f_h''(x) -d(k) \frac{1}{6}\delta^3 f_h'''(x-) + d(k)
  \epsilon_2(x) \notag \\
=&\ b(x) f_h'(x) + (\lambda +d(k)1(x \geq -1/\delta)) \frac{1}{2} \delta^2 f_h''(x) \notag \\
& + \frac{1}{6}\delta^3(\lambda - d(k))f_h'''(x-) + \lambda \epsilon_1(x)+ (\lambda-\frac{1}{\delta}b(x) )
  \epsilon_2(x) \notag \\
=&\ G_Y f_h(x) + \frac{1}{6}\delta^2b(x)f_h'''(x-) + \lambda(\epsilon_1(x)+\epsilon_2(x)) - \frac{1}{\delta}b(x)\epsilon_2(x). \label{eq:taylor}
\end{align}
Therefore, 
\begin{align}
\Big| \EE h(\tilde X(\infty)) - \EE h(Y(\infty)) \Big| \leq&\ \frac{1}{6} \delta^2 \EE \Big[ \big|f_h'''(\tilde X(\infty)-)b(\tilde X(\infty)) \big| \Big] + \lambda \EE\Big[ \big| \epsilon_1(\tilde X(\infty))\big|\Big] \notag \\
&+ \lambda \EE\Big[ \big| \epsilon_2(\tilde X(\infty))\big|\Big] +  \frac{1}{\delta}  \EE\Big[ \big|b(\tilde X(\infty)) \epsilon_2(\tilde X(\infty))\big|\Big], \label{eq:first_bounds}
\end{align}
where $\epsilon_1(x)$ and $\epsilon_2(x)$ are as in \eqref{eq:eps1def} and \eqref{eq:eps2def}.

Our approximation $Y(\infty)$ is much better than the one used in \cite{BravDaiFeng2015}, and the reason for this can be seen in the Taylor expansion in \eqref{eq:taylor}. Since we use a non-constant diffusion coefficient $a(x)$, our diffusion approximation is able to capture the entire second order term in the Taylor expansion of $G_{\tilde X} f_h(x)$ (i.e.\ all the terms that correspond to $f_h'(x)$ and $f_h''(x)$). In contrast, the approximation in \cite{BravDaiFeng2015} uses a constant diffusion coefficient $a(0) = 2\mu $, meaning that they have an extra error term of the form 
\begin{align*}
\frac{1}{2}\delta^2\EE \Big[ \big| f_h''(\tilde X(\infty)) \big( a(\tilde X(\infty)) - a(0) \big)   \big| \Big],
\end{align*}
which turns out to be on the order of $\delta$, not $\delta^2$.

 In the following sections we state the gradient bounds and moment bounds required to bound \eqref{eq:first_bounds}, and also describe how to handle the $\epsilon_1(x)$ and $\epsilon_2(x)$ terms there.

\subsection{Moment Bounds and Gradient Bounds}
\label{sec:momentbounds}
We now state several lemmas that establish necessary moment and gradient bounds for
showing that \eqref{eq:first_bounds} is small.  We recall that $\zeta < 0$.
\begin{lemma} \label{lem:moment_bounds}
For all $n \geq 1, \lambda > 0$, and $\mu > 0$ satisfying $0 < R < n $,
\begin{align}
&\EE \Big[(\tilde X(\infty))^2 1(\tilde X(\infty) \leq -\zeta)\Big] \leq \frac{4}{3} + \frac{2\delta^2}{3}, \label{eq:xsquaredelta}\\
&\EE \Big[ \big|\tilde X(\infty) 1(\tilde X(\infty) \leq -\zeta)\big| \Big] \leq \sqrt{\frac{4}{3} + \frac{2\delta^2}{3} }, \label{eq:xminusdelta}\\
&\EE \Big[ \big|\tilde X(\infty) 1(\tilde X(\infty) \leq -\zeta)\big| \Big] \leq 2\abs{\zeta} \label{eq:xminuszeta}\\
&\EE \Big[\big|\tilde X(\infty)1(\tilde X(\infty) \geq -\zeta)\big| \Big] \leq \frac{1}{\abs{\zeta}} + \frac{\delta^2}{4\abs{\zeta}} + \frac{\delta}{2}, \label{eq:xplus}\\
&\Prob(\tilde X(\infty) \leq -\zeta) \leq (2+\delta)\abs{\zeta}. \label{eq:idle_prob}
\end{align}
\end{lemma}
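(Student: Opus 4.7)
The plan is to derive all five inequalities from stationary generator identities of the form $\EE[G_X V(X(\infty))] = 0$ (valid by Lemma~\ref{lem:gz} whenever $V$ is dominated by a cubic), combined with the explicit stationary law of the Erlang-C birth-death chain given by detailed balance: $\pi_k = \pi_0 R^k/k!$ for $0 \le k \le n$ and $\pi_{n+j} = \pi_n \rho^j$ with $\rho = R/n$ for $j \ge 0$. Throughout I write $p_0 = \Prob(\tilde X(\infty) \le -\zeta)$ and $A = \{\tilde X(\infty) \le -\zeta\}$, which is exactly the event $\{X(\infty) \le n\}$.

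Taking $V(k) = k$ gives $\EE[d(X(\infty))] = \lambda$, i.e.\ $\EE[b(\tilde X(\infty))] = 0$. Plugging in the piecewise form $b(x) = -\mu x$ on $A$, $b(x) = \mu\zeta$ on $A^c$ yields the clean identity $\EE[\tilde X(\infty) 1_A] = \zeta(1-p_0)$. Inequality \eqref{eq:xminuszeta} now follows by splitting $|\tilde X(\infty)| 1_A = (-\tilde X(\infty)) 1_A + 2\tilde X(\infty)^+ 1_A$ and bounding $\tilde X(\infty)^+ 1_A \le |\zeta| 1_A$: the sum is at most $|\zeta|(1-p_0) + 2|\zeta|p_0 = |\zeta|(1+p_0) \le 2|\zeta|$. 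For \eqref{eq:xsquaredelta} I take $V(k) = k(k-1)/2$, which gives $\EE[d(X)X] = \lambda(\EE[X] + 1)$; converting to $\tilde X$ and absorbing the previous identity collapses this to $\EE[\tilde X^2 1_A] = 1 + \zeta M - \zeta^2(1-p_0)$ with $M = \EE[\tilde X(\infty)]$. A one-line check shows $M \ge 0$ (since $\tilde X(\infty) > -\zeta > 0$ on $A^c$ forces $\EE[\tilde X(\infty) 1_{A^c}] \ge |\zeta|(1-p_0) = -\EE[\tilde X(\infty) 1_A]$), so the right side is at most $1$, comfortably below $4/3 + 2\delta^2/3$. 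Inequality \eqref{eq:xminusdelta} is then a direct Cauchy--Schwarz consequence of \eqref{eq:xsquaredelta}.

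For \eqref{eq:xplus} I exploit the geometric tail. Feeding $V(k) = (n-k)^+$ (equivalently $V(k) = k \wedge n$) into the generator produces the boundary identity $\Prob(X(\infty) = n) = \delta|\zeta|(1-p_0)$, and summing the upper tail in closed form via $\pi_{n+j} = \pi_n \rho^j$, together with the algebraic simplification $\delta/(1-\rho) = 1/|\zeta| + \delta$, yields the clean factorization $\EE[|\tilde X(\infty)| 1(\tilde X(\infty) \ge -\zeta)] = \Prob(X(\infty) \ge n)\,(|\zeta| + 1/|\zeta|)$. It then suffices to bound the right side by $1/|\zeta| + \delta^2/(4|\zeta|) + \delta/2$: the $1/|\zeta|$ piece is covered by $\Prob(X(\infty) \ge n) \le 1$, and for the $|\zeta|\,\Prob(X(\infty) \ge n)$ piece I will use Chebyshev's inequality $\Prob(\tilde X(\infty) \ge -\zeta) \le \EE[\tilde X(\infty)^2]/\zeta^2$, controlling $\EE[\tilde X(\infty)^2 1_A] \le 1$ from the previous step and $\EE[\tilde X(\infty)^2 1_{A^c}]$ by an analogous closed-form summation on the tail.

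The main obstacle is \eqref{eq:idle_prob}. The two identities above determine $p_0$ only implicitly, via $p_0 = \delta|\zeta|/(\delta|\zeta| + B(n,R))$ where $B(n,R)$ is the Erlang-B blocking probability, so the task reduces to a lower bound on $B(n,R)$. My plan is a targeted Lyapunov estimate: pick $V(k)$ quadratic on $\{0,\dots,n\}$ and constant on $\{k \ge n\}$ (for instance $V(k) = (n-k)^+(n-k+1)^+/2$), tuned so that $G_X V$ is bounded below by a positive multiple of $1_A(k)$ away from the single boundary state $k = n$, while the boundary contribution at $k = n$ is computable exactly from $\Prob(X(\infty) = n) = \delta|\zeta|(1-p_0)$; the stationary identity $\EE[G_X V(X(\infty))] = 0$ then converts this into an inequality of the form $c_1 p_0 \le c_2 |\zeta|(1-p_0) + c_3 |\zeta|$, which rearranges to \eqref{eq:idle_prob} once the coefficients are tracked. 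Getting $c_1, c_2, c_3$ to assemble into the precise constant $2+\delta$ will be the finicky part, but the explicit boundary identity makes the accounting ultimately mechanical.
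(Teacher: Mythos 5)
The paper does not give a proof of this lemma at all: it is cited verbatim as Lemma~2 of \cite{BravDaiFeng2015}, so there is no in-paper proof to compare against. Evaluating your proposal on its own merits: the core idea (derive stationary identities from $\EE[G_{\tilde X}V(\tilde X(\infty))]=0$) is exactly the style used elsewhere in this paper and in the cited one, and your treatment of \eqref{eq:xsquaredelta}, \eqref{eq:xminusdelta}, \eqref{eq:xminuszeta} is correct. Your identity $\EE[\tilde X^2 1_A]=1+\zeta M-\zeta^2(1-p_0)$ checks out, the sign argument $M\ge 0$ is right, and you in fact get the sharper bound $\EE[\tilde X^2 1_A]\le 1$. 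The factorization $\EE[|\tilde X|1(\tilde X\ge-\zeta)]=\Prob(X\ge n)(|\zeta|+1/|\zeta|)$ in your attack on \eqref{eq:xplus} is also correct.

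However, the last two items have genuine gaps. For \eqref{eq:xplus}, after reducing to bounding $|\zeta|\Prob(X\ge n)$ by $\delta^2/(4|\zeta|)+\delta/2$, the proposed Chebyshev step $\Prob(\tilde X\ge-\zeta)\le\EE[\tilde X^2]/\zeta^2$ is circular and cannot work: carrying out the "analogous closed-form summation on the tail" for $\EE[\tilde X^2 1_{A^c}]$ produces a term $\zeta^2\Prob(X\ge n)$ (the leading term of $\sum_{j\ge0}\delta^2(n+j-R)^2\pi_n\rho^j$), which exactly cancels the $\zeta^2$ in the Chebyshev denominator, leaving you with nothing better than $\Prob(X\ge n)\le 1+(\text{positive})\Prob(X\ge n)$. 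Concretely, with $R=1,n=2$ one has $\EE[\tilde X^2]=7/3$ while you would need $\EE[\tilde X^2]\le\delta^2/4+\delta|\zeta|/2=3/4$, so the needed inequality is simply false. A different handle on $\Prob(X\ge n)$ is required; Chebyshev on the full second moment is dead on arrival.

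For \eqref{eq:idle_prob}, the proposed Lyapunov function $V(k)=(n-k)^+((n-k)^++1)/2$ does not have the structure you claim. Computing, $G_X V(k)=1(k\le n)\big[(n-k)(\mu k-\lambda)+\mu k\big]$, which is $+\mu n$ at $k=n$ but $-\lambda n$ at $k=0$; it is strongly negative for $k$ well below $R$, so it is not bounded below by a positive multiple of $1_A$ away from the boundary state. The stationary identity you get (which, after rescaling, is $\EE[(\zeta+\tilde X)\tilde X\,1_A]=p_0+\delta\zeta(1-p_0)$) therefore does not immediately rearrange to a one-sided bound on $p_0$; the integrand changes sign on $A$ and the resulting cancellation has to be controlled. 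You flag this yourself ("the finicky part"), but as written there is no argument that closes the gap, and the most natural crude bound $(\zeta+\tilde X)\tilde X\ge-\zeta^2/4$ leads to an estimate on $1-p_0$ that is far too weak to recover $(2+\delta)|\zeta|$. So items \eqref{eq:xplus} and \eqref{eq:idle_prob} remain unproven in your proposal.
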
 
Lemma~\ref{lem:moment_bounds} is just a restatement of Lemma~2 from \cite{BravDaiFeng2015}. 
The moment bounds above are not sufficient for us, and the following lemma states some extra moment bounds that are not proved in \cite{BravDaiFeng2015}.
\begin{lemma}
\label{lem:xtramom}
 For all $n \geq 1, \lambda > 0$, and $\mu > 0$ satisfying $0 < R < n $,
\begin{align}
\EE \Big[(\tilde X(\infty))^2 1(\tilde X(\infty) \leq -\zeta) \Big] \leq&\  \big( 5 + \delta (1+\delta/2) \big) \zeta^2 + (2+\delta)\abs{\zeta} \label{eq:xsquarezeta} \\
\EE \Big[(\tilde X(\infty))^2 1(\tilde X(\infty) \geq -\zeta) \Big] \leq&\  \delta^2 +8 + \frac{4}{\abs{\zeta}} \Big( \frac{1}{\abs{\zeta}} + \frac{\delta^2}{4\abs{\zeta}} + \frac{\delta}{2}\Big) +\frac{2(2\delta + \delta^3)}{3\abs{\zeta}}. \label{eq:xsquareplus}
\end{align}
Furthermore, let $\{\pi_k\}_{k=0}^{\infty}$ be the distribution of $X(\infty)$. Then
\begin{align}
\pi_0 \leq&\ 4(2+\delta)\delta^2\abs{\zeta}, \quad \text{ when } \abs{\zeta} \leq 1, \label{eq:pi0}
\end{align}
and 
\begin{align}
\pi_n \leq \delta \abs{\zeta}. \label{eq:pin}
\end{align}
\end{lemma}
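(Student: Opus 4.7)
My plan is to treat the four bounds separately: the two probability bounds $\pi_0$ and $\pi_n$ follow from detailed balance for the Erlang-C birth-death chain, while the two moment bounds follow from applying the stationarity identity $\EE[G_{\tilde X} f(\tilde X(\infty))] = 0$ of Lemma~\ref{lem:gz} to well-chosen test functions together with the moment bounds in Lemma~\ref{lem:moment_bounds}. Throughout I rely on the product-form expressions $\pi_{n+j} = \rho^j \pi_n$ for $j \ge 0$ (with $\rho = R/n$) and $\pi_k = \pi_0 R^k/k!$ for $0 \le k \le n$.

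For \eqref{eq:pin}, summing the geometric tail gives $\pi_n/(1-\rho) = \Prob(X(\infty)\ge n) \le 1$, so $\pi_n \le 1-\rho = (n-R)/n$; since $R \le n$, this is at most $(n-R)/R = \delta|\zeta|$. For \eqref{eq:pi0}, the detailed-balance identity $\pi_0 = \pi_n \cdot n!/R^n$ combined with \eqref{eq:pin} gives $\pi_0 \le \delta|\zeta|\cdot n!/R^n$, so it is enough to show $n!/R^n \le 4(2+\delta)\delta$ under the assumption $|\zeta|\le 1$. This assumption forces $n \le R + \sqrt{R}$ and hence $n/R \le 1+\delta$, so Robbins' non-asymptotic Stirling bound $n! \le e\sqrt{n}(n/e)^n$ combined with $(1+\delta)^n \le e^{n\delta}$ yields $n!/R^n \le e\sqrt{n}\,e^{-n(1-\delta)}$, which is exponentially small in $\sqrt{R}$ and easily bounded by $4(2+\delta)\delta$ for $R$ not too small; the remaining boundary cases of small $R$ can be verified by direct calculation.

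For the moment bound \eqref{eq:xsquarezeta}, I apply Lemma~\ref{lem:gz} to a test function whose generator contains $\tilde X^2 1(\tilde X \le -\zeta)$ as its principal part; a natural candidate is $f(x) = \tfrac12 ((x+\zeta)^-)^2$, whose discrete generator $G_{\tilde X} f$ can be computed explicitly in terms of $b$, $a$, and a small boundary correction at the state $x = -\zeta$. Combining the stationarity identity with the pointwise decomposition
\begin{align*}
\tilde X(\infty)^2 1(\tilde X(\infty) \le -\zeta) = \zeta^2 1(\tilde X(\infty) \le -\zeta) + ((\tilde X(\infty)+\zeta)^-)^2 - 2|\zeta|(\tilde X(\infty)+\zeta)^-,
\end{align*}
the identity $\EE (\tilde X(\infty)+\zeta)^- = |\zeta|$ obtained from $\EE b(\tilde X(\infty)) = 0$ (which is Lemma~\ref{lem:gz} with $f(x) = x$), the probability bound \eqref{eq:idle_prob}, and the tail bound \eqref{eq:xplus}, produces a bound of the target form $(5 + \delta(1+\delta/2))\zeta^2 + (2+\delta)|\zeta|$.

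For \eqref{eq:xsquareplus}, the geometric tail of $\pi$ allows a direct closed-form computation:
\begin{align*}
\EE[\tilde X(\infty)^2 1(\tilde X(\infty) \ge -\zeta)] = \delta^2 \pi_n \left[\frac{m^2}{1-\rho} + \frac{2m\rho}{(1-\rho)^2} + \frac{\rho(1+\rho)}{(1-\rho)^3}\right],
\end{align*}
where $m = n-R = |\zeta|/\delta$. Using $\pi_n/(1-\rho) = \Prob(X(\infty)\ge n)$, $R\delta^2 = 1$, and $1-\rho = m/n$, the three summands reduce respectively to $\zeta^2 \Prob(X(\infty)\ge n)$, $2\rho \Prob(X(\infty) \ge n)$, and $(1 + 1/\rho)\Prob(X(\infty) \ge n)/\zeta^2$. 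A parallel geometric computation gives the identity $\EE[\tilde X(\infty) 1(\tilde X(\infty) \ge -\zeta)] = \Prob(X(\infty)\ge n)(|\zeta| + 1/|\zeta|)$, which combined with \eqref{eq:xplus} provides a sharp enough bound on $\Prob(X(\infty) \ge n)$ to control each summand and deliver \eqref{eq:xsquareplus}. The main obstacle throughout is the $\pi_0$ bound: the Stirling-type estimate of $n!/R^n$ in the near-critical regime $|\zeta|\le 1$ is delicate because explicit non-asymptotic constants are required and small-$R$ boundary cases must be handled separately from the main asymptotic argument.
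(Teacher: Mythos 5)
Your treatment of \eqref{eq:pin} is exactly the paper's argument, and your route to \eqref{eq:xsquarezeta} --- apply Lemma~\ref{lem:gz} to $((x+\zeta)^-)^2$, use the identity $\EE (\tilde X(\infty)+\zeta)^- = \abs{\zeta}$ and the probability bound \eqref{eq:idle_prob} --- is also essentially the paper's (the paper finishes with \eqref{eq:xminuszeta}, the bound on the event $\{\tilde X(\infty)\le-\zeta\}$, rather than \eqref{eq:xplus}, which concerns the wrong side of $-\zeta$; your decomposition largely makes that step unnecessary anyway). For \eqref{eq:pi0} the paper avoids Stirling altogether: since $\pi_0\le\pi_1\le\cdots\le\pi_{\lfloor R\rfloor}$, \eqref{eq:idle_prob} gives $\pi_0\lfloor R\rfloor\le(2+\delta)\abs{\zeta}$, which yields the claim for $n\ge 4$, and $n<4$ is handled by $\pi_0\le(2+\delta)\abs{\zeta}=(2+\delta)R\delta^2\abs{\zeta}\le 4(2+\delta)\delta^2\abs{\zeta}$. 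Your $n!/R^n$ estimate can probably be pushed through, but it is strictly harder than necessary, and you correctly flag it as the delicate point of your write-up.

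The genuine gap is in \eqref{eq:xsquareplus}. Your geometric-tail computation is sound in substance (the middle summand equals $2\Prob(X(\infty)\ge n)$, not $2(R/n)\Prob(X(\infty)\ge n)$, but that is harmless), and it reduces the claim to showing $\zeta^2\Prob(X(\infty)\ge n)\le C$ for an absolute constant $C$. Your proposed control of this quantity --- the identity $\EE[\tilde X(\infty)1(\tilde X(\infty)\ge-\zeta)]=\Prob(X(\infty)\ge n)(\abs{\zeta}+1/\abs{\zeta})$ combined with \eqref{eq:xplus} --- only yields $\zeta^2\Prob(X(\infty)\ge n)\le\abs{\zeta}\big(\frac{1}{\abs{\zeta}}+\frac{\delta^2}{4\abs{\zeta}}+\frac{\delta}{2}\big)=1+\frac{\delta^2}{4}+\frac{\delta\abs{\zeta}}{2}$, and $\delta\abs{\zeta}=(n-R)/R$ is unbounded over the admissible parameters (take $n$ large with $R$ fixed). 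The additive $\delta/2$ in \eqref{eq:xplus} is simply too crude to capture the super-polynomial decay of the delay probability in $\abs{\zeta}$, so the leading summand of your expansion is not controlled and the stated bound does not follow. The paper sidesteps this issue entirely: it applies Lemma~\ref{lem:gz} to the cubic test function $f(x)=x^3 1(x\ge a+\delta)$ with $a=\delta(\lfloor R\rfloor-R)$, whose generator has the strongly negative term $-3\mu\abs{\zeta}x^2$ on $\{x\ge-\zeta\}$; this gives $\frac{3}{2}\mu\abs{\zeta}\,\EE[(\tilde X(\infty))^2 1(\tilde X(\infty)\ge-\zeta)]\le 6\mu\,\EE\abs{\tilde X(\infty)}+(\text{small terms})$, and dividing by $\frac{3}{2}\mu\abs{\zeta}$ and invoking $\EE[\abs{\tilde X(\infty)}1(\tilde X(\infty)\le-\zeta)]\le 2\abs{\zeta}$ from \eqref{eq:xminuszeta} produces the constant $8$ without ever bounding $\zeta^2\Prob(X(\infty)\ge n)$ in isolation. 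To close your argument you would need either this Lyapunov step or an independent, genuinely sharper tail estimate for $\Prob(X(\infty)\ge n)$.
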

This lemma is proved in Appendix~\ref{app:moment}. The next lemma combines the moment bounds above with some elementary algebra to bound some terms that will appear frequently in the proof of Theorem~\ref{thm:w2} when we bound \eqref{eq:first_bounds}. The proof is provided in Appendix~\ref{app:moment}.
\begin{lemma}
\label{lem:lastbounds}
For all $n \geq 1, \lambda > 0$, and $\mu > 0$ satisfying $1 \leq R < n $,
\begin{align}
\Big(1+\frac{1}{\abs{\zeta}}\Big)\EE \Big[\big|\tilde X(\infty) 1(\tilde X(\infty) \leq -\zeta) \big| \Big] \leq&\ \sqrt{2} + 2, \label{eq:lb1}\\
\Big(1+\frac{1}{\abs{\zeta}}\Big)\EE \Big[(\tilde X(\infty))^2  1(\tilde X(\infty) \leq -\zeta) \Big] \leq&\ 9,\label{eq:lb2}\\
\abs{\zeta}\Prob(\tilde X(\infty) \geq -\zeta) \leq&\ 2,\label{eq:lb3}\\
\zeta^2\Prob(\tilde X(\infty) \geq -\zeta) \leq&\ 20. \label{eq:lb4}
\end{align}  
\end{lemma}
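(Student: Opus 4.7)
The plan is to prove all four inequalities by combining the moment bounds in Lemmas~\ref{lem:moment_bounds} and~\ref{lem:xtramom}, systematically exploiting $\delta\le 1$ (which follows from $R\ge 1$) and the presence of two flavors of moment bounds: ones uniformly bounded in $\abs{\zeta}$ and ones scaling polynomially in $\abs{\zeta}$. The explicit factors $(1+1/\abs{\zeta})$, $\abs{\zeta}$, and $\zeta^2$ appearing in \eqref{eq:lb1}--\eqref{eq:lb4} are designed to cancel the $\abs{\zeta}$-dependence of these moment bounds, provided the $\abs{\zeta}$-small and $\abs{\zeta}$-large regimes are each handled with the appropriate bound.

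For \eqref{eq:lb1} I would write
\begin{align*}
\Big(1+\frac{1}{\abs{\zeta}}\Big)\EE\big[\abs{\tilde X(\infty)}1(\tilde X(\infty)\le -\zeta)\big]
= \EE\big[\abs{\tilde X(\infty)}1(\tilde X(\infty)\le -\zeta)\big] + \frac{1}{\abs{\zeta}}\EE\big[\abs{\tilde X(\infty)}1(\tilde X(\infty)\le -\zeta)\big],
\end{align*}
bound the first summand by \eqref{eq:xminusdelta} together with $\delta\le 1$ to obtain $\sqrt{2}$, and bound the second summand by \eqref{eq:xminuszeta} so that the $\abs{\zeta}$-factor cancels and yields $2$; the total is $\sqrt{2}+2$. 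For \eqref{eq:lb2} I would use the same split, but now \eqref{eq:xsquarezeta} is quadratic in $\abs{\zeta}$, so dividing by $\abs{\zeta}$ still leaves a term linear in $\abs{\zeta}$. I would thus case-split at a suitable threshold $t\in(0,1)$: on $\{\abs{\zeta}\ge t\}$ apply the uniform bound \eqref{eq:xsquaredelta} to get $(1+1/\abs{\zeta})\cdot 2\le 2(1+1/t)$, and on $\{\abs{\zeta}<t\}$ apply \eqref{eq:xsquarezeta}, using $\zeta^2\le t\abs{\zeta}$ and $\abs{\zeta}\le t$ to control the quadratic growth. With $\delta\le 1$ and $t$ chosen around $0.4$, both regimes stay comfortably below $9$.

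For the probability estimates \eqref{eq:lb3} and \eqref{eq:lb4}, the key observation is that $\zeta<0$, so on $\{\tilde X(\infty)\ge -\zeta\}$ we have $\tilde X(\infty)\ge\abs{\zeta}>0$, and in particular $\abs{\tilde X(\infty)}^k\ge\abs{\zeta}^k$ for $k=1,2$. Taking expectations yields
\begin{align*}
\abs{\zeta}^k\,\Prob(\tilde X(\infty)\ge -\zeta)\le \EE\big[\abs{\tilde X(\infty)}^k\,1(\tilde X(\infty)\ge -\zeta)\big],\qquad k=1,2,
\end{align*}
and I would then apply \eqref{eq:xplus} for $k=1$ and \eqref{eq:xsquareplus} for $k=2$. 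Both right-hand sides decay in $1/\abs{\zeta}$, so for $\abs{\zeta}\ge 1$ combined with $\delta\le 1$ they immediately give constants strictly below $2$ and $20$, respectively. In the opposite regime $\abs{\zeta}<1$ the desired inequalities are trivial since $\abs{\zeta}^k\,\Prob(\cdot)<1$ for $k=1,2$.

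The main obstacle is the case analysis needed for \eqref{eq:lb2}: the quadratic growth of \eqref{eq:xsquarezeta} in $\abs{\zeta}$ cannot be absorbed by the clean split used for \eqref{eq:lb1}, and one must genuinely use both the uniform bound \eqref{eq:xsquaredelta} and the $\zeta^2$-bound \eqref{eq:xsquarezeta}, interpolating between them at an explicit threshold. All remaining calculations are routine applications of the moment bounds established in the two preceding lemmas.
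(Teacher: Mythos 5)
Your proposal is correct and follows essentially the same route as the paper's proof: for \eqref{eq:lb1} and \eqref{eq:lb2} you split the prefactor $(1+1/\abs{\zeta})$, feed the $\abs{\zeta}$-uniform moment bounds \eqref{eq:xminusdelta}, \eqref{eq:xsquaredelta} to the $1\cdot$ piece and the $\abs{\zeta}$-scaled bounds \eqref{eq:xminuszeta}, \eqref{eq:xsquarezeta} to the $1/\abs{\zeta}$ piece (with a threshold case-split for \eqref{eq:lb2}, where the paper uses $\abs{\zeta}=1/2$ and your $t\approx 0.4$ works equally well); for \eqref{eq:lb3}--\eqref{eq:lb4} you use the same Markov-type observation that $\abs{\zeta}^k\Prob(\tilde X(\infty)\ge-\zeta)\le\EE[\abs{\tilde X(\infty)}^k 1(\tilde X(\infty)\ge-\zeta)]$ combined with \eqref{eq:xplus}, \eqref{eq:xsquareplus} and a case-split at $\abs{\zeta}=1$, exactly as in the paper.
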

Next we present the gradient bounds, which are proved in Appendix~\ref{app:gradbounds}.
\begin{lemma} \label{lem:gb}
Recall the Poisson equation \eqref{eq:poisson} and that the family of solutions to this equation is given by \eqref{eq:poissonsolution}. In particular, this family is parametrized by constants $a_1, a_2 \in \R$. For $ h(x) \in W_2$ with $h(0) = 0$, let $f_h(x)$ be a solution to the Poisson equation. Then $f_h(x)$ is twice continuously differentiable, with an absolutely continuous second derivative. Moreover, there exists a constant $C > 0$ independent of $\lambda, n$, and $\mu$, such that for all solutions with $a_2 = 0$ and for all $n \geq 1, \lambda > 0$, and $\mu > 0$ satisfying $1 \leq R < n $,
\begin{align}
\abs{f_h'(x)} 
\leq&\
\begin{cases}
\frac{C}{\mu }\Big(1 +  \frac{1}{\abs{\zeta}}\Big), \quad x \leq -\zeta,\\
\frac{C}{\mu \abs{\zeta}}\Big(x + 1 + \frac{1}{\abs{\zeta}}\Big), \quad x \geq -\zeta,
\end{cases}\label{eq:WCder1} \\
\abs{f_h''(x)} \leq&\ 
\begin{cases}
\frac{C}{\mu }\Big(1 +  \frac{1}{\abs{\zeta}}\Big), \quad x \leq -\zeta,\\
\frac{C}{\mu \abs{\zeta}}, \quad x \geq -\zeta,
\end{cases} \label{eq:WCder2}
\end{align}
and 
\begin{align}
 \abs{f_h'''(x)} \leq&\ 
\begin{cases}
\frac{C}{\mu }\Big(1 + \frac{1}{\abs{\zeta}}\Big), \quad x \leq -\zeta,\\
\frac{C}{\mu }, \quad x > -\zeta,
\end{cases} \label{eq:WCder3}
\end{align}
where $f_h'''(x)$ is interpreted as the left derivative at the points $x = -1/\delta$ and $x = -\zeta$.
\end{lemma}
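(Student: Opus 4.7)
The plan is to exploit the explicit integral representation \eqref{eq:poissonsolution} of $f_h$ to bound $f_h'$ directly, and then to bootstrap up to $f_h''$ and $f_h'''$ using the Poisson equation itself and its formal derivative. Throughout I write $\tilde h(x) := \EE h(Y(\infty)) - h(x)$, which is Lipschitz with constant $1$, and use the normalization $\int_\R (2/a(y)) \tilde h(y) p(y)\,dy = 0$, which holds since $\nu(y) = \kappa p(y)/a(y)$ is the density of $Y(\infty)$. The regularity claim is immediate: $a$ and $b$ are piecewise linear with $a \geq \mu$, so the integrand in \eqref{eq:poissonsolution} is continuous in $u$, giving $f_h \in C^1(\R)$; the Poisson equation then gives $f_h''(x) = (2/a(x))(\tilde h(x) - b(x) f_h'(x))$, whose right-hand side is continuous, so $f_h'' \in C(\R)$, and a formal differentiation shows that $f_h'''$ has jumps only where $a'$ and $b'$ jump, namely at $x = -1/\delta$ and $x = -\zeta$.

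For the bound on $f_h'$, specializing \eqref{eq:poissonsolution} with $a_2 = 0$ gives
\[
f_h'(x) = \frac{1}{p(x)}\int_{-\infty}^x \frac{2}{a(y)}\tilde h(y) p(y)\,dy = -\frac{1}{p(x)}\int_{x}^{\infty} \frac{2}{a(y)}\tilde h(y) p(y)\,dy,
\]
the two forms being equivalent by the normalization identity. I would then split $\R$ into the three regions corresponding to the pieces of $a$ and $b$: on $(-\infty, -1/\delta]$ the drift is linear and the diffusion is constant, so $p$ is Gaussian; on $[-1/\delta, -\zeta]$ both $a$ and $b$ are affine; and on $[-\zeta, \infty)$ both are constant, so $p$ is a decaying exponential. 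On the right-most region, using the ``from-the-right'' form and the bound $|\tilde h(x+z)| \leq |\tilde h(x)| + z$, direct integration gives $|f_h'(x)| \leq C\bigl(|\tilde h(x)|/(\mu|\zeta|) + 1/(\mu\zeta^2)\bigr)$; combining this with $|\tilde h(x)| \leq |x| + \EE|Y(\infty)|$ and an $O(1/|\zeta|)$ bound on $\EE|Y(\infty)|$ (obtained by integrating $|y|\nu(y)$ explicitly region by region) produces the second case of \eqref{eq:WCder1}. On the two left regions I would use the ``from-the-left'' form; the Gaussian factor in $p(y)/p(x)$ supplies a mean-zero centered weight which, together with the normalization identity, keeps the integral uniformly bounded in $x$ and yields the first case of \eqref{eq:WCder1}.

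The bound on $f_h''$ then follows from the Poisson equation, since $|f_h''(x)| \leq (2/a(x))(|\tilde h(x)| + |b(x)||f_h'(x)|)$, $a \geq \mu$, and $|b(x)| \leq \mu(|x|+|\zeta|)$; plugging in \eqref{eq:WCder1} region by region gives \eqref{eq:WCder2}. For $f_h'''$, formally differentiating the Poisson equation yields
\[
f_h'''(x) = \frac{2}{a(x)}\Big[-h'(x) - \tfrac{1}{2}a'(x)f_h''(x) - b'(x) f_h'(x) - b(x)f_h''(x)\Big],
\]
and the hypothesis $h \in W_2$ enters critically here: because $h \in \lipone$, we have $|h'(x)| \leq 1$ almost everywhere. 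Combining this with $|a'|, |b'| \leq \mu$ and the bounds \eqref{eq:WCder1}--\eqref{eq:WCder2} already established, a case analysis across the three regions produces \eqref{eq:WCder3}.

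The hardest step is the bound on $f_h'$ in the middle region $[-1/\delta, -\zeta]$, where $p$ exhibits Gaussian-like but not exactly Gaussian behavior (due to the affine coefficient $a$) and where one must extract bounds that are uniform in $|\zeta|$, in particular as $|\zeta| \to 0$ in the heavy-traffic regime. The key maneuver is to use the normalization identity to recenter $\tilde h$ inside the integral and then compare $\int_{-\infty}^x p(y)/a(y)\,dy$ against tails of $\nu$; extracting the correct $1/|\zeta|$ scaling uniformly in $x$ and across both kink points $-1/\delta$ and $-\zeta$ is where most of the careful bookkeeping lies.
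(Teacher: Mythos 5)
Your plan for the regularity claim and for the bound on $f_h'$ matches the paper's: use the explicit integral representation with $a_2 = 0$, switch between the ``from-the-left'' and ``from-the-right'' forms via the normalization identity $\int (2/a)\tilde h\,p = 0$, and estimate region by region using the near-Gaussian/exponential decay of $p$ together with an $O(1 + 1/\abs{\zeta})$ bound on $\EE\abs{Y(\infty)}$. So far so good.

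The bootstrap you propose for $f_h''$, however, has a genuine gap, and the problem propagates to $f_h'''$. Writing $f_h''(x) = \frac{2}{a(x)}\big(\tilde h(x) - b(x)f_h'(x)\big)$ and applying the triangle inequality destroys exactly the cancellation that makes $f_h''$ bounded. Concretely, for $x \geq -\zeta$ you have $\abs{b(x)} = \mu\abs{\zeta}$ and $\abs{f_h'(x)} \lesssim \frac{1}{\mu\abs{\zeta}}(x + 1 + 1/\abs{\zeta})$, so $\abs{b(x)f_h'(x)} \lesssim x + 1 + 1/\abs{\zeta}$; together with $\abs{\tilde h(x)} \lesssim \abs{x} + 1 + 1/\abs{\zeta}$ this yields a bound growing linearly in $x$, whereas the target \eqref{eq:WCder2} is the bounded $C/(\mu\abs{\zeta})$. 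The same failure occurs for $x \leq -\zeta$, where $\abs{b(x)}$ and $\abs{\tilde h(x)}$ both grow like $\abs{x}$ but their difference remains bounded. What the paper does instead is differentiate the Poisson equation to obtain an ODE for $f_h''$ whose forcing term involves $h'$ (bounded), $a'\tilde h/a^2$ (supported on the compact middle interval, since $a' = \mu\delta\,1(y \in (-1/\delta,-\zeta])$), and $r'f_h'$ (with $r' = 0$ for $y > -\zeta$); it then re-integrates this ODE, after verifying the boundary conditions $p\,f_h'' \to 0$ at $\pm\infty$, to obtain integral representations \eqref{eq:hf21}--\eqref{eq:hf22} for $f_h''$ in which the offending linear growth has been replaced by bounded or compactly supported data. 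That structural reduction --- $\tilde h$ replaced by $h'$, and the troublesome terms localized to the middle region --- is the step your proposal is missing, and it cannot be recovered from the undifferentiated Poisson equation alone.

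The $f_h'''$ step inherits the same difficulty: even granting \eqref{eq:WCder2}, the term $r(x)f_h''(x)$ (equivalently $\frac{2b(x)}{a(x)} f_h''(x)$) has $\abs{r(x)} = 2\abs{x}$ unbounded for $x \leq -1/\delta$, so estimating $\abs{r}\cdot\abs{f_h''}$ separately overshoots the target \eqref{eq:WCder3}, which is bounded in $x$. The paper handles this by multiplying the integral representation for $f_h''$ through by $r(x)$ \emph{before} taking absolute values and invoking the dedicated bounds \eqref{eq:fbound5}--\eqref{eq:fbound6} on $\frac{\abs{r(x)}}{p(x)}\int \frac{2}{a(y)} p(y)\,dy$; it is the extra Gaussian/exponential decay of $p$ that absorbs the linear growth of $r$. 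You would need an analogous maneuver rather than a direct case analysis applied to the differentiated Poisson equation.
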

\begin{remark}
In the proof of this lemma we will see that $f_h'''(x)$ is discontinuous at points $x = -1/\delta$, and $x = -\zeta$, see \eqref{eq:hf3}.
\end{remark}
The gradient bounds in Lemma~\ref{lem:gb} are not sufficient for us. To deal with the terms in \eqref{eq:first_bounds} involving $\epsilon_1(x)$ and $\epsilon_2(x)$, we require the following extra bounds, which are proved in Appendix~\ref{app:eterm}.
\begin{lemma}
\label{lem:eterm}
Fix $h(x) \in W_2$ with $h(0)=0$, and let $f_h(x)$ be a solution to the Poisson equation \eqref{eq:poisson} that satisfies the conditions of Lemma~\ref{lem:gb}. Consider only those $x \in \R$ such that $x = x_k = \delta (k - R)$ for some $k \in \Z_+$. Then there exists a constant $C>0$ independent of $\lambda, n$, and $\mu$, such that for all $n \geq 1, \lambda > 0$, and $\mu > 0$ satisfying $1 \leq R < n$,
\begin{align}
\abs{f_h'''(x-) - f_h'''(y)} \leq&\ \frac{C\delta}{\mu } \bigg[ 1(x \leq -\zeta)(1 + \abs{x})\Big(1 + \frac{1}{\abs{\zeta}}\Big)\notag \\
& \hspace{1cm} + 1(x \geq -\zeta+\delta) (1+ \abs{\zeta})\bigg], \quad  y \in (x-\delta, x) \label{eq:eboundleft}
\end{align}
and 
\begin{align}
&\ \abs{f_h'''(x-) - f_h'''(y)}  \notag \\
\leq&\ \frac{C\delta}{\mu } \bigg[ 1(x \leq -\zeta-\delta)(1 + \abs{x})\Big(1 + \frac{1}{\abs{\zeta}}\Big) + 1(x \geq -\zeta) (1+ \abs{\zeta}) \notag \\
& \hspace{1cm}+ \frac{1}{\delta}\Big(1 + \frac{1}{\abs{\zeta}}\Big)1(x\in \{-1/\delta, -\zeta\}) \bigg], \quad y \in (x, x + \delta). \label{eq:eboundright}
\end{align}
\end{lemma}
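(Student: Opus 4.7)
The plan is to reduce both inequalities to estimates of $\abs{f_h''''}$ on each piece of smoothness, augmented by explicit jump bounds at the two exceptional points $-1/\delta$ and $-\zeta$ where $f_h'''$ is discontinuous. Since $a(x)$ and $b(x)$ are piecewise linear with pieces separated by $\{-1/\delta,-\zeta\}$, I differentiate the Poisson equation \eqref{eq:poisson} twice on each piece (where $a''=b''=0$) to obtain
\begin{align*}
\tfrac{1}{2}a(x)\, f_h''''(x) = -h''(x) - a'(x)\, f_h'''(x) - 2b'(x)\, f_h''(x) - b(x)\, f_h'''(x).
\end{align*}
Using $a(x)\geq \mu$, $\abs{h''(x)}\leq 1$ (since $h'\in\lipone$), $\abs{a'(x)}\leq \mu\delta$, $\abs{b'(x)}\leq \mu$, together with the explicit forms of $a,b$ and the gradient bounds from Lemma~\ref{lem:gb}, I will show $\abs{f_h''''(x)} \leq \frac{C}{\mu}(1+\abs{x})(1+\frac{1}{\abs{\zeta}})$ on the left region $x<-\zeta$ and $\abs{f_h''''(x)} \leq \frac{C}{\mu}(1+\abs{\zeta})$ on the right region $x>-\zeta$. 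The right-region bound crucially uses that $a'=b'=0$ there, which kills the $\abs{x}$-dependence that the general formula would otherwise produce through the $b(x)f_h'''(x)$ term.

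To handle the jumps of $f_h'''$ at $x^*\in\{-1/\delta,-\zeta\}$, I differentiate \eqref{eq:poisson} once and take left/right limits at $x^*$. Since $a,b,f_h',f_h''$ are continuous, only the jumps of $a'$ and $b'$ contribute, yielding
\begin{align*}
f_h'''(x^*+) - f_h'''(x^*-) = \frac{2}{a(x^*)}\Big[-\tfrac{1}{2}\Delta a'(x^*)\, f_h''(x^*) - \Delta b'(x^*)\, f_h'(x^*)\Big],
\end{align*}
where $\Delta a'$ and $\Delta b'$ denote the jumps of $a'$ and $b'$ at $x^*$. At $x^*=-1/\delta$ only $a'$ jumps (by $\mu\delta$); at $x^*=-\zeta$, $a'$ jumps by $-\mu\delta$ and $b'$ jumps by $\mu$, with the latter producing the dominant contribution via $f_h'(-\zeta)$. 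Plugging in Lemma~\ref{lem:gb} then gives $\abs{f_h'''(x^*+) - f_h'''(x^*-)} \leq \frac{C}{\mu}(1+\frac{1}{\abs{\zeta}})$ at both points.

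For $y\in(x-\delta, x)$, the only discontinuities of $f_h'''$ lie at the grid points $-1/\delta$ and $-\zeta$, neither of which is interior to $(x-\delta, x)$, so $f_h'''(x-) - f_h'''(y) = \int_y^x f_h''''(t)\,dt$; bounding the integrand by the region-specific estimates above and the length by $\delta$ yields \eqref{eq:eboundleft} with the two cases $x\leq-\zeta$ and $x\geq-\zeta+\delta$ arising from which piece contains the interval. For $y\in(x,x+\delta)$ with $x\notin\{-1/\delta,-\zeta\}$, the same integration argument gives the first two terms of \eqref{eq:eboundright}. When $x\in\{-1/\delta,-\zeta\}$, I split
\begin{align*}
\abs{f_h'''(x-) - f_h'''(y)} \leq \abs{f_h'''(x-) - f_h'''(x+)} + \abs{f_h'''(x+) - f_h'''(y)},
\end{align*}
bound the jump by the previous step (which matches the $\frac{C\delta}{\mu}\cdot\frac{1}{\delta}(1+\frac{1}{\abs{\zeta}})$ term in the lemma after absorbing the $\delta$ prefactor) and the remaining integral by the region-specific estimate on $\abs{f_h''''}$ in the piece immediately to the right of $x$. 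The main obstacle will be the careful bookkeeping across the three pieces, with the subtlest case being the jump at $x^*=-\zeta$: the $\Delta b'$ contribution dominates and forces the $\frac{1}{\mu}(1+\frac{1}{\abs{\zeta}})$ scaling, which is precisely what produces the $\frac{1}{\delta}$ factor appearing in the lemma's jump term.
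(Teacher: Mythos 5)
Your proposal is correct and it proves the lemma, but it does so by a genuinely different route than the paper. The paper works directly from the representation \eqref{eq:hf3} of $f_h'''$ as a combination of $r',r,1/a,a'/a^2$ with $f_h',f_h'',h,h'$, and expands the increment $f_h'''(x-)-f_h'''(y)$ via a product-rule decomposition \eqref{eq:lipf} into eight terms; Lemmas~\ref{lem:llminus} and \ref{lem:llplus} then bound each term individually, using the fact that each factor is either Lipschitz on the relevant interval or uniformly bounded via Lemma~\ref{lem:gb}. You instead go one derivative higher: you differentiate the Poisson equation twice on each piece where $a''=b''=0$ to obtain a clean formula for $f_h''''$, bound $\abs{f_h''''}$ a.e.\ on each piece with the same gradient bounds, integrate, and deal with the discontinuities of $f_h'''$ at $\{-1/\delta,-\zeta\}$ by computing the jumps explicitly from the one-sided limits of the once-differentiated Poisson equation. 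What your approach buys is a tighter conceptual organization: one piecewise fourth-derivative bound plus one explicit jump formula, rather than the paper's eight separate increment bounds. What the paper's approach buys is that it never needs to speak of $f_h''''$ or of $h''$ as pointwise objects — it keeps all derivatives one order lower by pushing the Lipschitz argument into the product-rule split. Your argument does implicitly invoke $h''$, which exists only a.e.\ because $h'\in\lipone$; this is fine as long as you observe that $f_h'''$ is absolutely continuous on each smooth piece (each factor in \eqref{eq:hf3} is Lipschitz there), so $f_h'''(x-)-f_h'''(y)=\int_y^x f_h''''(t)\,dt$ holds with the integrand defined a.e.\ and bounded by $\lVert h''\rVert_\infty\le 1$. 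Your jump formula $\Delta f_h'''(x^*)=-\tfrac{2}{a(x^*)}\big[\tfrac{1}{2}\Delta a'(x^*)f_h''(x^*)+\Delta b'(x^*)f_h'(x^*)\big]$ is correct, and your observation that the $\Delta b'=\mu$ jump at $-\zeta$ produces the $\frac{1}{\mu}(1+\tfrac{1}{\abs{\zeta}})$ contribution (hence the $\tfrac{1}{\delta}$ prefactor after factoring out $\tfrac{C\delta}{\mu}$) is exactly the right bookkeeping.
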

\begin{remark}
The upper bound in \eqref{eq:eboundright} has an extra term compared to the bound in \eqref{eq:eboundleft}. This terms is the result of the discontinuity of $f_h'''(x)$ at $x = -1/\delta$ and $x = -\zeta$.
\end{remark}
\section{Proof of Theorem~\ref{thm:w2} }
\label{sec:proofW}
In this section we prove Theorem~\ref{thm:w2}. Fix $h(x) \in {W_2}$ with $h(0) = 0$, and recall that the family of solutions to the Poisson equation is given by \eqref{eq:poissonsolution}. For the remainder of Section~\ref{sec:proofW}, we fix one such solution $f_h(x)$ with $a_2 = 0$. We will now bound \eqref{eq:first_bounds}, which we recall here as
\begin{align}
\Big| \EE h(\tilde X(\infty)) - \EE h(Y(\infty)) \Big| \leq&\ \frac{1}{6} \delta^2 \EE \Big[ \big|f_h'''(\tilde X(\infty)-)b(\tilde X(\infty)) \big| \Big] + \lambda \EE\Big[ \big| \epsilon_1(\tilde X(\infty))\big|\Big] \notag \\
&+ \lambda \EE\Big[ \big| \epsilon_2(\tilde X(\infty))\big|\Big] +  \frac{1}{\delta}  \EE\Big[ \big|b(\tilde X(\infty)) \epsilon_2(\tilde X(\infty))\big|\Big], \label{eq:second_bounds}
\end{align}
where
\begin{align*}
\epsilon_1(x) =&\ \frac{1}{2} \int_x^{x+\delta} (x+\delta -y)^2(f_h'''(y)-f_h'''(x-))dy, \\
\epsilon_2(x) =&\  -\frac{1}{2} \int_{x-\delta}^{x} (y-(x-\delta))^2(f_h'''(y)-f_h'''(x-))dy.
\end{align*}
\begin{proof}[Proof of Theorem~\ref{thm:w2}]
Throughout the proof we assume that $R \geq 1$, or equivalently, $\delta \leq 1$. We will use $C > 0$ to denote a generic constant that may change from line to line, but does not depend of $\lambda,n$, and $\mu$. Suppose we know that for some positive constants $c_1, \ldots, c_4 > 0$ independent of $\lambda, n$, and $\mu$, 
\begin{align}
&\ \Big| \EE h(\tilde X(\infty)) - \EE h(Y(\infty)) \Big| \leq  \delta^2 (c_1 + c_2  +c_3+ \delta c_4)+ C\delta(\pi_0+\pi_n), \label{inl:thm}
\end{align}
where $\{\pi_k\}_{k=0}^{\infty}$ is the distribution of $X(\infty)$. Then to prove the theorem we would only need to show that 
\begin{align*}
 \pi_0, \pi_n \leq C \delta.
\end{align*}
One way to prove this is to appeal to Theorem~3 of \cite{BravDaiFeng2015}, which states that the Kolmogorov distance 
\begin{align*}
d_K(\tilde X(\infty), Y_0(\infty)) \equiv \sup_{a \in \R} \big| \Prob( \tilde X(\infty) \leq a) - \Prob( Y_0(\infty) \leq a) \big| \leq 188 \delta
\end{align*}
for all $n \geq 1$ and $1 \leq R < n$, where $Y_0(\infty)$ is the random variable with density $\eta(x)$ defined in \eqref{eq:eta}. We would then have that 
\begin{align}
\pi_n =&\ \Prob( -\zeta - \delta/2 \leq \tilde X(\infty) \leq  -\zeta + \delta/2) \notag \\
=&\ \Prob( -\zeta - \delta/2 \leq Y_0(\infty) \leq  -\zeta + \delta/2)  \notag \\
&+ \Prob( -\zeta - \delta/2 \leq \tilde X(\infty) \leq  -\zeta + \delta/2) - \Prob( -\zeta - \delta/2 \leq Y_0(\infty) \leq  -\zeta + \delta/2)  \notag  \\
\leq&\ \delta \norm{\eta} + 2d_K(\tilde X(\infty), Y_0(\infty)) \leq \delta C, \label{eq:pikolmbound}
\end{align}
where in the last inequality we apply Lemma~9 of \cite{BravDaiFeng2015}, which states that $\eta(x)$ is always bounded by $\sqrt{2/\pi}$. The same argument can be used to bound $\pi_0$. 

To conclude the theorem it remains to verify \eqref{inl:thm}, which we do by bounding each of the terms on the right side of \eqref{eq:second_bounds} individually. We recall here that the support of $\tilde X(\infty)$ is a $\delta$-spaced grid, and in particular this grid contains the points $-1/\delta$ and $-\zeta$. In the bounds that follow, we will often consider separately the cases where $\tilde X(\infty) \leq -\zeta$, and $\tilde X(\infty) \geq -\zeta+\delta$. We recall that 
\begin{align*}
b(x) = \mu \big[(x+\zeta)^- + \zeta\big],
\end{align*}
and apply the gradient bound \eqref{eq:WCder3} together with \eqref{eq:lb1} and \eqref{eq:lb3} of Lemma~\ref{lem:lastbounds} to see that
\begin{align*}
\EE \Big[ \big|f_h'''(\tilde X(\infty)-)b(\tilde X(\infty)) \big| \Big] \leq &\ C\Big(1 + \frac{1}{\abs{\zeta}}\Big)\EE \Big[\big|\tilde X(\infty) 1(\tilde X(\infty) \leq -\zeta) \big| \Big] + C\abs{\zeta}\Prob(\tilde X(\infty) \geq -\zeta+\delta) \\
\leq&\ C(\sqrt{2} + 2) + 2C \equiv c_1.
\end{align*}
To bound the next term, we use \eqref{eq:eboundright} from Lemma~\ref{lem:eterm} to see that
\begin{align*}
\lambda \EE\Big[ \big| \epsilon_1(\tilde X(\infty))\big|\Big] \leq&\ \frac{\mu }{2} \EE \bigg[\int_{\tilde X(\infty)}^{\tilde X(\infty)+\delta} \abs{f_h'''(\tilde X(\infty)-)-  f_h'''(y)} dy \bigg]\\
\leq&\ C\delta^2 \EE \bigg[ 1(\tilde X(\infty) \leq -\zeta -\delta)\Big(1 + \big|\tilde X(\infty))\big|\Big)\Big(1 + \frac{1}{\abs{\zeta}}\Big)  \\
&\hspace{1cm}+ 1(\tilde X(\infty) \geq -\zeta) (1+ \abs{\zeta})+ \frac{1}{\delta}\Big(1 + \frac{1}{\abs{\zeta}}\Big)1(\tilde X(\infty)\in \{-1/\delta, -\zeta\})\bigg]\\
\leq&\ C\delta^2 \bigg[\EE \Big[ \big|\tilde X(\infty)1(\tilde X(\infty) \leq -\zeta -\delta) \big|\Big]\Big(1 + \frac{1}{\abs{\zeta}}\Big)  \\
&\hspace{1cm} + \Prob(\tilde X(\infty) \leq -\zeta -\delta)\Big(1 + \frac{1}{\abs{\zeta}}\Big) \\
&\hspace{1cm} + \Prob(\tilde X(\infty) \geq -\zeta) (1+ \abs{\zeta})+\frac{1}{\delta}\Big(1 + \frac{1}{\abs{\zeta}}\Big)(\pi_0 + \pi_n)\bigg],
\end{align*}
where in the last inequality we used the fact that $\Prob(\tilde X(\infty)=-1/\delta)$ and $\Prob(\tilde X(\infty)=-\zeta)$ equal $\pi_0$ and $\pi_n$, respectively. We first use \eqref{eq:idle_prob}, \eqref{eq:lb1}, and \eqref{eq:lb3} to see that
\begin{align*}
\lambda \EE\Big[ \big| \epsilon_1(\tilde X(\infty))\big|\Big] \leq&\ C\delta^2 \Big((\sqrt{2} + 2) + (1 + 3) +(1 + 2)\Big)+ C\delta\Big(1 + \frac{1}{\abs{\zeta}}\Big)(\pi_0+\pi_n) \\ 
\leq&\ C\delta^2 + C\delta(\pi_0+\pi_n) + \frac{C\delta}{\abs{\zeta}} (\pi_0+\pi_n)\\
=&\ C\delta^2 + C\delta(\pi_0+\pi_n) + \frac{C\delta}{\abs{\zeta}} \pi_0 1(\abs{\zeta} \geq 1) + \frac{C\delta}{\abs{\zeta}} \pi_0 1(\abs{\zeta} \leq 1) + \frac{C\delta}{\abs{\zeta}} \pi_n \\
\leq&\ C\delta^2 + C\delta(\pi_0+\pi_n) + \frac{C\delta}{\abs{\zeta}} \pi_0 1(\abs{\zeta} \leq 1) + \frac{C\delta}{\abs{\zeta}} \pi_n.
\end{align*}
Next, we apply the bounds on $\pi_0$ and $\pi_n$ from \eqref{eq:pi0} and \eqref{eq:pin} to conclude that
\begin{align*}
\lambda \EE\Big[ \big| \epsilon_1(\tilde X(\infty))\big|\Big] \leq&\  C\delta^2 + C\delta(\pi_0+\pi_n) \equiv c_2\delta^2 + C\delta(\pi_0+\pi_n).
\end{align*}
We move on to bound the next term in \eqref{eq:second_bounds}. Using \eqref{eq:eboundleft} from Lemma~\ref{lem:eterm},
\begin{align*}
\lambda \EE\Big[ \big| \epsilon_2(\tilde X(\infty))\big|\Big] \leq&\ \frac{\mu }{2} \EE \bigg[\int_{\tilde X(\infty)-\delta}^{\tilde X(\infty)} \abs{f_h'''(\tilde X(\infty)-)-  f_h'''(y)} dy \bigg]\\
\leq&\ C\delta^2 \EE\Big[ 1(\tilde X(\infty) \leq -\zeta)\Big(1 + \big|\tilde X(\infty)\big|\Big)\Big(1+\frac{1}{\abs{\zeta}}\Big) + 1(\tilde X(\infty) \geq -\zeta+\delta) (1+ \abs{\zeta})\Big]\\
\leq&\ C\delta^2 \bigg[\Prob(\tilde X(\infty) \leq -\zeta)\Big(1+\frac{1}{\abs{\zeta}}\Big) + \EE\Big[\big|\tilde X(\infty)1(\tilde X(\infty) \leq -\zeta)\big| \Big]\Big(1+\frac{1}{\abs{\zeta}}\Big) \\
& \hspace{1.5cm}+ \Prob(\tilde X(\infty) \geq -\zeta+\delta) (1+ \abs{\zeta})\bigg].
\end{align*}
Now \eqref{eq:idle_prob}, \eqref{eq:lb1}, and \eqref{eq:lb3} imply that
\begin{align*}
\lambda \EE\Big[ \big| \epsilon_2(\tilde X(\infty))\big|\Big] \leq&\ C\delta^2 \big(4 + (\sqrt{2}+2)+ (1+2)\big) \equiv c_3 \delta^2.
\end{align*}
For the last term in \eqref{eq:second_bounds}, we use the form of $b(x)$ together with \eqref{eq:eboundleft} from Lemma~\ref{lem:eterm} to see that
\begin{align*}
\frac{1}{\delta} \EE\Big[ \big|b(\tilde X(\infty)) \epsilon_2(\tilde X(\infty))\big|\Big] \leq&\ \frac{\delta}{2} \EE \bigg[\big|b(\tilde X(\infty))\big|\int_{\tilde X(\infty)-\delta}^{\tilde X(\infty)} \abs{f_h'''(\tilde X(\infty)-)-  f_h'''(y)} dy \bigg]\\
\leq&\ C\delta^3 \bigg[ \EE \Big[\big|\tilde X(\infty)(1 + \big|\tilde X(\infty)\big|) 1(\tilde X(\infty) \leq -\zeta)\big|\Big] \Big(1+\frac{1}{\abs{\zeta}}\Big) \notag \\
& \hspace{1cm} + \Prob(\tilde X(\infty) \geq -\zeta+\delta)\abs{\zeta} (1+ \abs{\zeta})\bigg]\\
\leq&\ C\delta^3 \bigg[ \EE \Big[\big|\tilde X(\infty) 1(\tilde X(\infty) \leq -\zeta)\big|\Big]\Big(1+\frac{1}{\abs{\zeta}}\Big) \\
&\hspace{1cm}+ \EE \Big[\big(\tilde X(\infty)\big)^21(\tilde X(\infty) \leq -\zeta)\big|\Big] \Big(1+\frac{1}{\abs{\zeta}}\Big) \notag \\
& \hspace{1cm} + \Prob(\tilde X(\infty) \geq -\zeta+\delta)(\abs{\zeta}+\abs{\zeta}^2 )\bigg].
\end{align*}
We apply \eqref{eq:lb1}--\eqref{eq:lb4} from Lemma~\ref{lem:lastbounds} to conclude that
\begin{align*}
\frac{1}{\delta} \EE\Big[ \big|b(\tilde X(\infty)) \epsilon_2(\tilde X(\infty))\big|\Big] \leq&\ C\delta^3 \big( (\sqrt{2}+2) + 9 +2 + 20\big) \equiv c_4\delta^3.
\end{align*}
Therefore, we have shown that for all $R \geq 1$, and $h(x) \in {W_2}$ with $h(0)=0$, \eqref{inl:thm} holds, concluding the proof of Theorem~\ref{thm:w2}.
\end{proof}
%

\appendix

\section{Moment Bounds}
\label{app:moment}
In this section we prove Lemmas~\ref{lem:xtramom} and \ref{lem:lastbounds}.
\begin{proof}[Proof of Lemma~\ref{lem:xtramom}]
We first prove \eqref{eq:xsquarezeta}, or 
\begin{align*}
\EE \Big[(\tilde X(\infty))^2 1(\tilde X(\infty) \leq -\zeta) \Big] \leq  \big( 5 + \delta (1+\delta/2) \big) \zeta^2 + (2+\delta)\abs{\zeta}.
\end{align*}
Let $x$ be of the form $x = \delta(k - R)$, where $k \in \Z_+$. Applying $G_{\tilde X}$ to the function $f(x) = \big[ \delta(k-n)^-\big]^2 =  \big[ (x+\zeta)^-\big]^2$, and observing that $1(k \leq n) = 1(x \leq -\zeta)$,  we get
\begin{align*}
G_{\tilde X} f(x) =&\ \lambda 1(x \leq -\zeta - \delta)\big( 2\delta(x + \zeta) + \delta^2\big) + \mu (k \wedge n)1(x \leq -\zeta)\big( -2\delta(x + \zeta)  + \delta^2\big) \notag \\
=&\ \lambda 1(x \leq -\zeta)\big( 2\delta(x + \zeta) + \delta^2\big) + \mu k 1(x \leq -\zeta)\big( -2\delta(x + \zeta)  + \delta^2\big) - \delta^2\lambda 1(x = -\zeta) \notag \\
=&\ 1(x \leq -\zeta)\Big( 2\delta(x + \zeta)(\lambda - \mu k) + \delta^2(\lambda + \mu k)  \Big) - \delta^2\lambda 1(x = -\zeta)  \notag \\
=&\ 1(x \leq -\zeta)\Big( 2\delta(x + \zeta)(\lambda -  \mu n) + 2\mu \delta(x + \zeta)(n-k) + \delta^2(\lambda +\mu k)  \Big)  - \delta^2\lambda 1(x = -\zeta) \\
=&\ 1(x \leq -\zeta)\Big( 2\mu (x + \zeta)\zeta - 2\mu (x + \zeta)^2 + \delta^2(\lambda + \mu k)  \Big)  - \delta^2\lambda 1(x = -\zeta).
\end{align*}
Taking expected values on both sides and applying Lemma~\ref{lem:gz}, we see that 
\begin{align*}
&\ \EE \big[(\tilde X(\infty)+\zeta)^2 1(\tilde X(\infty) \leq -\zeta) \big]\\
\leq&\ \zeta \EE \big[(\tilde X(\infty)+\zeta) 1(\tilde X(\infty) \leq -\zeta) \big] +  \frac{\delta^2}{2\mu}\Prob(\tilde X(\infty) \leq -\zeta)( \lambda +  \mu n)\\
=&\ \zeta \EE \big[(\tilde X(\infty)+\zeta) 1(\tilde X(\infty) \leq -\zeta) \big] + \frac{1}{2}\Prob(\tilde X(\infty) \leq -\zeta)(1 + \delta^2 n).
\end{align*} 
To proceed, we rely on (B.5) from \cite{BravDaiFeng2015}, which tells us that $\EE \big[(\tilde X(\infty)+\zeta) 1(\tilde X(\infty) \leq -\zeta) \big] = \zeta$, and therefore,
\begin{align*}
\EE \big[(\tilde X(\infty)+\zeta)^2 1(\tilde X(\infty) \leq -\zeta) \big] \leq&\  \zeta^2 + \frac{1}{2}\Prob(\tilde X(\infty) \leq -\zeta)(1 + \delta^2 n)\\
\leq&\ \zeta^2 + \frac{2+\delta}{2} \abs{\zeta}(1 + \delta^2 n),
\end{align*}
where we used \eqref{eq:idle_prob} to get the last inequality. Since $\abs{\zeta} = \delta (n - \frac{\lambda}{\mu })$,
\begin{align*}
\delta^2 n  =  \delta^2 \frac{\abs{\zeta}}{\delta} + \delta^2 \frac{\lambda}{\mu } = \delta \abs{\zeta} + 1,
\end{align*}
and hence,
\begin{align*}
\EE \big[(\tilde X(\infty) + \zeta)^2 1(\tilde X(\infty) \leq -\zeta) \big] \leq&\  \zeta^2 + \frac{2+\delta}{2}\abs{\zeta}(2 + \delta \abs{\zeta}).
\end{align*}
By expanding the square inside the expected value on the left hand side and using \eqref{eq:xminuszeta}, we see that
\begin{align*}
\EE \big[(\tilde X(\infty))^2 1(\tilde X(\infty) \leq -\zeta) \big] \leq&\  \zeta^2 + \frac{2+\delta}{2}\abs{\zeta}(2 + \delta \abs{\zeta}) + 2\abs{\zeta}\EE\Big[ \big| \tilde X(\infty) 1(\tilde X(\infty) \leq -\zeta)\big| \Big]\\
\leq&\ 5\zeta^2 + \frac{2+\delta}{2}\abs{\zeta}(2 + \delta \abs{\zeta}) \\
=&\ \big( 5 + \delta (1+\delta/2) \big) \zeta^2 + (2+\delta)\abs{\zeta}.
\end{align*}
This proves \eqref{eq:xsquarezeta}. Now we prove \eqref{eq:xsquareplus}, or 
\begin{align*}
\EE \Big[(\tilde X(\infty))^2 1(\tilde X(\infty) \geq -\zeta) \Big] \leq \delta^2 +8 + \frac{4}{\abs{\zeta}} \Big( \frac{1}{\abs{\zeta}} + \frac{\delta^2}{4\abs{\zeta}} + \frac{\delta}{2}\Big) +\frac{2(2\delta + \delta^3)}{3\abs{\zeta}}.
\end{align*}
Let $x$ be of the form $x = \delta(k - R)$, where $k \in \Z_+$. 
Recall that 
\begin{align*}
b(x) = \mu \big[\zeta + (x+\zeta)^- \big] = \delta (\lambda - \mu(k \wedge n)).
\end{align*}
Set $a = \delta\big(\lfloor R \rfloor - R\big) < 0$, and consider the function $f(x) = x^3 1( x \geq a+\delta)$. Then 
\begin{align}
G_{\tilde X} f(x) =&\ \lambda 1( x \geq a+\delta) \big((x+\delta)^3 - x^3 \big) +\lambda 1( x = a) (x+\delta)^3  \notag \\
&+ \mu (k \wedge n)1( x > a+\delta) \big((x-\delta)^3 - x^3\big) + \mu (k \wedge n)1( x = a+\delta) ( - x^3) \notag \\
=&\ 1( x \geq a+\delta) \Big[ \lambda  \big((x+\delta)^3 - x^3 \big) + \mu (k \wedge n) \big((x-\delta)^3 - x^3\big)\Big] \notag \\
&+ \lambda 1( x = a) (x+\delta)^3 - \mu (k \wedge n)1( x = a+\delta) (x-\delta)^3. \label{eq:gfcube}
\end{align}
Suppose $x \geq a+\delta$. Using the fact that $1( x = a+\delta) = 1( k = \lfloor R \rfloor + 1)$, we see that
\begin{align}
G_{\tilde X} f(x) =&\ \lambda (3\delta x^2 + 3\delta^2 x + \delta^3) + \mu (k \wedge n) (-3\delta x^2 + 3\delta^2 x - \delta^3) - a^3 \mu (k \wedge n)1( x = a+\delta) \notag \\
=&\ 3\delta x^2(\lambda - \mu(k \wedge n)) + 3\delta^2 x(\lambda + \mu(k \wedge n)) + \delta^3(\lambda - \mu(k \wedge n))  \notag \\ 
& - a^3 \mu \big((\lfloor R \rfloor + 1) \wedge n\big) 1( x = a+\delta) \notag  \\
=&\ 3x^2b(x) + 3\delta^2 x\big(2\lambda - (\lambda - \mu(k \wedge n))\big) + \delta^2b(x) \notag \\
&- a^3 \mu \big((\lfloor R \rfloor + 1) \wedge n\big) 1( x = a+\delta) \notag  \\
=&\ 3x^2b(x) + 6\mu x - 3\delta xb(x) + \delta^2b(x)- a^3 \mu \big((\lfloor R \rfloor + 1) \wedge n\big) 1( x = a+\delta). \label{eq:gtildx}
\end{align}
When $x \in [a+\delta,  -\zeta)$ (which is the empty interval if $\lfloor R \rfloor + 1 = n$), then $b(x) = -\mu x$, and
\begin{align}
G_{\tilde X} f(x) =&\ -3\mu x^3 + 6\mu x + 3\delta \mu x^2 - \delta^2\mu x - a^3 \mu \big((\lfloor R \rfloor + 1) \wedge n\big) 1( x = a+\delta) \notag \\
\leq&\ -3\mu \big(x^3 -\delta x^2 + \frac{1}{3}\delta^2 x \big) + 6\mu x + \delta^3 \mu (\lfloor R \rfloor + 1) \notag  \\
\leq&\ -3\mu \big(x^3 -\delta x^2 + \frac{1}{3}\delta^2 x \big) + 6\mu x + \delta \mu  + \delta^3 \mu  \notag \\
\leq&\ 6\mu x+ \delta \mu  + \delta^3 \mu, \label{eq:gtildx1}
\end{align}
where in the first inequality we used the fact that $\abs{a} \leq \delta$, and in the last inequality we used the fact that $g(x) \equiv x^3 -\delta x^2 + \frac{1}{3}\delta^2 x \geq 0$ for all $x \geq 0$, which is true because $g(0) = 0$ and $g'(x) \geq 0$ for all $x \in \R$. Now when $x \geq -\zeta$, then $b(x) = -\mu \abs{\zeta}$, and using \eqref{eq:gtildx} we see that
\begin{align}
G_{\tilde X} f(x) =&\ -3x^2\mu \abs{\zeta} + 6\mu x + 3\delta x\mu \abs{\zeta} - \delta^2\mu \abs{\zeta} - a^3 \mu \big((\lfloor R \rfloor + 1) \wedge n\big) 1( x = a+\delta) \notag \\
\leq&\ -3\mu \abs{\zeta}\big(x^2-\delta x\big) + 6\mu x - a^3 \mu \big((\lfloor R \rfloor + 1) \wedge n\big) 1( x = a+\delta)  \notag \\
\leq&\ -3\mu \abs{\zeta}\big(x^2-\delta x\big) + 6\mu x + \delta \mu  + \delta^3 \mu  \notag \\
\leq&\ -3\mu \abs{\zeta}\big(\frac{1}{2}x^2-\frac{1}{2}\delta^2\big) + 6\mu x + \delta \mu  + \delta^3 \mu , \label{eq:gtildx2}
\end{align}
Combining \eqref{eq:gtildx1} and \eqref{eq:gtildx2} with \eqref{eq:gfcube}, we have just shown that 
\begin{align*}
G_{\tilde X} f(x) \leq&\ -\frac{3}{2}\mu \abs{\zeta}\big(x^2-\delta^2\big)1(x \geq -\zeta)+ 6\mu x 1( x \geq a+\delta) \\
&+ \delta \mu  + \delta^3 \mu + \lambda 1( x = a) (x+\delta)^3.
\end{align*}
Taking expected values on both sides above, and applying Lemma~\ref{lem:gz}, we see that 
\begin{align*}
\frac{3}{2}\mu \abs{\zeta} \EE\Big[ (\tilde X(\infty))^2 1(\tilde X(\infty) \geq -\zeta) \Big] \leq&\ \frac{3}{2}\mu \abs{\zeta}\delta^2 + 6\mu \EE \Big[\big| \tilde X(\infty) \big| \Big]+ \delta \mu  + \delta^3 \mu + \lambda (a+\delta)^3,
\end{align*}
and since $\lambda (a+\delta)^3 \leq \lambda \delta^3 = \mu \delta$, we have
\begin{align*}
\EE\Big[ (\tilde X(\infty))^2 1(\tilde X(\infty) \geq -\zeta) \Big] \leq&\ \delta^2 + \frac{4}{\abs{\zeta}} \EE \Big[\big| \tilde X(\infty) \big| \Big] +\frac{2(2\delta + \delta^3)}{3\abs{\zeta}}.
\end{align*}
Using the moment bounds in \eqref{eq:xminuszeta} and \eqref{eq:xplus}, we conclude that 
\begin{align*}
\EE\Big[ (\tilde X(\infty))^2 1(\tilde X(\infty) \geq -\zeta) \Big] \leq&\ \delta^2 +8 + \frac{4}{\abs{\zeta}} \Big( \frac{1}{\abs{\zeta}} + \frac{\delta^2}{4\abs{\zeta}} + \frac{\delta}{2}\Big) +\frac{2(2\delta + \delta^3)}{3\abs{\zeta}},
\end{align*}
which proves \eqref{eq:xsquareplus}.

We now prove \eqref{eq:pi0}. Recall that $\{\pi_k\}_{k=0}^{\infty}$ is the distribution of $X(\infty)$. From \eqref{eq:idle_prob} we know that 
\begin{align*}
\Prob(\tilde X(\infty) \leq -\zeta) = \Prob(X(\infty) \leq n) = \sum_{k=0}^{n} \pi_k  \leq (2+\delta)\abs{\zeta}.
\end{align*}
From the flow balance equations, one can see that $\pi_{\lfloor R \rfloor}$ maximizes $\{\pi_k\}_{k=0}^{\infty}$. Now when  $\abs{\zeta} \leq 1$,
\begin{align*}
\abs{\zeta} = \delta (n - R) = \frac{1}{\sqrt{R}}(n - R) \leq 1,
\end{align*}
which implies that 
\begin{align*}
R \geq n - \sqrt{R} \geq n - \sqrt{n},
\end{align*}
where in the last inequality we used $R < n$. We use this inequality together with the fact that $\pi_0 \leq \pi_1 \leq \ldots \leq \pi_{\lfloor R \rfloor}$, which can be verified from the flow balance equations, to see that
\begin{align*}
(2+\delta)\abs{\zeta} \geq \sum_{k=0}^{n} \pi_k \geq \sum_{k=0}^{\lfloor R \rfloor} \pi_k \geq \pi_{0}\lfloor R \rfloor \geq \pi_{0}\lfloor n-\sqrt{n} \rfloor.
\end{align*}
Hence, 
\begin{align*}
\pi_0 \leq \frac{n}{\lfloor n-\sqrt{n} \rfloor} \frac{(2+\delta)\abs{\zeta}}{n} \leq \frac{n}{n-\sqrt{n} - 1} \frac{(2+\delta)\abs{\zeta}}{R} \leq  \frac{4(2+\delta)\abs{\zeta}}{R} = 4(2+\delta)\delta^2 \abs{\zeta}, \quad n \geq 4.
\end{align*}
To conclude the proof of\eqref{eq:pi0} we need to verify the bound above holds for $n < 4$, but this is simple to do. Observe that
\begin{align*}
\pi_0 \leq P(X(\infty) \leq n) \leq (2+\delta)\abs{\zeta}  = (2+\delta)R\delta^2\abs{\zeta}  \leq (2+\delta)n \delta^2 \abs{\zeta} \leq 4(2+\delta)\delta^2\abs{\zeta}, \quad n < 4.
\end{align*}
This proves \eqref{eq:pi0}, and we move on to prove \eqref{eq:pin}. From the flow balance equations corresponding to the CTMC $X$, it is easy to see that 
\begin{align*}
\pi_n = \frac{\frac{R^n}{n!}}{\sum_{k=0}^{n-1} \frac{R^k}{k!} + \frac{R^n}{n!} \frac{1}{1-R/n}} \leq 1 - \frac{R}{n} = \frac{n-R}{n} = \frac{R}{n} \frac{n-R}{R} = \frac{R}{n}\delta \abs{\zeta} \leq \delta \abs{\zeta}.
\end{align*}
This concludes the proof of the lemma.
\end{proof}

\begin{proof}[Proof of Lemma~\ref{lem:lastbounds}]
We recall our assumption that $\delta \leq 1$. We begin by proving \eqref{eq:lb1}. Using the moment bounds in \eqref{eq:xminusdelta} and \eqref{eq:xminuszeta}, we see that
\begin{align*}
(1 + 1/\abs{\zeta})\EE \Big[\big|\tilde X(\infty)  1(\tilde X(\infty) \leq -\zeta) \big|\Big]\leq &\ (1 + 1/\abs{\zeta})\bigg(2\abs{\zeta} \wedge \sqrt{\frac{4}{3} + \frac{2\delta^2}{3} }\bigg) \\
\leq &\ \Big(\sqrt{\frac{4}{3} + \frac{2\delta^2}{3} } + 2\Big) \leq \big(\sqrt{2} + 2\big).
\end{align*}
Next we prove \eqref{eq:lb2}. Using the moment bounds in \eqref{eq:xsquaredelta} and \eqref{eq:xsquarezeta}, we see that
\begin{align*}
&\ (1 + 1/\abs{\zeta})\EE \Big[(\tilde X(\infty))^2  1(\tilde X(\infty) \leq -\zeta) \Big]\\
\leq &\ (1 + 1/\abs{\zeta})\bigg(\Big(\big( 5 + \delta (1+\delta/2) \big) \zeta^2 + (2+\delta)\abs{\zeta} \Big)\wedge \Big(\frac{4}{3} + \frac{2\delta^2}{3}\Big)\bigg) \\
\leq &\ 2 + \bigg(\big(6.5\abs{\zeta} +  3\big) \wedge \frac{2}{\abs{\zeta}}\bigg) \leq 2 + 7,
\end{align*}
where to get the last inequality we considered separately the cases when $\abs{\zeta} \leq 1/2$ and $\abs{\zeta} \geq 1/2$. To prove \eqref{eq:lb3}, we use the moment bound \eqref{eq:xplus} to get
\begin{align*}
\abs{\zeta} \Prob( \tilde X(\infty) \geq -\zeta) \leq&\ \abs{\zeta} \wedge \EE \Big[\big| \tilde X(\infty) 1(\tilde X(\infty) \geq -\zeta)\big| \Big]\notag \\
 \leq&\ \abs{\zeta} \wedge \Big(\frac{1}{\abs{\zeta}} + \frac{\delta^2}{4\abs{\zeta}} + \frac{\delta}{2} \Big) \notag \\
\leq&\ 2, 
\end{align*}
where to get the last inequality we considered separately the cases where $\abs{\zeta} \leq 1$ and $\abs{\zeta} \geq 1$. The proof of \eqref{eq:lb4} is similar. We use the moment bound \eqref{eq:xsquareplus} to see that 
\begin{align*}
\zeta^2 \Prob( \tilde X(\infty) \geq -\zeta) \leq&\ \zeta^2 \wedge \EE \Big[( \tilde X(\infty))^2 1(\tilde X(\infty) \geq -\zeta) \Big]\\
 \leq&\ \zeta^2 \wedge \Big(\delta^2 +8 + \frac{4}{\abs{\zeta}} \Big( \frac{1}{\abs{\zeta}} + \frac{\delta^2}{4\abs{\zeta}} + \frac{\delta}{2}\Big) +\frac{2(2\delta + \delta^3)}{3\abs{\zeta}} \Big)  \\
 \leq&\ \zeta^2 \wedge \Big(1 +8 + \frac{4}{\abs{\zeta}} \Big( \frac{1}{\abs{\zeta}} + \frac{1}{4\abs{\zeta}} + \frac{1}{2}\Big) +\frac{2}{\abs{\zeta}} \Big)  \\
\leq&\ 20,
\end{align*}
where to get the last inequality we considered separately the cases where $\abs{\zeta} \leq 1$ and $\abs{\zeta} \geq 1$.  This concludes the proof of Lemma~\ref{lem:lastbounds}.
\end{proof}

\section{Gradient Bounds}
\label{app:gradbounds}
In this section we prove the gradient bounds in Lemma~\ref{lem:gb}. The proofs of these lemmas follow the arguments used to prove \cite[Lemma 4]{BravDaiFeng2015}, except that here we use a state-dependent diffusion coefficient, whereas \cite{BravDaiFeng2015} have a constant diffusion coefficient. 
For $h(x) \in W_2$, we rewrite the Poisson equation \eqref{eq:poisson} in the more convenient form
\begin{align}
f_h''(x) + \frac{2b(x)}{a(x)}f_h'(x) = \frac{2}{a(x)}\big( \EE h(Y(\infty)) - h(x)\big), \quad x \in \R. \label{eq:sapoisson}
\end{align}
We now introduce some notation related to the Poisson equation and its solutions. Recalling the forms of $a(x)$ and $b(x)$ in \eqref{eq:adef} and \eqref{eq:bdef}, we define the ratio $r(x): \R \to \R$ as
\begin{align*}
r(x)= \frac{2b(x)}{a(x)} = \frac{2\big(\zeta + (x + \zeta)^-\big)}{1 + 1(x > -1/\delta)(1 - \delta(\zeta + (x + \zeta)^-))}, \quad x \in \R.
\end{align*}
Then
\begin{align}
r(x) = 
\begin{cases}
-2x, \quad x \leq -1/\delta,\\
\frac{-2x}{2 +\delta x}, \quad x \in [-1/\delta, -\zeta], \\
\frac{2\zeta}{2 + \delta\abs{\zeta}}, \quad x \geq -\zeta,
\end{cases}
\quad
r'(x) = 
\begin{cases}
-2, \quad x \leq -1/\delta,\\
\frac{-4}{(2+\delta x)^2}, \quad x \in (-1/\delta, -\zeta], \\
0, \quad x > -\zeta,
\end{cases} \label{eq:rform}
\end{align}
where $r'(x)$ is interpreted as the left derivative at the points $x = -1/\delta$ and $x = -\zeta$.
Recall that we defined $p(x)$ in \eqref{eq:pold}. We can see that it satisfies 
\begin{align}
p(x) = 
\begin{cases}
\exp\big(\frac{1}{\delta^2}+ \frac{2}{\delta^2} - \frac{4}{\delta^2} \log(2)\big)\exp(-x^2), \quad x \leq -1/\delta,\\
\exp\big(- \frac{4}{\delta^2} \log(2)\big)\exp\Big[\frac{4}{\delta^2}\log(2 + \delta x) - \frac{2\delta x}{\delta^2}\Big], \quad x \in [-1/\delta, -\zeta], \\
\exp\big(- \frac{4}{\delta^2} \log(2) + \frac{2}{\delta^2}(2\log(2 + \delta \abs{\zeta}) - \delta \abs{\zeta} ) + \frac{2\zeta^2}{2 + \delta \abs{\zeta}} \big)\exp\big(\frac{-2\abs{\zeta} x }{2+\delta \abs{\zeta}}\big), \quad x \geq -\zeta.
\end{cases}\label{eq:pdef}
\end{align}
The following two lemmas will be key to proving Lemma~\ref{lem:gb}. They are proved at the end of this section.

\begin{lemma}\label{lem:handle}
Fix $ h(x) \in W_2$ with $h(0) = 0$, and recall that the family of solutions to the Poisson equation \eqref{eq:sapoisson} is given in \eqref{eq:poissonsolution}. Choose one solution  $f_h(x)$ with parameter $a_2 = 0$. Then
\begin{align}
f_h'(x) \leq&\ \frac{1}{p(x)} \int_{-\infty}^{x} \frac{2}{a(y)}(\EE \big|Y(\infty)\big|  + \abs{y} \big) p(y) dy, \label{eq:hf11} \\
f_h'(x) \leq&\ \frac{1}{p(x)} \int_{x}^{\infty} \frac{2}{a(y)}( \EE \big|Y(\infty)\big|  + \abs{y}\big) p(y) dy, \label{eq:hf12} \\
f_h'''(x) =&\ -r'(x) f_h'(x) - r(x) f_h''(x) - \frac{2}{a(x)}h'(x) - \frac{2a'(x)}{a^2(x)}\big( \EE h(Y(\infty)) - h(x)\big). \label{eq:hf3}
\end{align}
Furthermore, if we assume that 
\begin{align}
\lim_{x \to -\infty}  p(x) f_h''(x) = 0, \label{eq:mlimits}
\end{align}
then 
\begin{align}
f_h''(x) =&\ \frac{1}{p(x)} \int_{-\infty}^{x} \Big(\frac{2}{a(y)} h'(y) - \frac{2a'(y)}{a^2(y)}\big[\EE h(Y(\infty)) - h(y) \big] - r'(y) f_h'(y)\Big)p(y) dy, \label{eq:hf21}
\end{align}
and similarly, if 
\begin{align}
\lim_{x \to \infty}  p(x) f_h''(x) = 0, \label{eq:plimits}
\end{align}
then 
\begin{align}
f_h''(x) =&\  -\frac{1}{p(x)} \int_{x}^{\infty} \Big(\frac{2}{a(y)} h'(y) - \frac{2a'(y)}{a^2(y)}\big[\EE h(Y(\infty)) - h(y) \big] - r'(y) f_h'(y)\Big)p(y) dy. \label{eq:hf22}
\end{align}

\end{lemma}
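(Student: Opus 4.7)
The plan is to extract each of the five claims from the Poisson equation \eqref{eq:sapoisson} and the explicit representation \eqref{eq:poissonsolution} of its solutions. Fixing $a_2=0$, differentiation of \eqref{eq:poissonsolution} immediately yields
\begin{align*}
f_h'(x) = \frac{1}{p(x)} \int_{-\infty}^{x} \frac{2}{a(y)}\big(\EE h(Y(\infty)) - h(y)\big)p(y)\, dy,
\end{align*}
which is the starting point for everything that follows.

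For \eqref{eq:hf11} I would apply the triangle inequality inside the integral, using that $h(0)=0$ and $h \in W_2$ implies $\abs{h(y)} \leq \abs{y}$, together with $\abs{\EE h(Y(\infty))} \leq \EE\abs{Y(\infty)}$. For \eqref{eq:hf12}, the key observation is that $\nu(y) = \kappa p(y)/a(y)$ is the density of $Y(\infty)$, so
\begin{align*}
\int_{-\infty}^{\infty} \frac{2 p(y)}{a(y)}\big(\EE h(Y(\infty)) - h(y)\big)\, dy = \frac{2}{\kappa}\EE\big[\EE h(Y(\infty)) - h(Y(\infty))\big] = 0.
\end{align*}
Hence the integral from $-\infty$ to $x$ equals the negative of the integral from $x$ to $\infty$, and the same triangle inequality yields \eqref{eq:hf12}.

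For \eqref{eq:hf3} I would rewrite the Poisson equation as $f_h''(x) = \tfrac{2}{a(x)}(\EE h(Y(\infty)) - h(x)) - r(x) f_h'(x)$ and differentiate both sides. Since $a$ and $r$ are piecewise smooth with corners only at $x=-1/\delta$ and $x=-\zeta$ (see \eqref{eq:rform}), differentiation is valid classically off the corners and in the left-derivative sense at the corners; the resulting expression for $f_h'''(x)$ is exactly \eqref{eq:hf3}. For \eqref{eq:hf21} and \eqref{eq:hf22} the strategy is to multiply \eqref{eq:hf3} by $p(x)$ and recognize, via $p'=rp$, that the $r(x) f_h''(x)$ term cancels against $p'(x) f_h''(x)$ to produce an exact derivative:
\begin{align*}
\big(p(x) f_h''(x)\big)' = -p(x) r'(x) f_h'(x) - \frac{2 p(x)}{a(x)} h'(x) - \frac{2 p(x) a'(x)}{a^2(x)}\big(\EE h(Y(\infty)) - h(x)\big).
\end{align*}
Integrating from $-\infty$ to $x$ and using $\lim_{x\to-\infty} p(x) f_h''(x) = 0$ produces \eqref{eq:hf21}; integrating from $x$ to $\infty$ under the alternate boundary condition produces \eqref{eq:hf22}.

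The hard part will be the careful bookkeeping at the two kink points $x=-1/\delta$ and $x=-\zeta$, where $a'$, $r'$, and $f_h'''$ all have jumps. Since these points form a measure-zero set, the integrals in \eqref{eq:hf21}--\eqref{eq:hf22} are unaffected, but the left-derivative convention must be invoked consistently when differentiating across them. A secondary technical point is to check that \eqref{eq:poissonsolution} produces an absolutely continuous $f_h''$, which is needed to invoke the fundamental theorem of calculus in the integrated form; this follows by inspection of the integrand, which is bounded and piecewise continuous.
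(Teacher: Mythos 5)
Your proposal follows the paper's proof line for line: the explicit formula for $f_h'$ from \eqref{eq:poissonsolution}, the vanishing-integral trick via the density $\nu \propto p/a$, differentiation of the Poisson equation for \eqref{eq:hf3}, and the $p'=rp$ cancellation followed by integration under the boundary conditions for \eqref{eq:hf21}--\eqref{eq:hf22}. One small point worth flagging: both your derivation and the paper's own proof body produce a term $-\frac{2}{a(y)}h'(y)$ inside the integrals of \eqref{eq:hf21} and \eqref{eq:hf22}, whereas the lemma as stated writes $+\frac{2}{a(y)}h'(y)$; this appears to be a sign typo in the lemma statement itself (it is inconsequential downstream since only $|f_h''|$ is used), not an error in your argument.
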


\begin{lemma} \label{lem:lowlevelbounds}
\allowdisplaybreaks
Recall that $a(x)$, $r(x)$, and $p(x)$  are given by  \eqref{eq:adef}, \eqref{eq:rform}, and \eqref{eq:pdef}, respectively. The following bounds hold:
\begin{align}
&\frac{1}{p(x)} \int_{-\infty}^{x} \frac{2}{a(y)}p(y) dy \leq 
\begin{cases}
\frac{\sqrt{\pi}}{\mu }, \quad x \leq -1/\delta, \label{eq:fbound1} \\
\frac{\sqrt{\pi}}{\mu } + \frac{\sqrt{2\pi}}{\mu }, \quad x \in [-1/\delta, 0],\\
\frac{1}{\mu }e^{\frac{\zeta^2}{2}}(\sqrt{\pi} + \sqrt{2\pi} + \abs{\zeta}), \quad x \in [0,-\zeta],
\end{cases}\\
&\frac{1}{p(x)} \int_{x}^{\infty} \frac{2}{a(y)}p(y) dy \leq 
\begin{cases}
\frac{1}{\mu } \Big( \frac{1}{\abs{\zeta}} + e^{\delta/12} + 1 \Big), \quad x \in [0,-\zeta], \\
\frac{1}{\mu \abs{\zeta}}, \quad x \geq -\zeta,
\end{cases} \label{eq:fbound2} \\
&\frac{1}{p(x)} \int_{-\infty}^{x} \frac{2\abs{y}}{a(y)}p(y) dy \leq 
\begin{cases}
\frac{1}{\mu }, \quad x \leq 0,\\
\frac{2}{\mu } e^{\frac{\zeta^2}{2}}, \quad x \in [0,-\zeta],
\end{cases} \label{eq:fbound3} \\
&\frac{1}{p(x)} \int_{x}^{\infty} \frac{2\abs{y}}{a(y)}p(y) dy \leq 
\begin{cases}
\frac{1}{\mu} + \frac{1}{\mu \zeta^2} + \frac{\delta}{2\mu \abs{\zeta}}, \quad x \in [0,-\zeta],\\
\frac{x}{\mu \abs{\zeta} } + \frac{1}{\mu \zeta^2} + \frac{\delta}{2\mu \abs{\zeta}}, \quad x \geq -\zeta,
\end{cases} \label{eq:fbound4} \\
&\frac{\abs{r(x)}}{p(x)}\int_{-\infty}^{x}\frac{2}{a(y)} p(y)dy \leq 2/\mu, \quad x \leq 0, \label{eq:fbound5} \\
& \frac{\abs{r(x)}}{p(x)}\int_{x}^{\infty}\frac{2}{a(y)} p(y)dy \leq 2/\mu, \quad x \geq 0, \label{eq:fbound6}\\
&\EE \big|Y(\infty)\big| \leq \sqrt{\delta^2 + 2} + \sqrt{2\delta^2 + 4} + \frac{2+\delta^2}{\abs{\zeta}} + \delta. \label{eq:fbound7}
\end{align}
\end{lemma}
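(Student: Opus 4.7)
The plan is a case-by-case verification driven by the piecewise structure of $a$, $r$, and $p$. In each of the three regions---$y\le -1/\delta$ (where $p$ is Gaussian with $a\equiv\mu$), $y\in(-1/\delta,-\zeta]$ (where $p$ has the gamma-type profile $(2+\delta y)^{4/\delta^2}e^{-2y/\delta}$ and $a(y)=\mu(2+\delta y)$), and $y\ge-\zeta$ (where $p$ is exponential with rate $2\abs{\zeta}/(2+\delta\abs{\zeta})$ and $a$ is the constant $\mu(2+\delta\abs{\zeta})$)---I would substitute the closed form for $p(y)/p(x)$ and $a(y)$ into the integrand, split the domain at the regime boundaries $-1/\delta$, $0$, and $-\zeta$, and either compute the integrals exactly or control them by standard tail estimates.

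For \eqref{eq:fbound1}--\eqref{eq:fbound4}, the Gaussian regime is handled by the elementary bound $\int_{-\infty}^x e^{-y^2}\,dy\le \sqrt{\pi}\,e^{-x^2}$ valid for $x\le 0$. In the middle regime the key observation is that $(2+\delta y)^{4/\delta^2}e^{-2y/\delta}$ is strictly maximized at $y=0$ (seen by differentiating its logarithm) and is monotone on each side, so a second-order Taylor bound $\log(1+\delta\abs{\zeta}/2)\ge \delta\abs{\zeta}/2 - \delta^2\zeta^2/8$ gives $p(-\zeta)\ge e^{-\zeta^2/2}$; this is precisely where the $e^{\zeta^2/2}$ factor in \eqref{eq:fbound1} and \eqref{eq:fbound3} originates. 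In the exponential regime the integrals are computed exactly: $\int_x^\infty y e^{-c(y-x)}\,dy=1/c^2+x/c$ for $x\ge-\zeta$, which with $c=2\abs{\zeta}/(2+\delta\abs{\zeta})$ produces the $1/(\mu\abs{\zeta})$ term in \eqref{eq:fbound2} and all four summands in the stated bound of \eqref{eq:fbound4}.

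For the Mills-type bounds \eqref{eq:fbound5}--\eqref{eq:fbound6}, the estimates \eqref{eq:fbound1}--\eqref{eq:fbound2} are too loose because $\abs{r(x)}$ grows like $\abs{x}$ in the left tail. Instead I would use the sharper Mills inequality $\int_{-\infty}^x e^{-y^2}\,dy\le e^{-x^2}/(2\abs{x})$ for $x\le -1/\delta$, which produces the exact cancellation $\abs{r(x)}\cdot\text{(integral)}\le 2\abs{x}\cdot\bigl(1/(\mu\abs{x})\bigr)=2/\mu$. For the right tail $x\ge-\zeta$ the cancellation is immediate from the exact exponential computation, since $\abs{r(x)}\cdot 1/(\mu\abs{\zeta})=2/\bigl(\mu(2+\delta\abs{\zeta})\bigr)\le 2/\mu$, and the remaining transitional cases $x\in[-1/\delta,0]$ and $x\in[0,-\zeta]$ require the same cancellation in the middle regime, proved by a direct ratio estimate for $p(y)/p(x)$ that exploits the monotonicity of $r$.

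Finally, for \eqref{eq:fbound7} I would combine the explicit density $\nu(x)=\kappa p(x)/a(x)$ with the Lyapunov identity $\EE G_Y f(Y(\infty))=0$. Taking $f(x)=x$ yields $\EE[(Y(\infty)+\zeta)^-]=\abs{\zeta}$, which controls the left-tail moment, while $f(x)=x^2/2$ produces a bound on $\EE[(Y(\infty))^2 1(Y(\infty)\le-\zeta)]$ that combines with Cauchy--Schwarz to yield the $\sqrt{\delta^2+2}+\sqrt{2\delta^2+4}$ contribution. The right tail is a direct computation against the shifted-exponential distribution, with conditional mean $\abs{\zeta}+1/\abs{\zeta}+\delta/2$, accounting for the $(2+\delta^2)/\abs{\zeta}+\delta$ part. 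I expect the main obstacle to be the middle-regime analysis underlying \eqref{eq:fbound5}--\eqref{eq:fbound6}, where neither the density nor the ratio $p(y)/p(x)$ admits a clean closed form and the cancellation between $\abs{r(x)}$ and the corresponding integral must be established by a delicate monotonicity argument on the gamma-type integrand.
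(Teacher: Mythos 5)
Your plan for \eqref{eq:fbound1}--\eqref{eq:fbound6} mirrors the paper's proof: split at $-1/\delta$, $0$, and $-\zeta$; use a Mills-ratio bound on the Gaussian tail; use a second-order Taylor expansion of $\log(2+\delta y)$ in the middle (gamma-type) regime and monotonicity of the resulting profile; compute the right-tail integrals exactly against the shifted exponential; and handle \eqref{eq:fbound5}--\eqref{eq:fbound6} by the cancellation trick of weighting the integrand by $\abs{y}/\abs{x}$ before integrating exactly. The one cosmetic slip is that $\int_{-\infty}^x e^{-y^2}dy \le \sqrt{\pi}\,e^{-x^2}$ is a factor of $2$ looser than the bound $e^{x^2}\int_{-\infty}^{x}e^{-y^2}dy\le\sqrt{\pi}/2$ actually needed to reproduce the literal constants in \eqref{eq:fbound1} and \eqref{eq:fbound3}; this is immaterial downstream since Lemma~\ref{lem:gb} and Lemma~\ref{lem:eterm} only track a generic $C$.

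The plan for \eqref{eq:fbound7} has a genuine gap. With $f(x)=x^2/2$ the identity $\EE G_Y f(Y(\infty))=0$ reads
\begin{equation*}
\mu\,\EE\!\big[Y^2\,1(Y<-\zeta)\big] \;+\; \mu\abs{\zeta}\,\EE\!\big[Y\,1(Y\ge-\zeta)\big] \;=\; \tfrac12\,\EE\big[a(Y(\infty))\big],
\end{equation*}
and since $a(x)$ reaches $\mu(2+\delta\abs{\zeta})$ on $[\,-\zeta,\infty)$, the right side is of order $\mu\delta\abs{\zeta}$ when $\abs{\zeta}$ is large. This does not bound $\EE[Y^2\,1(Y<-\zeta)]$ uniformly in $\zeta$, so it cannot on its own produce the $\sqrt{\delta^2+2}+\sqrt{2\delta^2+4}$ contribution. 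Similarly, the exponential-tail conditional mean $\abs{\zeta}+1/\abs{\zeta}+\delta/2$ is unbounded in $\abs{\zeta}$; recovering the stated $(2+\delta^2)/\abs{\zeta}+\delta$ requires the factor $\Prob(Y(\infty)\ge-\zeta)$, which your plan does not control. The paper resolves both issues at once with a Foster--Lyapunov inequality $G_Y V(x)\le -f(x)+g(x)$ for $V(x)=x^2$: it decomposes the drift so that the potentially large positive term $\mu\delta\abs{\zeta}$ coming from $a(x)$ on $\{x\ge-\zeta\}$ is absorbed into the negative drift $-2\mu\abs{\zeta}x$ (using $x\ge-\zeta=\abs{\zeta}$), and the quadratic $x^2$-loss on $[0,-\zeta)$ similarly absorbs the $\delta x$ growth of $a(x)$ there, leaving $g$ bounded by an absolute constant. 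Without this cancellation, the raw generator identity does not close; you would need to argue more carefully in the middle and right regimes, essentially reproducing the paper's drift decomposition.
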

With these two lemmas, we are ready to prove Lemma~\ref{lem:gb}.

\subsection{Proof of Lemma~\ref{lem:gb} }
\begin{proof}[Proof of Lemma~\ref{lem:gb} ]
Recall our assumption that $R \geq 1$, or equivalently, $\delta \leq 1$. Throughout the proof we use $C > 0$ to denote a generic constant that does not depend on $\lambda,n$, and $\mu$, and may change from line to line. We begin by bounding $\abs{f_h'(x)}$. We apply \eqref{eq:fbound1}, \eqref{eq:fbound3}, and \eqref{eq:fbound7} to \eqref{eq:hf11} when $x \leq -\zeta$ to see that
\begin{align*}
\mu \abs{f_h'(x)} \leq&\ C\Big( 1 +\frac{1}{\abs{\zeta}} \Big), \quad x \leq 0,\notag \\
\mu \abs{f_h'(x)} \leq&\  2e^{\frac{1}{2}\zeta^2} + e^{\frac{1}{2}\zeta^2}(\sqrt{\pi} + \sqrt{2\pi} + \abs{\zeta})\EE \big|Y(\infty)\big|, \quad x \in [0,-\zeta],
\end{align*}
and apply \eqref{eq:fbound2}, \eqref{eq:fbound4}, and \eqref{eq:fbound7} to \eqref{eq:hf12} when $x \geq 0$ to see that
\begin{align*}
\mu \abs{f_h'(x)} \leq&\ 1 + \frac{1}{\zeta^2} + \frac{\delta}{2\abs{\zeta}} + \Big( \frac{1}{\abs{\zeta}} + e^{\delta/12} + 1 \Big)\EE \big|Y(\infty)\big|, \quad x \in [0,-\zeta], \notag \\
\mu \abs{f_h'(x)} \leq&\ \frac{C}{\abs{\zeta}} \Big(x + 1 + \frac{1}{\abs{\zeta}}\Big), \quad x \geq -\zeta. 
\end{align*}
Above, we see that there are two possible bounds on $\mu \abs{f_h'(x)}$ when $x \in [0, -\zeta]$. By considering separately the cases when $\abs{\zeta} \leq 1$ and $\abs{\zeta} \geq 1$, and using \eqref{eq:fbound7} to bound $\EE \big| Y(\infty) \big|$, we conclude that
\begin{align*}
\mu \abs{f_h'(x)} \leq&\ C\Big( 1 +\frac{1}{\abs{\zeta}} \Big), \quad x \in [0,-\zeta].
\end{align*}
Therefore,
\begin{align}
\abs{f_h'(x)} 
\leq
\begin{cases}
\frac{C}{\mu }\Big( 1 +\frac{1}{\abs{\zeta}} \Big), \quad x \leq -\zeta,\\
\frac{C}{\mu \abs{\zeta}}\Big(x + 1 + \frac{1}{\abs{\zeta}}\Big), \quad x \geq -\zeta,
\end{cases} \label{eq:inlineder1}
\end{align}
which proves \eqref{eq:WCder1}. We now verify \eqref{eq:mlimits} and \eqref{eq:plimits}, which will allow us to use the two forms of $f_h''(x)$ from \eqref{eq:hf21} and \eqref{eq:hf22}. By rearranging the Poisson equation \eqref{eq:sapoisson}, we see that 
\begin{align*}
p(x)f_h''(x) = -p(x) r(x) f_h'(x) + p(x) \frac{2}{a(x)}\big(\EE h(Y(\infty)) - h(x) \big).
\end{align*}
Now the bound on $\abs{f_h'(x)}$ in \eqref{eq:WCder1}, the form of $r(x)$ in  \eqref{eq:rform} and the fact that $h(x) \in W_2$ imply that $f_h'(x)$, $r(x)$ and $h(x)$ grow at most linearly as $x \to \pm \infty$. Furthermore, the form of $a(x)$ in \eqref{eq:adef} suggests that $1/a(x) \leq 1/\mu$ for all $x \in \R$. However, from the form of $p(x)$ in \eqref{eq:pdef} we see that $p(x)$ decreases exponentially fast as $x \to \infty$, and even faster when $x \to -\infty$. This verifies \eqref{eq:mlimits} and \eqref{eq:plimits}. We now bound $\abs{f_h''(x)}$. Since $h(0) = 0$ and $h(x) \in W_2$, it follows that $\abs{h(x)} \leq \abs{x}$ for all $x \in \R$ and $\norm{h'} \leq 1$. From the expressions for $f_h''(x)$ in \eqref{eq:hf21} and \eqref{eq:hf22}, it follows that
\begin{align}
\abs{f_h''(x)} \leq&\ \frac{1}{p(x)} \int_{-\infty}^{x} \Big(\frac{2}{a(y)} +  \frac{2\abs{a'(y)y}}{a^2(y)} + \frac{2\abs{a'(y)}}{a^2(y)}\EE \big| Y(\infty)\big| + \abs{r'(y) f_h'(y)}\Big)p(y) dy, \label{eq:der2bound1} \\
\abs{f_h''(x)} \leq&\ \frac{1}{p(x)} \int_{x}^{\infty} \Big(\frac{2}{a(y)} +  \frac{2\abs{a'(y)y}}{a^2(y)} + \frac{2\abs{a'(y)}}{a^2(y)}\EE \big| Y(\infty)\big| + \abs{r'(y) f_h'(y)}\Big)p(y) dy. \label{eq:der2bound2}
\end{align}
We now bound the terms inside the integrals above. By definition of $a(x)$ in \eqref{eq:adef}, we see that
\begin{align}
a'(x) = \mu \delta 1(x \in (-1/\delta, -\zeta]), \label{eq:ap}
\end{align}
where $a'(x)$ is interpreted as the left derivative for $x = -1/\delta$ and $x = -\zeta$. Therefore,
\begin{align}
\frac{\abs{a'(x)x}}{a(x)} =&\ \frac{\mu \delta \abs{x}}{\mu (2 + \delta x)}1(x \in (-1/\delta, -\zeta]) \leq 1(x \in (-1/\delta, -\zeta]), \label{eq:ap1} \\
\EE \big| Y(\infty) \big| \frac{\abs{a'(x)}}{a(x)} =&\ \EE \big| Y(\infty) \big|\frac{\mu \delta}{\mu (2 + \delta x)}1(x \in (-1/\delta, -\zeta]) \notag \\
 \leq&\ \delta C\Big(1 + \frac{1}{\abs{\zeta}}\Big) 1(x \in (-1/\delta, -\zeta]), \label{eq:ap2}
\end{align}
where in the last inequality we used \eqref{eq:fbound7} and the fact that $\delta \leq 1$ to bound $\EE \big| Y(\infty) \big|$. Furthermore, the form of $r'(x)$ in \eqref{eq:rform} and the bound on $f_h'(x)$ in  \eqref{eq:inlineder1} imply that
\begin{align}
\abs{r'(x)f_h'(x)} =&\ 2\abs{f_h'(x)}1(x \leq  -1/\delta) + \frac{4}{(2+\delta x)^2}\abs{f_h'(x)}1(x \in  (-1/\delta, -\zeta]) \notag \\
\leq&\ 2\abs{f_h'(x)}1(x \leq  -1/\delta) + \frac{4}{2+\delta x}\abs{f_h'(x)}1(x \in  (-1/\delta, -\zeta]) \notag \\
\leq&\  \frac{C}{1 + 1(x \in  (-1/\delta, -\zeta])(1+ \delta x  )} \frac{1}{\mu } \Big(1 +  \frac{1}{\abs{\zeta}}\Big) 1(x \leq -\zeta)\notag \\
=&\  \frac{C}{a(x)} \Big(1 +  \frac{1}{\abs{\zeta}}\Big) 1(x \leq -\zeta). \label{eq:sabound2}
\end{align}
Therefore, when $x \leq -\zeta$ we apply the bounds in \eqref{eq:ap1}--\eqref{eq:sabound2} to \eqref{eq:der2bound1} to see that
\begin{align}
\abs{f_h''(x)} \leq&\ \frac{C}{p(x)} \int_{-\infty}^{x} \frac{1}{a(y)}\Big(1 +  1(y \in (-1/\delta, -\zeta]) + \delta \Big(1 + \frac{1}{\abs{\zeta}}\Big) 1(y \in (-1/\delta, -\zeta]) \notag \\
& \hspace{5cm} + \Big(1 +  \frac{1}{\abs{\zeta}}\Big) 1(y \leq -\zeta)\Big)p(y) dy, \notag \\
\leq&\ \frac{C}{p(x)} \int_{-\infty}^{x} \frac{1}{a(y)}\Big(1 +  \frac{1}{\abs{\zeta}}\Big)p(y) dy, \quad x \leq -\zeta \label{eq:fppboundprocessed1}
\end{align}
and when $x \geq 0$ we apply the same bounds to \eqref{eq:der2bound2} to see that 
\begin{align}
\abs{f_h''(x)} \leq&\ \frac{C}{p(x)} \int_{x}^{\infty} \frac{1}{a(y)}\Big(1 +  1(y \in (-1/\delta, -\zeta]) + \delta \Big(1 + \frac{1}{\abs{\zeta}}\Big) 1(y \in (-1/\delta, -\zeta]) \notag \\
& \hspace{3.5cm} + \Big(1 +  \frac{1}{\abs{\zeta}}\Big) 1(y \leq -\zeta)\Big)p(y) dy \notag  \\
\leq&\ \frac{C}{p(x)} \int_{x}^{\infty} \frac{1}{a(y)}\Big(1 +  \Big(1 +  \frac{1}{\abs{\zeta}}\Big) 1(y \leq -\zeta)\Big)p(y) dy, \quad x \geq 0. \label{eq:fppboundprocessed2}
\end{align} 
We apply \eqref{eq:fbound1} to \eqref{eq:fppboundprocessed1} and \eqref{eq:fbound2} to \eqref{eq:fppboundprocessed2} to get
\begin{align*}
\abs{f_h''(x)} \leq&\ 
\begin{cases}
\frac{C}{\mu }\Big(1 +  \frac{1}{\abs{\zeta}}\Big), \quad x \leq 0,\\
\min \Big\{e^{\zeta^2/2} (\sqrt{\pi} + \sqrt{2\pi} + \abs{\zeta}) , \frac{1}{\abs{\zeta}} + e^{\delta/12} + 1 \Big\}\frac{C}{\mu }\Big(1 +  \frac{1}{\abs{\zeta}}\Big), \quad x \in [0,-\zeta],\\
\frac{C}{\mu \abs{\zeta}}, \quad x \geq -\zeta,
\end{cases} 
\end{align*}
and by considering separately the cases when $\abs{\zeta} \leq 1$ and $\abs{\zeta} \geq 1$, and recalling that $\delta \leq 1$, we conclude that
\begin{align}
\abs{f_h''(x)} \leq&\ 
\begin{cases}
\frac{C}{\mu }\Big(1 +  \frac{1}{\abs{\zeta}}\Big), \quad x \leq -\zeta,\\
\frac{C}{\mu \abs{\zeta}}, \quad x \geq -\zeta,
\end{cases} \label{eq:inlineder2}
\end{align}
which proves \eqref{eq:WCder2}. 

Now we prove \eqref{eq:WCder3}. Recall the form of $f_h'''(x)$ from \eqref{eq:hf3}, which together with the facts that $\abs{h(x)} \leq \abs{x}$ and $\norm{h'} \leq 1$ implies 
\begin{align*}
\abs{f_h'''(x)} \leq&\ \abs{r'(x) f_h'(x)} + \abs{r(x) f_h''(x)} + \frac{2}{a(x)} + \frac{2\abs{a'(x)}}{a^2(x)}\Big( \abs{x} + \EE \big| Y(\infty)\big| \Big), \quad x \in \R,
\end{align*}
where $f_h'''(x)$ is interpreted as the left derivative at the points $x = -1/\delta$ and $x = -\zeta$. We apply the bound on $\abs{r'(x) f_h'(x)}$ from \eqref{eq:sabound2}, the bounds on $\abs{a'(x) x}/a(x)$ and $\EE \big| Y(\infty)\big|\abs{a'(x)}/a(x)$ from \eqref{eq:ap1} and \eqref{eq:ap2}, and the fact that $1/a(x) \leq 1/\mu$ for all $x \in \R$ to see that 
\begin{align*}
\abs{f_h'''(x)} \leq&\ \frac{C}{\mu }\Big( 1 + \frac{1}{\abs{\zeta}} \Big) 1(x \leq -\zeta) + \frac{C}{\mu }1(x > -\zeta)  + \abs{r(x) f_h''(x)}.
\end{align*}
It remains to bound $\abs{r(x) f_h''(x)}$, but this term does not pose much added difficulty. Indeed, one can multiply both sides of \eqref{eq:fppboundprocessed1} and \eqref{eq:fppboundprocessed2} by $\abs{r(x)}$ and invoke \eqref{eq:fbound5} and \eqref{eq:fbound6} of Lemma~\ref{lem:lowlevelbounds} to arrive at 
\begin{align*}
\abs{r(x) f_h''(x)}  \leq&\ 
\begin{cases}
\frac{C}{\mu }\Big(1 +  \frac{1}{\abs{\zeta}}\Big), \quad x \leq -\zeta,\\
\frac{C}{\mu }, \quad x \geq -\zeta.
\end{cases}
\end{align*}
This proves \eqref{eq:WCder3} and concludes the proof of this lemma.
\end{proof}

\subsection{Proof of Lemma~\ref{lem:handle}}
\begin{proof}[Proof of Lemma~\ref{lem:handle}]
We first prove \eqref{eq:hf11}--\eqref{eq:hf3}. Recall that the family of solutions to the Poisson equation is given by \eqref{eq:poissonsolution}, and is parametrized by constants $a_1, a_2 \in \R$. We fix a solution $f_h(x)$ with $a_2 = 0$, and see that for this solution,
\begin{align}
f_h'(x) =\frac{1}{p(x)} \int_{-\infty}^{x} \frac{2}{a(y)}\big(\EE h(Y(\infty)) - h(y) \big) p(y) dy, \quad x \in \R. \label{eq:fprime1}
\end{align}
Recall that $\nu(x)$, the density of $Y(\infty)$ stated in \eqref{eq:stdden}, is proportional to $p(x)/a(x)$, meaning that
\begin{align*}
\int_{-\infty}^{\infty}\frac{2}{a(y)} \big(\EE h(Y(\infty)) - h(y) \big) p(y) dy = 0.
\end{align*}
Therefore, $f_h'(x)$ can be alternatively written as
\begin{align}
f_h'(x) =&\ -\frac{1}{p(x)} \int_{x}^{\infty} \frac{2}{a(y)}\big(\EE h(Y(\infty)) - h(y)\big) p(y) dy, \quad x \in \R. \label{eq:fprime2}
\end{align}
To obtain the derivative bounds stated in Lemma~\ref{lem:gb}, we need to make use of both expressions for $f_h'(x)$ in \eqref{eq:fprime1} and \eqref{eq:fprime2}, depending on the value of $x$. Observe that if $\EE h(Y(\infty))$ were to be replaced by any other constant, then \eqref{eq:fprime2} would not hold. Since $h(0) = 0$ and $h(x) \in W_2$, we know that $\abs{h(x)} \leq \abs{x}$ for all $x \in \R$. Combining this with \eqref{eq:fprime1} and \eqref{eq:fprime2} proves \eqref{eq:hf11} and \eqref{eq:hf12}.
 

Differentiating the Poisson equation \eqref{eq:sapoisson}, we see that 
\begin{align*}
f_h'''(x) = -r'(x) f_h'(x) - r(x) f_h''(x) - \frac{2}{a(x)}h'(x) - \frac{2a'(x)}{a^2(x)}\big( \EE h(Y(\infty)) - h(x)\big),
\end{align*}
where $a'(x)$ is interpreted as the left derivative at the points $x = -1/\delta$ and $x = -\zeta$. This proves \eqref{eq:hf3}, and we now move on to prove \eqref{eq:hf21} and \eqref{eq:hf22}. Multiplying both sides above by $p(x)$, and observing that $r(x) = p'(x)/p(x)$, we see that 
\begin{align*}
\big( p(x) f_h''(x) \big)' =&\ p(x) f_h'''(x) + p'(x) f_h''(x) \\
=&\ \Big(-r'(x) f_h'(x) - \frac{2}{a(x)}h'(x) - \frac{2a'(x)}{a^2(x)}\big( \EE h(Y(\infty))- h(x) \big)\Big)p(x).
\end{align*}
By assumption,  $\lim_{x \to -\infty} p(x)f_h''(x) = 0$, meaning that we can integrate the equation above to get
\begin{align*}
f_h''(x) =&\ \frac{1}{p(x)} \int_{-\infty}^{x} \Big(-\frac{2}{a(y)} h'(y) - \frac{2a'(y)}{a^2(y)}\big[\EE h(Y(\infty)) - h(y) \big] - r'(y) f_h'(y)\Big)p(y) dy.
\end{align*}
Since we also assumed that $\lim_{x \to \infty} p(x)f_h''(x) = 0$, we have
\begin{align*}
f_h''(x) =&\  -\frac{1}{p(x)} \int_{x}^{\infty} \Big(-\frac{2}{a(y)} h'(y) - \frac{2a'(y)}{a^2(y)}\big[\EE h(Y(\infty)) - h(y) \big] - r'(y) f_h'(y)\Big)p(y) dy.
\end{align*}
This concludes the proof of this lemma.
\end{proof}

\subsection{Proof of Lemma~\ref{lem:lowlevelbounds}}
\begin{proof}[Proof of Lemma~\ref{lem:lowlevelbounds}]
To prove this lemma we verify \eqref{eq:fbound1}--\eqref{eq:fbound7} one at a time. During the proof we will often consider separately the cases when $x$ belongs to one of the intervals $(-\infty, -1/\delta]$, $[-1/\delta, 0]$, $[0,-\zeta]$, and $[-\zeta, \infty)$. We first present several useful inequalities. Observe that for any $x>0$ and $c > 0$,
\begin{equation}
\label{eq:usefulbound}
e^{\frac{c}{2}x^2}\int_{x}^{\infty}{ e^{-\frac{c}{2} y^2} dy} \leq \int_{0}^{\infty}{ e^{-\frac{c}{2} y^2} dy} = \sqrt{\frac{\pi}{2c}}.
\end{equation}
One can verify that the left hand side of (\ref{eq:usefulbound}) peaks at $x=0$ by using the bound
\begin{equation} \label{eq:normcdfbound}
\int_{x}^{\infty}{ e^{-\frac{c}{2} u^2} du} \leq \int_{x}^{\infty}{\frac{u}{x}e^{-\frac{c}{2} u^2} du} = \frac{e^{-cx^2/2}}{cx}
\end{equation}
to see that the derivative of the left side of (\ref{eq:usefulbound}) is negative for $x>0$. 

We now prove \eqref{eq:fbound1}. Recall the form of $a(x)$ from \eqref{eq:adef}, and $p(x)$ from \eqref{eq:pdef}. When $x \leq -1/\delta$, we invoke \eqref{eq:usefulbound} to see that
\begin{align*}
\frac{1}{p(x)} \int_{-\infty}^{x} \frac{2}{a(y)}p(y) dy = e^{x^2} \int_{-\infty}^{x} \frac{2}{\mu }e^{-y^2} dy \leq \frac{2}{\mu}\sqrt{\frac{\pi}{4}} = \frac{\sqrt{\pi}}{\mu }.
\end{align*}
Now when $x \in [-1/\delta, 0]$,
\begin{align}
\frac{1}{p(x)} \int_{-\infty}^{x} \frac{2}{a(y)}p(y) dy =&\ e^{\frac{3}{\delta^2}}e^{\frac{2}{\delta^2}[\delta x - 2\log(2 + \delta x)]}\int_{-\infty}^{-1/\delta} \frac{2}{\mu }e^{-y^2} dy \notag \\
&+ e^{\frac{2}{\delta^2}[\delta x - 2\log(2 + \delta x)]}\int_{-1/\delta}^{x} \frac{2}{\mu(2 + \delta y) }e^{\frac{2}{\delta^2}[2\log(2 + \delta y) - \delta y]} dy. \label{eq:sa1}
\end{align}
By differentiating the function $e^{\frac{2}{\delta^2}[\delta x - 2\log(2 + \delta x)]}$, one can see that it achieves its maximum  on the interval $[-1/\delta, 0]$ at the point $x = -1/\delta$, and therefore
\begin{align}
\sup_{x \in (-1/\delta, 0]} e^{\frac{2}{\delta^2}[\delta x - 2\log(2 + \delta x)]} = e^{-2/\delta^2}. \label{eq:logsup}
\end{align} 
We use \eqref{eq:usefulbound} and \eqref{eq:logsup} to see that for $x \in [-1/\delta, 0]$, 
\begin{align}
e^{\frac{3}{\delta^2}}e^{\frac{2}{\delta^2}[\delta x - 2\log(2 + \delta x)]}\int_{-\infty}^{-1/\delta}  \frac{2}{\mu }e^{-y^2} dy \leq&\  \frac{2}{\mu } e^{\frac{3}{\delta^2}} e^{-2/\delta^2} e^{-1/\delta^2} \frac{\sqrt{\pi}}{2} = \frac{\sqrt{\pi}}{\mu }. \label{eq:fonetermone}
\end{align}
 To bound the second term in \eqref{eq:sa1}, we claim that 
\begin{align*}
g_1(x) \equiv e^{\frac{2}{\delta^2}[\delta x-2\log(2 + \delta x)]}\int_{-1/\delta}^{x} \frac{2}{\mu(2 + \delta y) }e^{\frac{2}{\delta^2}[2\log(2 + \delta y) - \delta y]} dy \leq g_1(0), \quad x \in [-1/\delta, 0]. 
\end{align*}
We prove this by showing that $g_1'(x) \geq 0$ for all $x \in [-1/\delta, 0]$. Observe that 
\begin{align*}
g_1'(x) = \frac{2x}{2+\delta x} g_1(x) + \frac{1}{\mu }\frac{2}{2 + \delta x} = \frac{2}{2 + \delta x}(x g_1(x) + \frac{1}{\mu }), \quad x \in (-1/\delta, 0]. 
\end{align*}
Hence, we want to show that
\begin{align*}
x g_1(x) + \frac{1}{\mu } \geq 0, \quad x \in (-1/\delta, 0]. 
\end{align*} 
Fix $x \in (-1/\delta, 0]$ and observe that
\begin{align*}
\abs{x g_1(x)} = &\ -x e^{\frac{2}{\delta^2}[\delta x-2\log(2 + \delta x)]}\int_{-1/\delta}^{x} \frac{2}{\mu(2 + \delta y) }e^{\frac{2}{\delta^2}[2\log(2 + \delta y) - \delta y]} dy\\
\leq&\ -x e^{\frac{2}{\delta^2}[\delta x-2\log(2 + \delta x)]}\int_{-1/\delta}^{x} \frac{-y}{-x} \frac{2}{\mu(2 + \delta y) }e^{\frac{2}{\delta^2}[2\log(2 + \delta y) - \delta y]} dy\\
=&\ e^{\frac{2}{\delta^2}[\delta x-2\log(2 + \delta x)]}\int_{-1/\delta}^{x}  \frac{-2y}{\mu(2 + \delta y) }e^{\frac{2}{\delta^2}[2\log(2 + \delta y) - \delta y]} dy.
\end{align*}
To solve the integral above, we define $u(y) = \frac{2}{\delta^2}\big[ 2\log(2 + \delta y) - \delta y\big]$, and observe that
\begin{align}
\int_{-1/\delta}^{x}  \frac{-2y}{\mu(2 + \delta y) }e^{\frac{2}{\delta^2}[2\log(2 + \delta y) - \delta y]} dy = \frac{1}{\mu }\int_{-1/\delta}^{x}  u'(y) e^{u(y)} dy = \frac{1}{\mu }\Big[e^{u(y)} \Big|_{y= -1/\delta}^{x} \Big], \label{eq:integral}
\end{align}
and therefore
\begin{align*}
\abs{x g_1(x)} \leq&\ \frac{1}{\mu } e^{\frac{2}{\delta^2}[\delta x-2\log(2 + \delta x)]}\Big[e^{\frac{2}{\delta^2}[2\log(2 + \delta y) - \delta y]} \Big|_{-1/\delta}^{x} \Big]\leq \frac{1}{\mu },
\end{align*}
which implies that the second term in \eqref{eq:sa1} is bounded by $g(0)$. We apply this fact to \eqref{eq:sa1} together with the bound in \eqref{eq:fonetermone} to see that for $x \in [-1/\delta, 0]$,
\begin{align*}
&\ \frac{1}{p(x)} \int_{-\infty}^{x} \frac{2}{a(y)}p(y) dy \leq  \frac{\sqrt{\pi}}{\mu } + e^{\frac{2}{\delta^2}[- 2\log(2 )]}\int_{-1/\delta}^{0} \frac{2}{\mu(2 + \delta y) }e^{\frac{2}{\delta^2}[2\log(2 + \delta y) - \delta y]} dy.
\end{align*}
Using Taylor expansion, we see that
\begin{align}
2\log(2 + \delta y) = 2\log(2) + \frac{2}{2}\delta y - \frac{2}{2^2} \frac{(\delta y)^2}{2} + \frac{2}{(2 + \xi(\delta y))^3} \frac{(\delta y)^3}{6}, \label{eq:logtaylor}
\end{align}
where $\xi(\delta y)$ is some point between $\delta y$ and $0$. Therefore,
\begin{align}
&\ e^{\frac{2}{\delta^2}[-2\log(2)]}\int_{-1/\delta}^{0} \frac{2}{\mu(2 + \delta y) }e^{\frac{2}{\delta^2}[2\log(2 + \delta y) - \delta y]} dy \notag \\ 
=&\ \int_{-1/\delta}^{0} \frac{2}{\mu(2 + \delta y) }e^{\frac{2}{\delta^2}\big[ - \frac{2}{2^2} \frac{(\delta y)^2}{2} + \frac{2}{(2 + \xi(\delta y))^3} \frac{(\delta y)^3}{6}\big]} dy \notag \\
\leq&\ \frac{2}{\mu } \int_{-1/\delta}^{0} e^{-\frac{y^2}{2} } dy \leq \frac{2}{\mu} \sqrt{\frac{\pi}{2}}, \label{eq:fonetwo}
\end{align}
which proves \eqref{eq:fbound1} when $x \in [-1/\delta, 0]$. For the final part of \eqref{eq:fbound1}, we fix $x \in [0,-\zeta]$ and observe that 
\begin{align}
\frac{1}{p(x)} \int_{-\infty}^{x} \frac{2}{a(y)}p(y) dy =&\ e^{\frac{3}{\delta^2}}e^{\frac{2}{\delta^2}[\delta x - 2\log(2 + \delta x)]}\int_{-\infty}^{-1/\delta} \frac{2}{\mu }e^{-y^2} dy \notag \\
&+ e^{\frac{2}{\delta^2}[\delta x - 2\log(2 + \delta x)]}\int_{-1/\delta}^{0} \frac{2}{\mu(2 + \delta y) }e^{\frac{2}{\delta^2}[2\log(2 + \delta y) - \delta y]} dy \notag \\ 
&+ e^{\frac{2}{\delta^2}[\delta x - 2\log(2 + \delta x)]}\int_{0}^{x} \frac{2}{\mu(2 + \delta y) }e^{\frac{2}{\delta^2}[2\log(2 + \delta y) - \delta y]} dy. \label{eq:fonethree}
\end{align}
To bound all three terms on the right hand side, we use the Taylor expansion 
\begin{align}
2\log(2 + \delta y) = 2\log(2) + \frac{2}{2}\delta y - \frac{2}{(2 + \xi(\delta y))^2} \frac{(\delta y)^2}{2} , \label{eq:logtaylor2}
\end{align}
where $\xi(\delta y)$ is some point between $0$ and $\delta y$, to see that  for all $ x \in [0,-\zeta]$,
\begin{align}
e^{\frac{2}{\delta^2}[\delta x - 2\log(2 + \delta x)]} = e^{\frac{2}{\delta^2}[- 2\log(2) + \frac{(\delta x)^2}{(2+\xi(\delta x))^2}]} \leq e^{\frac{2}{\delta^2}[- 2\log(2)]}e^{\frac{x^2}{2}}  \leq e^{\frac{2}{\delta^2}[- 2\log(2)]}e^{\frac{\zeta^2}{2}}. \label{eq:logtaylorineq}
\end{align}
We use \eqref{eq:logtaylorineq} to bound each of the terms on the right hand side of \eqref{eq:fonethree}. For the first term, using \eqref{eq:usefulbound} we get
\begin{align*}
e^{\frac{3}{\delta^2}}e^{\frac{2}{\delta^2}[\delta x - 2\log(2 + \delta x)]}\int_{-\infty}^{-1/\delta}  \frac{2}{\mu }e^{-y^2} dy \leq&\ \frac{2}{\mu } e^{\frac{3}{\delta^2}}e^{\frac{2}{\delta^2}[\delta x - 2\log(2 + \delta x)]}\frac{\sqrt{\pi}}{2} e^{-1/\delta^2} \\
\leq&\  \frac{\sqrt{\pi}}{\mu }e^{2/\delta^2}e^{\frac{\zeta^2}{2}}e^{\frac{2}{\delta^2}[- 2\log(2)]} \leq  \frac{\sqrt{\pi}}{\mu }e^{\frac{\zeta^2}{2}}.
\end{align*}
For the second term in \eqref{eq:fonethree}, we use \eqref{eq:fonetwo} and \eqref{eq:logtaylorineq} to see that
\begin{align*}
&\ e^{\frac{2}{\delta^2}[\delta x - 2\log(2 + \delta x)]}\int_{-1/\delta}^{0} \frac{2}{\mu(2 + \delta y) }e^{\frac{2}{\delta^2}[2\log(2 + \delta y) - \delta y]} dy \\
\leq&\ e^{\frac{\zeta^2}{2}}e^{\frac{2}{\delta^2}[- 2\log(2)]}\int_{-1/\delta}^{0} \frac{2}{\mu(2 + \delta y) }e^{\frac{2}{\delta^2}[2\log(2 + \delta y) - \delta y]} dy \leq e^{\frac{\zeta^2}{2}}\frac{\sqrt{2\pi}}{\mu }.
\end{align*}
To bound the third term in \eqref{eq:fonethree}, we use \eqref{eq:logtaylorineq}, and the fact that $e^{\frac{2}{\delta^2}[2\log(2 + \delta y) - \delta y]}\leq e^{\frac{2}{\delta^2}[2\log(2)]}$ for all $y \geq 0$, which can be checked by differentiating the function $e^{\frac{2}{\delta^2}[2\log(2 + \delta y) - \delta y]}$ to see that it peaks at $y = 0$, to see that
\begin{align*}
&\ e^{\frac{2}{\delta^2}[\delta x - 2\log(2 + \delta x)]}\int_{0}^{x} \frac{2}{\mu(2 + \delta y) }e^{\frac{2}{\delta^2}[2\log(2 + \delta y) - \delta y]} dy \\
\leq&\ e^{\frac{\zeta^2}{2}}e^{\frac{2}{\delta^2}[- 2\log(2)]}\int_{0}^{x} \frac{2}{\mu(2 + \delta y) }e^{\frac{2}{\delta^2}[2\log(2 + \delta y) - \delta y]} dy\\
\leq&\ e^{\frac{\zeta^2}{2}}e^{\frac{2}{\delta^2}[- 2\log(2)]}\int_{0}^{x} \frac{1}{\mu} e^{\frac{2}{\delta^2}[2\log(2)]} dy\\
\leq&\ e^{\frac{\zeta^2}{2}}\int_{0}^{\abs{\zeta}} \frac{1}{\mu } dy 
\leq \frac{\abs{\zeta}}{\mu } e^{\frac{\zeta^2}{2}}.
\end{align*}
We combine these bounds with \eqref{eq:fonethree} to conclude that
\begin{align*}
\frac{1}{p(x)} \int_{-\infty}^{x} \frac{2}{a(y)}p(y) dy \leq \frac{1}{\mu }e^{\frac{\zeta^2}{2}}(\sqrt{\pi} + \sqrt{2\pi} + \abs{\zeta}), \quad x \in [0,-\zeta],
\end{align*}
which proves \eqref{eq:fbound1}. We now prove \eqref{eq:fbound2}. Fix $x \in [0,-\zeta]$ and consider 
\begin{align}
&\ \frac{1}{p(x)} \int_{x}^{\infty} \frac{2}{a(y)}p(y) dy \notag \\
=&\ e^{ \frac{2}{\delta^2}(2\log(2 + \delta \abs{\zeta}) - \delta \abs{\zeta} )}e^{\frac{2\zeta^2}{2 + \delta \abs{\zeta}} }e^{\frac{2}{\delta^2}[\delta x - 2\log(2 + \delta x)]}\int_{-\zeta}^{\infty} \frac{2}{\mu ( 2 + \abs{\zeta}\delta)}e^{-\frac{2\abs{\zeta}}{ 2 + \abs{\zeta}\delta}y} dy \notag \\
&+ e^{\frac{2}{\delta^2}[\delta x - 2\log(2 + \delta x)]}\int_{x}^{-\zeta} \frac{2}{\mu(2 + \delta y) }e^{\frac{2}{\delta^2}[2\log(2 + \delta y) - \delta y]} dy. \label{eq:sa2}
\end{align}
We now bound the first term on the right hand side. By differentiating, we can check that the function $e^{\frac{2}{\delta^2}[ 2\log(2 + \delta x) - \delta x]} $ is monotonically decreasing for $x \geq 0$, implying that
\begin{align}
e^{ \frac{2}{\delta^2}(2\log(2 + \delta \abs{\zeta}) - \delta \abs{\zeta} )}e^{\frac{2}{\delta^2}[\delta x - 2\log(2 + \delta x)]} \leq 1, \quad x \geq 0. \label{eq:monotone}
\end{align}
Therefore, the first term on the right side of \eqref{eq:sa2} can be bounded by 
\begin{align*}
e^{\frac{2\zeta^2}{2 + \delta \abs{\zeta}} }\int_{-\zeta}^{\infty} \frac{2}{\mu ( 2 + \abs{\zeta}\delta)}e^{-\frac{2\abs{\zeta}}{ 2 + \abs{\zeta}\delta}y} dy =&\ \frac{1}{\mu \abs{\zeta}} e^{\frac{2\zeta^2}{2 + \delta \abs{\zeta}} }\Big[ -e^{-\frac{2\abs{\zeta}}{ 2 + \abs{\zeta}\delta}y} \Big|_{-\zeta}^{\infty} \Big] = \frac{1}{\mu \abs{\zeta}}.
\end{align*}
 We now bound the second term in \eqref{eq:sa2}. We first show that 
\begin{align*}
g_2(x) \equiv e^{\frac{2}{\delta^2}[\delta x - 2\log(2 + \delta x)]}\int_{x}^{-\zeta} \frac{2}{\mu(2 + \delta y) }e^{\frac{2}{\delta^2}[2\log(2 + \delta y) - \delta y]} dy \leq g_2(0),
\end{align*}
by showing that $g_2'(x) \leq 0$ for all $x \in [0,-\zeta]$. Observe that 
\begin{align*}
g_2'(x) = \frac{2x}{2 + \delta x}g_2(x) - \frac{1}{\mu }\frac{2}{2 + \delta x} = \frac{2}{2 + \delta x}(x g_2(x) - \frac{1}{\mu }), \quad x \in [0,-\zeta].
\end{align*}
Now 
\begin{align*}
0 < x g_2(x) = &\ x e^{\frac{2}{\delta^2}[\delta x-2\log(2 + \delta x)]}\int_{x}^{-\zeta} \frac{2}{\mu(2 + \delta y) }e^{\frac{2}{\delta^2}[2\log(2 + \delta y) - \delta y]} dy\\
\leq&\ x e^{\frac{2}{\delta^2}[\delta x-2\log(2 + \delta x)]}\int_{x}^{-\zeta} \frac{y}{x} \frac{2}{\mu(2 + \delta y) }e^{\frac{2}{\delta^2}[2\log(2 + \delta y) - \delta y]} dy\\
=&\ e^{\frac{2}{\delta^2}[\delta x-2\log(2 + \delta x)]}\int_{x}^{-\zeta}  \frac{2y}{\mu(2 + \delta y) }e^{\frac{2}{\delta^2}[2\log(2 + \delta y) - \delta y]} dy\\
=&\ \frac{1}{\mu } e^{\frac{2}{\delta^2}[\delta x-2\log(2 + \delta x)]}\Big[-e^{\frac{2}{\delta^2}[2\log(2 + \delta y) - \delta y]} \Big|_{x}^{-\zeta} \Big] \leq \frac{1}{\mu },
\end{align*}
where in the last equality we solved the integral by substitution just like in \eqref{eq:integral}. Therefore, $g_2(x) \leq g_2(0)$ for all $x \in [0,-\zeta]$. Now 
\begin{align*}
g_2(0) =&\  e^{\frac{2}{\delta^2}[-2\log(2)]}\int_{0}^{-\zeta} \frac{2}{\mu(2 + \delta y) }e^{\frac{2}{\delta^2}[2\log(2 + \delta y) - \delta y]} dy \\
=&\  e^{\frac{2}{\delta^2}[-2\log(2)]}\int_{0}^{1} \frac{2}{\mu(2 + \delta y) }e^{\frac{2}{\delta^2}[2\log(2 + \delta y) - \delta y]} dy \\
&+ e^{\frac{2}{\delta^2}[-2\log(2)]}\int_{1}^{-\zeta} \frac{2}{\mu(2 + \delta y) }e^{\frac{2}{\delta^2}[2\log(2 + \delta y) - \delta y]} dy.
\end{align*}
For the first integral above, we use the Taylor expansion in \eqref{eq:logtaylor} to see that 
\begin{align*}
e^{\frac{2}{\delta^2}[-2\log(2)]}\int_{0}^{1} \frac{2}{\mu(2 + \delta y) }e^{\frac{2}{\delta^2}[2\log(2 + \delta y) - \delta y]} dy =&\ \int_{0}^{1} \frac{2}{\mu(2 + \delta y) }e^{\frac{2}{\delta^2}[- \frac{2}{2^2} \frac{(\delta y)^2}{2} + \frac{2}{(2 + \xi(\delta y))^3} \frac{(\delta y)^3}{6}]} dy\\
 \leq&\  \frac{1}{\mu } \int_{0}^{1} e^{-\frac{y^2}{2}  + \frac{\delta}{12} } dy \leq \frac{1}{\mu } e^{\delta/12}. 
\end{align*}
For the second integral, observe that 
\begin{align*}
&\ e^{\frac{2}{\delta^2}[-2\log(2)]}\int_{1}^{-\zeta} \frac{2}{\mu(2 + \delta y) }e^{\frac{2}{\delta^2}[2\log(2 + \delta y) - \delta y]} dy \\
\leq&\ \frac{1}{\mu } e^{\frac{2}{\delta^2}[-2\log(2)]}\int_{1}^{-\zeta} \frac{2y}{2 + \delta y }e^{\frac{2}{\delta^2}[2\log(2 + \delta y) - \delta y]} dy\\
=&\ \frac{1}{\mu }e^{\frac{2}{\delta^2}[-2\log(2)]} \Big[-e^{\frac{2}{\delta^2}[2\log(2 + \delta y) - \delta y]} \Big|_{1}^{-\zeta} \Big] \leq \frac{1}{\mu },
\end{align*}
where the equality is obtained by solving the integral just like in \eqref{eq:integral}, and in the last inequality above we used the fact that
\begin{align*}
e^{\frac{2}{\delta^2}[2\log(2 + \delta y) - \delta y]} \leq e^{\frac{2}{\delta^2}[2\log(2)]}, \quad y \geq 0.
\end{align*}
Therefore, 
\begin{align*}
g_2(0) \leq \frac{1}{\mu } \Big(e^{\delta/12} + 1 \Big).
\end{align*}
To recap, we now have the following bound on \eqref{eq:sa2}:
\begin{align*}
\frac{1}{p(x)} \int_{x}^{\infty} \frac{2}{a(y)}p(y) dy \leq&\ \frac{1}{\mu \abs{\zeta}} + g_2(0) \leq \frac{1}{\mu } \Big( \frac{1}{\abs{\zeta}} + e^{\delta/12} + 1 \Big), \quad x \in [0,-\zeta],
\end{align*}
which proves the first part of \eqref{eq:fbound2}. For the second part, we fix $x \geq -\zeta$ and see that
\begin{align*}
&\ \frac{1}{p(x)} \int_{x}^{\infty} \frac{2}{a(y)}p(y) dy \notag = e^{\frac{2\abs{\zeta}}{ 2 + \abs{\zeta}\delta}x}\int_{x}^{\infty}  \frac{2}{\mu ( 2 + \abs{\zeta}\delta)}e^{-\frac{2\abs{\zeta}}{ 2 + \abs{\zeta}\delta}y} dy= \frac{1}{\mu \abs{\zeta}},
\end{align*}
which concludes the proof of \eqref{eq:fbound2}. We now prove \eqref{eq:fbound3}. Fix $x \leq -1/\delta$, then 
\begin{align*} 
\frac{1}{p(x)} \int_{-\infty}^{x} \frac{2\abs{y}}{a(y)}p(y) dy = e^{x^2} \int_{-\infty}^{x} \frac{-2y}{\mu }e^{-y^2} dy = \frac{1}{\mu }.
\end{align*}
Now fix $x \in [-1/\delta, 0]$, then
\begin{align*}
 \frac{1}{p(x)} \int_{-\infty}^{x} \frac{2\abs{y}}{a(y)}p(y) dy =&\ e^{\frac{3}{\delta^2}}e^{\frac{2}{\delta^2}[\delta x - 2\log(2 + \delta x)]}\int_{-\infty}^{-1/\delta} \frac{-2y}{\mu }e^{-y^2} dy \notag \\
&+ e^{\frac{2}{\delta^2}[\delta x - 2\log(2 + \delta x)]}\int_{-1/\delta}^{x} \frac{-2y}{\mu(2 + \delta y) }e^{\frac{2}{\delta^2}[2\log(2 + \delta y) - \delta y]} dy\\
=&\ \frac{1}{\mu } e^{\frac{2}{\delta^2}}e^{\frac{2}{\delta^2}[\delta x - 2\log(2 + \delta x)]}\notag \\
&+ \frac{1}{\mu } e^{\frac{2}{\delta^2}[\delta x - 2\log(2 + \delta x)]}\Big[ e^{\frac{2}{\delta^2}[2\log(2 + \delta y) - \delta y]}\Big|_{-1/\delta}^{x} \Big]\\
=&\ \frac{1}{\mu },
\end{align*}
where in the second equality we computed the integral like in \eqref{eq:integral}. We now fix $x \in [0,-\zeta]$ and observe that
\begin{align*}
\frac{1}{p(x)} \int_{-\infty}^{x} \frac{2\abs{y}}{a(y)}p(y) dy =&\ e^{\frac{3}{\delta^2}}e^{\frac{2}{\delta^2}[\delta x - 2\log(2 + \delta x)]}\int_{-\infty}^{-1/\delta} \frac{-2y}{\mu }e^{-y^2} dy \notag \\
&+ e^{\frac{2}{\delta^2}[\delta x - 2\log(2 + \delta x)]}\int_{-1/\delta}^{0} \frac{-2y}{\mu(2 + \delta y) }e^{\frac{2}{\delta^2}[2\log(2 + \delta y) - \delta y]} dy\notag \\
&+ e^{\frac{2}{\delta^2}[\delta x - 2\log(2 + \delta x)]}\int_{0}^{x} \frac{2y}{\mu(2 + \delta y) }e^{\frac{2}{\delta^2}[2\log(2 + \delta y) - \delta y]} dy\notag \\
=&\ \frac{1}{\mu } e^{\frac{2}{\delta^2}} e^{\frac{2}{\delta^2}[\delta x - 2\log(2 + \delta x)]}  \notag \\
&+ \frac{1}{\mu } e^{\frac{2}{\delta^2}[\delta x - 2\log(2 + \delta x)]}\Big[ e^{\frac{2}{\delta^2}[2\log(2)]}  - e^{\frac{2}{\delta^2}} \Big]\notag \\
&+ \frac{1}{\mu } e^{\frac{2}{\delta^2}[\delta x - 2\log(2 + \delta x)]}\Big[ e^{\frac{2}{\delta^2}[2\log(2)]} -  e^{\frac{2}{\delta^2}[2\log(2 + \delta x) - \delta x]}  \Big] \notag \\
\leq&\ \frac{2}{\mu } e^{\frac{2}{\delta^2}[\delta x - 2\log(2 + \delta x)]}e^{\frac{2}{\delta^2}[2\log(2)]} \\
\leq&\ \frac{2}{\mu }e^{\frac{\zeta^2}{2}},
\end{align*}
where in the second equality we computed the integral like in \eqref{eq:integral}, and in the last inequality we used \eqref{eq:logtaylorineq}. This proves \eqref{eq:fbound3}, and now we prove \eqref{eq:fbound4}. Fix $x \in [0,-\zeta]$ and observe that 
\begin{align*}
&\ \frac{1}{p(x)} \int_{x}^{\infty} \frac{2\abs{y}}{a(y)}p(y) dy \notag \\
=&\ e^{\frac{2}{\delta^2}[\delta x - 2\log(2 + \delta x)]}\int_{x}^{-\zeta} \frac{2y}{\mu(2 + \delta y) }e^{\frac{2}{\delta^2}[2\log(2 + \delta y) - \delta y]} dy\notag \\
&+ e^{\frac{2}{\delta^2}[ 2\log(2 + \delta \abs{\zeta}) - \delta \abs{\zeta}]}e^{\frac{2\zeta^2}{2+\delta\abs{\zeta}}}e^{\frac{2}{\delta^2}[\delta x - 2\log(2 + \delta x)]}\int_{-\zeta}^{\infty}\frac{2y}{\mu ( 2 + \abs{\zeta}\delta)}e^{-\frac{2\abs{\zeta}}{ 2 + \abs{\zeta}\delta}y} dy\notag \\
=&\ \frac{1}{\mu } e^{\frac{2}{\delta^2}[\delta x - 2\log(2 + \delta x)]}\Big[e^{\frac{2}{\delta^2}[2\log(2 + \delta x) - \delta x]}  - e^{\frac{2}{\delta^2}[2\log(2 + \delta \abs{\zeta}) - \delta \abs{\zeta}]}  \Big] \\
&+\frac{1}{\mu } e^{\frac{2}{\delta^2}[ 2\log(2 + \delta \abs{\zeta}) - \delta \abs{\zeta}]}e^{\frac{2\zeta^2}{2+\delta\abs{\zeta}}}e^{\frac{2}{\delta^2}[\delta x - 2\log(2 + \delta x)]}\Big[e^{-\frac{2\zeta^2}{ 2 + \abs{\zeta}\delta}} + \frac{2 + \delta \abs{\zeta}}{2\zeta^2} e^{-\frac{2\zeta^2}{ 2 + \abs{\zeta}\delta}}  \Big],
\end{align*}
where the first term was integrated like in \eqref{eq:integral}, and for the second integral we used integration by parts. The quantity above equals
\begin{align*}
& \frac{1}{\mu } \Big[1  - e^{\frac{2}{\delta^2}[\delta x - 2\log(2 + \delta x)]}e^{\frac{2}{\delta^2}[2\log(2 + \delta \abs{\zeta}) - \delta \abs{\zeta}]}  \Big] \\
&+\frac{1}{\mu  } e^{\frac{2}{\delta^2}[\delta x - 2\log(2 + \delta x)]}e^{\frac{2}{\delta^2}[2\log(2 + \delta \abs{\zeta}) - \delta \abs{\zeta} ]} \Big[1 + \frac{2 + \delta \abs{\zeta}}{2\zeta^2}   \Big] \\
\leq &\ \frac{1}{\mu} + \frac{1}{\mu \zeta^2} + \frac{\delta}{2\mu \abs{\zeta}},
\end{align*}
where the inequality follows from \eqref{eq:monotone}. Now fix $x \geq -\zeta$, then
\begin{align*}
\frac{1}{p(x)} \int_{x}^{\infty} \frac{2\abs{y}}{a(y)}p(y) dy =&\ e^{\frac{2\abs{\zeta}}{ 2 + \abs{\zeta}\delta}x}\int_{x}^{\infty} \frac{2y}{\mu ( 2 + \abs{\zeta}\delta)}e^{-\frac{2\abs{\zeta}}{ 2 + \abs{\zeta}\delta}y} dy\notag \\
=&\ \frac{1}{\mu \abs{\zeta} } e^{\frac{2\abs{\zeta}}{ 2 + \abs{\zeta}\delta}x}\Big[xe^{-\frac{2\abs{\zeta}}{ 2 + \abs{\zeta}\delta}x} + \frac{2 + \delta \abs{\zeta}}{2\abs{\zeta}} e^{-\frac{2\abs{\zeta}}{ 2 + \abs{\zeta}\delta} x}  \Big] \\
=&\  \frac{x}{\mu \abs{\zeta} } + \frac{1}{\mu \zeta^2} + \frac{\delta}{2\mu \abs{\zeta}}.
\end{align*}
This proves \eqref{eq:fbound4}, and we move on to prove \eqref{eq:fbound5}. Recall the form of $r(x)$ from \eqref{eq:rform}. Fix $x \leq -1/\delta$, then \eqref{eq:normcdfbound} implies that 
\begin{align*}
\frac{\abs{r(x)}}{p(x)}\int_{-\infty}^{x}\frac{2}{a(y)} p(y)dy = 2xe^{x^2} \int_{-\infty}^{x}\frac{2}{\mu } e^{-y^2}dy \leq \frac{2}{\mu}.
\end{align*}
For $x \in [-1/\delta,0]$, 
\begin{align*}
\frac{\abs{r(x)}}{p(x)}\int_{-\infty}^{x}\frac{2}{a(y)} p(y)dy =&\ \frac{2\abs{x}}{2 + \delta x} e^{\frac{3}{\delta^2}} e^{\frac{2}{\delta^2}[\delta x - 2 \log(2 + \delta x)] } \int_{-\infty}^{-1/\delta}\frac{2}{\mu} e^{-y^2}dy\\
&+ \frac{2\abs{x}}{2 + \delta x} e^{\frac{2}{\delta^2}[\delta x - 2 \log(2 + \delta x)] } \int_{-1/\delta}^{x}\frac{2}{\mu (2 + \delta y)} e^{\frac{2}{\delta^2}[2 \log(2 + \delta y) - \delta y] }dy\\
\leq&\ \frac{2\abs{x}}{2 + \delta x} e^{\frac{3}{\delta^2}}e^{\frac{2}{\delta^2}[\delta x - 2 \log(2 + \delta x)] } \int_{-\infty}^{-1/\delta}\frac{-y}{\abs{x}}\frac{2}{\mu}  e^{-y^2}dy\\
&+ \frac{2\abs{x}}{2 + \delta x} e^{\frac{2}{\delta^2}[\delta x - 2 \log(2 + \delta x)] } \int_{-1/\delta}^{x}\frac{1}{\abs{x}} \frac{-2y}{\mu (2 + \delta y)} e^{\frac{2}{\delta^2}[2 \log(2 + \delta y) - \delta y] }dy \\
=&\ \frac{2}{2 + \delta x} e^{\frac{3}{\delta^2}}e^{\frac{2}{\delta^2}[\delta x - 2 \log(2 + \delta x)] } \Big[ \frac{1}{\mu } e^{-y^2}\Big|_{-\infty}^{-1/\delta} \Big]\\
&+\frac{2}{2 + \delta x} e^{\frac{2}{\delta^2}[\delta x - 2 \log(2 + \delta x)] } \Big[\frac{1}{\mu } e^{\frac{2}{\delta^2}[2 \log(2 + \delta y) - \delta y]} \Big|_{-1/\delta}^{x} \Big],
\end{align*}
where in the last equality we integrated the second term just like in \eqref{eq:integral}. Therefore, for $x \in [-1/\delta, 0]$, 
\begin{align*}
\frac{\abs{r(x)}}{p(x)}\int_{-\infty}^{x}\frac{2}{a(y)} p(y)dy \leq&\ \frac{1}{\mu }\frac{2}{2 + \delta x} e^{\frac{2}{\delta^2}}e^{\frac{2}{\delta^2}[\delta x - 2 \log(2 + \delta x)] }\\
&+\frac{1}{\mu }\frac{2}{2 + \delta x} e^{\frac{2}{\delta^2}[\delta x - 2 \log(2 + \delta x)] } \Big[ e^{\frac{2}{\delta^2}[2 \log(2 + \delta x) - \delta x]} - e^{\frac{2}{\delta^2}}\Big]\\
\leq&\ \frac{2}{\mu }.
\end{align*}
This proves \eqref{eq:fbound5} and we now prove \eqref{eq:fbound6}. Fix $x \in [0,-\zeta]$, then
\begin{align*}
&\ \frac{\abs{r(x)}}{p(x)}\int_{x}^{\infty}\frac{2}{a(y)} p(y)dy \\
=&\ \frac{2x}{2 + \delta x} e^{ \frac{2}{\delta^2}(2\log(2 + \delta \abs{\zeta}) - \delta \abs{\zeta} )}e^{\frac{2\zeta^2}{2 + \delta \abs{\zeta}} }e^{\frac{2}{\delta^2}[\delta x - 2 \log(2 + \delta x)] } \int_{-\zeta}^{\infty}\frac{2}{\mu(2 + \delta \abs{\zeta})} e^{\frac{-2\abs{\zeta}}{2 + \delta \abs{\zeta}}y}dy\\
&+ \frac{2x}{2 + \delta x} e^{\frac{2}{\delta^2}[\delta x - 2 \log(2 + \delta x)] } \int_{x}^{-\zeta}\frac{2}{\mu (2 + \delta y)} e^{\frac{2}{\delta^2}[2 \log(2 + \delta y) - \delta y] }dy.
\end{align*}
Using \eqref{eq:monotone}, we see that

\begin{align*}
\frac{\abs{r(x)}}{p(x)}\int_{x}^{\infty}\frac{2}{a(y)} p(y)dy \leq&\ \frac{2x}{2 + \delta x} e^{\frac{2\zeta^2}{2 + \delta \abs{\zeta}} } \int_{-\zeta}^{\infty}\frac{2}{\mu(2 + \delta \abs{\zeta})} e^{\frac{-2\abs{\zeta}}{2 + \delta \abs{\zeta}}y}dy\\
&+ \frac{2x}{2 + \delta x} e^{\frac{2}{\delta^2}[\delta x - 2 \log(2 + \delta x)] } \int_{x}^{-\zeta}\frac{2}{\mu (2 + \delta y)} e^{\frac{2}{\delta^2}[2 \log(2 + \delta y) - \delta y] }dy\\
\leq&\  \frac{2x}{2 + \delta x}  \frac{1}{\mu\abs{\zeta}} \\
&+ \frac{2x}{2 + \delta x} e^{\frac{2}{\delta^2}[\delta x - 2 \log(2 + \delta x)] } \int_{x}^{-\zeta}\frac{y}{x} \frac{2}{\mu (2 + \delta y)} e^{\frac{2}{\delta^2}[2 \log(2 + \delta y) - \delta y] }dy\\
=&\  \frac{2x}{2 + \delta x}\frac{1}{\mu \abs{\zeta}} + \frac{1}{\mu }\frac{2}{2 + \delta x} e^{\frac{2}{\delta^2}[\delta x - 2 \log(2 + \delta x)] } \Big[e^{\frac{2}{\delta^2}[2 \log(2 + \delta y) - \delta y]} \Big|_{x}^{-\zeta} \Big],
\end{align*}
where in the last equality we solved the integral just like in \eqref{eq:integral}. Using \eqref{eq:monotone}, we conclude that for $x \in [0,-\zeta]$, 
\begin{align*}
\frac{\abs{r(x)}}{p(x)}\int_{x}^{\infty}\frac{2}{a(y)} p(y)dy \leq&\ \frac{2x}{2 + \delta x}\frac{1}{\mu \abs{\zeta}} + \frac{1}{\mu }\frac{2}{2 + \delta x} \\
\leq&\  \frac{2}{2 + \delta x}\frac{1}{\mu } + \frac{1}{\mu }\frac{2}{2 + \delta x} \leq \frac{2}{\mu }.
\end{align*}
This proves \eqref{eq:fbound6} in the case when $x \in [0,-\zeta]$. We now prove the remaining part of \eqref{eq:fbound6}. Fix $x \geq -\zeta$, and observe that
\begin{align*}
\frac{\abs{r(x)}}{p(x)}\int_{x}^{\infty}\frac{2}{a(y)} p(y)dy =&\ \frac{2\abs{\zeta}}{2 + \delta \abs{\zeta}} e^{\frac{2\abs{\zeta}}{2 + \delta \abs{\zeta}}x} \int_{x}^{\infty}\frac{2}{\mu(2 + \delta \abs{\zeta})} e^{\frac{-2\abs{\zeta}}{2 + \delta \abs{\zeta}}y}dy \leq \frac{2}{\mu }.
\end{align*}
This proves \eqref{eq:fbound6}, and we move on to verify \eqref{eq:fbound7}. Consider the Lyapunov function $V(x) = x^2$, and recall the form of $G_Y$ from \eqref{eq:GY} to see that 
\begin{align*}
G_Y V(x) =&\ 2x\mu (\zeta + (x + \zeta)^-) + 2\mu \Big(1 + 1(x > -1/\delta)\big(1 -  \delta(\zeta + (x + \zeta)^-)\big)\Big).
\end{align*}
Now when $x < -\zeta$, 
\begin{align*}
G_Y V(x) =&\ -2\mu x^2 + 2\mu \big(1 + 1(x > -1/\delta)(1 +\delta x) \big)\\
\leq&\ -2\mu x^2  + 2\mu \delta x 1\big( x \in [0, -\zeta)\big)+ 4\mu \\
=&\ -2\mu x^2 1(x < 0) - 2\mu \big(x^2 - \delta x \big) 1\big( x \in [0, -\zeta)\big) + 4\mu \\
\leq&\ -2\mu x^2 1(x < 0) - \mu \big(x^2 - \delta^2 \big) 1\big( x \in [0, -\zeta)\big) + 4\mu \\
\leq&\ -2\mu x^2 1(x < 0) - \mu x^2 1\big( x \in [0, -\zeta)\big) + \mu \delta^2 + 4\mu,
\end{align*}
and when $x \geq -\zeta$, 
\begin{align*}
G_Y V(x) =&\ - 2x\mu \abs{\zeta} + 2 \delta \mu \abs{\zeta} + 4\mu \\
=&\ - 2\mu \abs{\zeta} ( x - \delta) 1( \abs{\zeta} < \delta) - 2\mu \abs{\zeta} ( x - \delta) 1( \abs{\zeta} \geq \delta)  + 4\mu\\
\leq&\ - 2\mu \abs{\zeta}x 1( \abs{\zeta} < \delta)   + 2\mu \delta^21( \abs{\zeta} < \delta) - 2\mu \abs{\zeta} ( x - \delta) 1( \abs{\zeta} \geq \delta) +  4\mu.
\end{align*}
Therefore, 
\begin{align*}
G_Y V(x) \leq&\ -2\mu x^2 1(x < 0) -\mu x^2 1(x \in [0,-\zeta)) \\
  & - 2\mu \abs{\zeta}x 1( \abs{\zeta} < \delta)1(x \geq -\zeta) - 2\mu \abs{\zeta} ( x - \delta) 1( \abs{\zeta} \geq \delta)1(x \geq -\zeta)\\
  & + 2\mu \delta^21( \abs{\zeta} < \delta) 1(x \geq -\zeta)  + \mu \delta^2 1(x < -\zeta) + 4\mu,
\end{align*}
i.e.\ $G_Y V(x)$ satisfies 
\begin{align*}
G_Y V(x) \leq -f(x) + g(x),
\end{align*}
where $f(x)$ and $g(x)$ are functions from $\R \to \R_+$. By the standard Foster-Lyapunov condition (see for example \cite[Theorem 4.3]{MeynTwee1993b}), this implies that
\begin{align*}
\EE f(Y(\infty)) \leq \EE g(Y(\infty)),
\end{align*}
or
\begin{align*}
&\ 2\EE \big[ (Y(\infty))^2 1(Y(\infty) < 0)\big] + \EE \big[ (Y(\infty))^2 1(Y(\infty) \in [0,-\zeta))\big] \\
&+ 2\abs{\zeta} \EE \big[Y(\infty)1(Y(\infty) \geq -\zeta)\big] 1( \abs{\zeta} < \delta)\\
&+ 2\abs{\zeta} \EE \big[(Y(\infty)-\delta )1(Y(\infty) \geq -\zeta)\big] 1(\abs{\zeta} \geq \delta) \\
 \leq&\ 2\delta^2 + 4,
\end{align*}
from which we can see that
\begin{align*}
\EE \big[Y(\infty)1(Y(\infty) \geq -\zeta)\big] \leq&\ \frac{\delta^2}{\abs{\zeta}} +\frac{2}{\abs{\zeta}} +  \delta.
\end{align*}
Furthermore, by invoking Jensen's inequality we see that
\begin{align*}
\EE \Big[ \big|Y(\infty) 1(Y(\infty) < 0)\big|\Big] \leq&\  \sqrt{\EE \big[ (Y(\infty))^2 1(Y(\infty) < 0)\big]} \\
\leq&\ \sqrt{\delta^2 + 2}, \\
\EE \Big[ \big|Y(\infty) 1(Y(\infty) \in [0,-\zeta))\big|\Big] \leq&\  \sqrt{\EE \big[ (Y(\infty))^2 1(Y(\infty) \in [0,-\zeta))\big]} \\
\leq&\ \sqrt{2\delta^2 + 4}.
\end{align*}
Hence 
\begin{align*}
\EE \big[ \big|Y(\infty)\big|\big] =&\ \EE \Big[ \big|Y(\infty) 1(Y(\infty) < 0)\big|\Big] + \EE \Big[ \big|Y(\infty) 1(Y(\infty) \in [0,-\zeta))\big|\Big] \\
&+ \EE \big[Y(\infty)1(Y(\infty) \geq -\zeta)\big]\\
\leq&\ \sqrt{\delta^2 + 2} + \sqrt{2\delta^2 + 4} + \frac{2+\delta^2}{\abs{\zeta}} + \delta.
\end{align*}
This proves \eqref{eq:fbound7} and concludes the proof of this lemma.
\end{proof}

\section{Proof of Lemma~\ref{lem:eterm}}
\label{app:eterm}
This section is devoted to proving Lemma~\ref{lem:eterm}.  In this entire section, we reserve the variable $x$ to be of the form $x = x_k = \delta (k - R)$, where $k \in \Z_+$. The form of $f_h'''(x)$ in \eqref{eq:hf3} implies that for any $y \in \R$, 
\begin{align}
&\ \abs{f_h'''(y)-f_h'''(x-)} \notag \\
\leq&\ \abs{r'(y)-r'(x-)} \abs{f_h'(y)} + \abs{r'(x-)}\abs{f_h'(x)-f_h'(y)}  \notag \\
&+ \abs{r(y)-r(x)} \abs{f_h''(y)} + \abs{r(x)}\abs{f_h''(x)-f_h''(y)}  \notag \\
&+ \abs{2/a(x) - 2/a(y)} \abs{h'(y)} + \abs{2/a(x)}\abs{h'(x-)-h'(y)} \notag \\
&+ \abs{\frac{2a'(x-)}{a^2(x)} - \frac{2a'(y)}{a^2(y)}} \Big(\abs{h(y)} + \abs{\EE h(Y(\infty))}\Big) + \abs{\frac{2a'(x-)}{a^2(x)}}\abs{h(x)-h(y)}. \label{eq:lipf}
\end{align}
We first state a few auxiliary lemmas that will help us prove Lemma~\ref{lem:eterm}. These lemmas are proved at the end of this section. The first lemma deals with the case when $y \in (x-\delta, x)$.

\begin{lemma} \label{lem:llminus}
Fix $h(x) \in W_2$ with $h(0)=0$, and let $f_h(x)$ be a solution to the Poisson equation \eqref{eq:poisson} that satisfies the conditions of Lemma~\ref{lem:gb}. Recall that $a(x)$ and $r(x)$ are given by \eqref{eq:adef} and \eqref{eq:rform}, respectively. Then there exists a constant $C>0$ (independent of $\lambda, n$, and $\mu$), such that for all $x = x_k = \delta (k - R)$ with $k \in \Z_+$, all $y \in (x-\delta, x)$, and all $n \geq 1, \lambda > 0$, and $\mu > 0$ satisfying $1 \leq R < n $,
\begin{align}
&\ \abs{r'(y)-r'(x-)}\abs{f_h'(y)} + \abs{r'(x-)}\abs{f_h'(x)-f_h'(y)} \leq \frac{C\delta}{\mu }  \Big(1 +  \frac{1}{\abs{\zeta}}\Big)1(x \leq -\zeta), \label{eq:ebound1} \\
&\ \abs{r(y)-r(x)} \abs{f_h''(y)} + \abs{r(x)}\abs{f_h''(x)-f_h''(y)} \notag \\
&\hspace{0.5cm} \leq \frac{C\delta }{\mu }\bigg[(1+\abs{x})\Big(1 +  \frac{1}{\abs{\zeta}}\Big)  1( x\leq -\zeta) + \abs{\zeta}1(x \geq -\zeta + \delta) \bigg], \label{eq:ebound2} \\
&\ \abs{2/a(x) - 2/a(y)} \abs{h'(y)} + \abs{2/a(x)}\abs{h'(x-)-h'(y)}\leq  \frac{C\delta}{\mu}, \label{eq:ebound3} \\
&\ \abs{\frac{2a'(x-)}{a^2(x)} - \frac{2a'(y)}{a^2(y)}} \abs{\EE h(Y(\infty))} +  \abs{\frac{2a'(x-)}{a^2(x)}}\abs{h(x)-h(y)} \notag \\
&\hspace{0.5cm} \leq \frac{C\delta}{\mu }\Big(1 + \frac{1}{\abs{\zeta}} \Big) 1(x \in [-1/\delta + \delta, -\zeta]) \label{eq:ebound4}\\
&\ \abs{\frac{2a'(x-)}{a^2(x)} - \frac{2a'(y)}{a^2(y)}} \abs{h(y)} \leq \frac{C\delta}{\mu } 1(x \in [-1/\delta + \delta, -\zeta])  \label{eq:ebound5}
\end{align}
\end{lemma}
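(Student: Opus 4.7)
The plan is to bound each of the five inequalities by exploiting the piecewise structure of $r$, $r'$, $a$, and $a'$, which are smooth on each of the three pieces $(-\infty, -1/\delta]$, $(-1/\delta, -\zeta]$, and $(-\zeta, \infty)$, with $r$ and $a$ continuous at the two breakpoints $-1/\delta$ and $-\zeta$. Since $y \in (x-\delta, x)$ and $x$ lies on the grid $x_k = \delta(k-R)$, and since the grid points $x_0 = -1/\delta$ and $x_n = -\zeta$ coincide with the two breakpoints, the interval $(x-\delta, x)$ always lies entirely within a single piece; when $x$ itself is a breakpoint, the left limits $r'(x-)$ and $a'(x-)$ are taken from the same piece that contains $y$. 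This reduces the argument to three regimes: $x \leq -1/\delta$, $x \in [-1/\delta + \delta, -\zeta]$, and $x \geq -\zeta + \delta$. Throughout I would use the facts that $h \in W_2$ with $h(0) = 0$ gives $\abs{h(y)} \leq \abs{y}$, $\abs{h'(y)} \leq 1$, and $\abs{h'(x)-h'(y)} \leq \abs{x-y}$.

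For \eqref{eq:ebound1}, I would use that $r'$ is constant on the outer pieces and that on the middle piece $\abs{r''(x)} = 8\delta/(2+\delta x)^3 \leq 8\delta$, so $r'$ is $8\delta$-Lipschitz there; together with $\abs{r'(x-)} \leq 4$, the gradient bound $\abs{f_h'(y)} \leq (C/\mu)(1+1/\abs{\zeta})$ and $\abs{f_h''(y)} \leq (C/\mu)(1+1/\abs{\zeta})$ for $y \leq -\zeta$ from Lemma~\ref{lem:gb}, both summands are $O((\delta/\mu)(1+1/\abs{\zeta}))$ when $x \leq -\zeta$ and vanish when $x \geq -\zeta+\delta$. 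The argument for \eqref{eq:ebound2} is parallel: $r$ is globally $4$-Lipschitz so $\abs{r(y)-r(x)} \leq 4\delta$, $r$ is constant on $(-\zeta, \infty)$, and $\abs{f_h''(x)-f_h''(y)} \leq \delta \sup \abs{f_h'''}$; combining with $\abs{r(x)} \leq 2\abs{x}$ for $x \leq -\zeta$ and $\abs{r(x)} \leq \abs{\zeta}$ for $x \geq -\zeta$ produces the factor $(1+\abs{x})(1+1/\abs{\zeta})$ in the first regime and $\abs{\zeta}$ in the third.

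For \eqref{eq:ebound3}, combining $1/a \leq 1/\mu$ with the global Lipschitz bound $\abs{a(x)-a(y)} \leq \mu\delta\abs{x-y}$ (since $\abs{a'} \leq \mu\delta$ with the left-derivative convention) and $\abs{h'(x)-h'(y)} \leq \abs{x-y} \leq \delta$ immediately yields both summands of order $\delta/\mu$. For \eqref{eq:ebound4} and \eqref{eq:ebound5}, I would first observe that the relevant quantities vanish unless $x \in [-1/\delta+\delta, -\zeta]$, since that is precisely the regime in which $a'(x-) = a'(y) = \mu\delta$ for every $y \in (x-\delta, x)$. In that regime, writing $g(x) \equiv 2a'(x-)/a^2(x) = 2\delta/[\mu(2+\delta x)^2]$, one has $\abs{g'(x)} \leq 4\delta^2/\mu$, whence $\abs{g(x)-g(y)} \leq 4\delta^3/\mu$; combining this with $\abs{\EE h(Y(\infty))} \leq C(1+1/\abs{\zeta})$ from \eqref{eq:fbound7} and the estimate $\abs{g(x)}\abs{h(x)-h(y)} \leq (2\delta/\mu)\delta$ closes \eqref{eq:ebound4}.

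The hardest step is \eqref{eq:ebound5}: the naive product $(4\delta^3/\mu)\abs{h(y)} \leq (4\delta^3/\mu)\abs{y}$ reaches $\delta^3\abs{\zeta}/\mu$, which is not uniformly $O(\delta/\mu)$. The remedy is to keep the denominator visible: the sharper estimate $\abs{g(x)-g(y)} \leq 4\delta^3/[\mu(2+\delta y)^3]$ combined with $\abs{h(y)} \leq \abs{y}$ reduces the $y \geq 0$ case, after the substitution $v = \delta y$, to bounding $v/(2+v)^3$, whose maximum on $[0,\infty)$ equals $1/27$, giving an $O(\delta^2/\mu)$ bound. For $y \in (-1/\delta, 0)$, the crude bound $\abs{y} \leq 1/\delta$ together with $(2+\delta y)^3 \geq 1$ again yields $O(\delta^2/\mu) = O(\delta/\mu)$. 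The residual bookkeeping at the breakpoint grid points $x_0, x_1, x_n, x_{n+1}$, using the left-derivative convention together with the continuity of $a$ and $r$, is tedious but routine.
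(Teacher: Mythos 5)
Your proof is correct and follows essentially the same route as the paper's: reduce to the three piecewise-smooth regimes for $r$, $r'$, $a$, $a'$ (noting that the grid points include the breakpoints, that $y<x$, and that the left-limit convention is used, so $(x-\delta,x)$ never straddles a breakpoint), then bound each summand using the second-derivative or Lipschitz moduli of $r$, $r'$, $a$, $a'$ together with the gradient bounds of Lemma~\ref{lem:gb} and the moment bound \eqref{eq:fbound7}. The only variation worth noting is in \eqref{eq:ebound5}, where you maximize $v/(2+v)^3$ after substituting $v=\delta y$, whereas the paper writes $\delta y/(2+\delta u)=\delta(y-u)/(2+\delta u)+\delta u/(2+\delta u)\le\delta^2+1$; both are equally valid.
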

The second lemma deals with the case when $y \in (x, x+\delta)$.
\begin{lemma} \label{lem:llplus}
Consider the same setup as in Lemma~\ref{lem:llminus}, but this time let $y \in (x, x + \delta)$. Then
\begin{align}
&\ \abs{r'(y)-r'(x-)}\abs{f_h'(y)} + \abs{r'(x-)}\abs{f_h'(x)-f_h'(y)} \notag \\
& \hspace{0.5cm} \leq \frac{C\delta}{\mu } \Big[ \Big(1 +  \frac{1}{\abs{\zeta}}\Big) 1(x \leq -\zeta-\delta)+ \frac{1}{\delta} \Big(1 +  \frac{1}{\abs{\zeta}}\Big)1(x\in \{-1/\delta, -\zeta\}) \Big], \label{eq:epbound1} \\
&\ \abs{r(y)-r(x)} \abs{f_h''(y)} + \abs{r(x)}\abs{f_h''(x)-f_h''(y)} \notag \\
&\hspace{0.5cm} \leq \frac{C\delta }{\mu }\bigg[(1+\abs{x})\Big(1 +  \frac{1}{\abs{\zeta}}\Big)  1( x\leq -\zeta-\delta) + \abs{\zeta}1(x \geq -\zeta ) \bigg], \label{eq:epbound2} \\
&\ \abs{2/a(x) - 2/a(y)} \abs{h'(y)} + \abs{2/a(x)}\abs{h'(x-)-h'(y)}\leq  \frac{C\delta}{\mu}, \label{eq:epbound3} \\
&\ \abs{\frac{2a'(x-)}{a^2(x)} - \frac{2a'(y)}{a^2(y)}} \abs{\EE h(Y(\infty))} +  \abs{\frac{2a'(x-)}{a^2(x)}}\abs{h(x)-h(y)} \notag \\
&\hspace{0.5cm} \leq \frac{C\delta}{\mu }\Big(1 + \frac{1}{\abs{\zeta}} \Big)1(x \in [-1/\delta, -\zeta]) \label{eq:epbound4}\\
&\ \abs{\frac{2a'(x-)}{a^2(x)} - \frac{2a'(y)}{a^2(y)}} \abs{h(y)} \leq \frac{C\delta}{\mu} \Big[ 1(x \in [-1/\delta + \delta, -\zeta - \delta]) + \frac{1}{\delta} 1(x \in \{-1/\delta, -\zeta\})\Big]  \label{eq:epbound5}
\end{align}
\end{lemma}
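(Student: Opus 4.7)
The plan is to mirror the proof of Lemma~\ref{lem:llminus} inequality-by-inequality, with careful accounting of the discontinuities of $r(x)$, $r'(x)$, $a(x)$, $a'(x)$, and $f_h'''(x)$ at the points $-1/\delta$ and $-\zeta$. The key structural difference is that, because $y \in (x, x+\delta)$ lies to the right of $x$, the interval $[x,y]$ now crosses a jump point of $r'$ or $a'$ exactly when $x \in \{-1/\delta, -\zeta\}$, and this is precisely the origin of the extra $\frac{1}{\delta}$ indicator terms in \eqref{eq:epbound1} and \eqref{eq:epbound5}. Likewise, the indicator $1(x \le -\zeta)$ from Lemma~\ref{lem:llminus} gets tightened to $1(x \le -\zeta - \delta)$, while $1(x \ge -\zeta+\delta)$ becomes $1(x \ge -\zeta)$, since it is the location of $y$ relative to $-\zeta$ that matters.

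In each case I would split the real line into the regimes $x \le -1/\delta -\delta$, $x \in (-1/\delta-\delta, -\zeta-\delta]$, $x \in \{-1/\delta, -\zeta\}$, and $x \ge -\zeta+\delta$, handling the two singleton cases separately. For \eqref{eq:epbound1}, away from the singletons $r'$ is piecewise $C^1$ with bounded derivative, so $\abs{r'(y)-r'(x-)} = O(\delta)$ and $\abs{f_h'(x)-f_h'(y)}\le \delta \sup_{[x,y]} \abs{f_h''}$; combined with the gradient bounds \eqref{eq:WCder1}--\eqref{eq:WCder2} of Lemma~\ref{lem:gb} this gives the $O(\delta/\mu)(1+1/\abs{\zeta})$ term. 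At $x = -1/\delta$ or $x = -\zeta$, the jump in $r'$ is only $O(1)$, which multiplied by $\abs{f_h'(y)} \le C/\mu(1+1/\abs{\zeta})$ produces the anomalous $\frac{C}{\mu}(1 + 1/\abs{\zeta})$, rewritten as $\frac{C\delta}{\mu}\cdot\frac{1}{\delta}(1+1/\abs{\zeta})$. For \eqref{eq:epbound2}, $r(x)$ is globally continuous and piecewise $C^1$, so $\abs{r(y)-r(x)} = O(\delta)$ everywhere, and $\abs{f_h''(x)-f_h''(y)} \le \delta \sup \abs{f_h'''}$ via \eqref{eq:WCder3}; Lemma~\ref{lem:gb} then yields \eqref{eq:epbound2} directly, noting the indicator shift because the value $\abs{\zeta}$ on the right pertains to $y \ge -\zeta$ which happens already when $x \ge -\zeta$. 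For \eqref{eq:epbound3}, $a(\cdot)$ is globally Lipschitz with constant $\mu\delta$ and bounded below by $\mu$, so $\abs{2/a(x)-2/a(y)}\le 2\delta/\mu$, and $\abs{h'(x-)-h'(y)}\le \delta$ since $h'\in\lipone$, giving $C\delta/\mu$ unconditionally. For \eqref{eq:epbound4}, on the region where $x, y$ both lie in $(-1/\delta,-\zeta]$ the value $a'\equiv \mu\delta$ is constant so $\abs{a'(x-)/a^2(x) - a'(y)/a^2(y)} = O(\delta^3/\mu)$; at the singleton points the difference is only $O(\delta/\mu)$, but it is compensated by $\abs{h(x)-h(y)}\le \delta$ and by $\abs{\EE h(Y(\infty))}\le \EE \abs{Y(\infty)} \le C(1 + 1/\abs{\zeta})$ from \eqref{eq:fbound7}.

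The main obstacle, and the reason \eqref{eq:epbound5} must be stated separately from \eqref{eq:epbound4}, is that the factor $\abs{h(y)}$ cannot be shrunk to $O(\delta)$ the way $\abs{h(x) - h(y)}$ can; at a singleton $x \in \{-1/\delta, -\zeta\}$ we have $\abs{a'(x-)/a^2(x) - a'(y)/a^2(y)} \le \delta/\mu$ and $\abs{h(y)} \le \abs{y} \le \abs{x}+\delta$, so when $x = -1/\delta$ the product is of order $1/\mu$, which must be absorbed as the $\frac{C\delta}{\mu}\cdot\frac{1}{\delta}$ term in the bound. For $x$ strictly inside $[-1/\delta+\delta, -\zeta-\delta]$, $a'$ is constant on $[x,y]$, so the cubic gain $\abs{a'(x-)/a^2(x) - a'(y)/a^2(y)} \le C\delta^3/\mu$ easily dominates the linear growth of $\abs{h(y)} \le 1/\delta$, yielding the $C\delta^2/\mu \le C\delta/\mu$ bound. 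Outside $[-1/\delta, -\zeta]$ (up to boundary offsets) $a'$ vanishes identically on $[x,y]$ and the left-hand side is zero, completing the case analysis.
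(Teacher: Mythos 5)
Your overall plan is the right one and agrees with the paper's proof: mirror the case decomposition of Lemma~\ref{lem:llminus}, with the indicator shifts $1(x\le -\zeta)\mapsto 1(x\le -\zeta-\delta)$ and $1(x\ge -\zeta+\delta)\mapsto 1(x\ge -\zeta)$, and treat the grid points $x\in\{-1/\delta,-\zeta\}$ separately because the interval $(x,x+\delta)$ now straddles a jump of $r'$, $a'$. Your handling of \eqref{eq:epbound1}--\eqref{eq:epbound4} is essentially correct (with the small caveat that at $x=-\zeta$ your asserted bound $\abs{f_h'(y)}\le C\mu^{-1}(1+1/\abs{\zeta})$ is not a direct consequence of \eqref{eq:WCder1} applied at $y>-\zeta$, which would produce a $1/\zeta^2$ term; as in the paper it requires writing $f_h'(y)=f_h'(x)+\int_x^y f_h''$, using \eqref{eq:WCder1} at $x=-\zeta$ in the region $x\le -\zeta$ and \eqref{eq:WCder2} for the integral).

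There is, however, a genuine gap in your argument for \eqref{eq:epbound5}, in both the interior region and the $x=-\zeta$ singleton. In the interior region you write $\abs{h(y)}\le 1/\delta$, but this is false whenever $\abs{\zeta}>1/\delta$, i.e.\ whenever $n>2R$, which the hypotheses of the lemma allow; one only has $\abs{h(y)}\le \abs{y}$ with $\abs{y}$ as large as $\abs{\zeta}$. Using the correct $\abs{h(y)}\le\abs{\zeta}$ together with your crude $\abs{a'(x-)/a^2(x)-a'(y)/a^2(y)}\le C\delta^3/\mu$ yields $C\delta^3\abs{\zeta}/\mu=C\delta^2\cdot(\delta\abs{\zeta})/\mu$, and $\delta\abs{\zeta}=(n-R)/R$ is unbounded, so this does not give $C\delta/\mu$. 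At $x=-\zeta$ you compute only the $x=-1/\delta$ case explicitly; for $x=-\zeta$ your proposed bound $\abs{a'(x-)/a^2(x)-a'(y)/a^2(y)}\le\delta/\mu$ discards the factor $(2+\delta\abs{\zeta})^{-2}$, and multiplying by $\abs{h(y)}\le\abs{\zeta}+\delta$ gives $\delta(\abs{\zeta}+\delta)/\mu$, which again blows up with $\delta\abs{\zeta}$. Both failures come from the same source: your crude intermediate bounds throw away the $(2+\delta u)^{-k}$ denominators that are needed to tame $\abs{y}$. The paper's proof keeps them, writing in the interior $\abs{y}\abs{(2a'/a^2)'(u)}=\frac{4\delta}{\mu(2+\delta u)^2}\cdot\frac{\abs{\delta y}}{2+\delta u}$ and noting $\frac{\abs{\delta y}}{2+\delta u}\le 1+\delta^2$ (this is exactly the content of \eqref{eq:ap1}), and at $x=-\zeta$ writing $\abs{\frac{2a'(x-)}{a^2(x-)}}\abs{y}\le\frac{2}{\abs{a(x-)}}\cdot\frac{(\abs{x}+\delta)\abs{a'(x-)}}{\abs{a(x-)}}$ and bounding the second factor by $\eqref{eq:ap1}$. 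Without this normalization your estimate of \eqref{eq:epbound5} does not close.
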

With these two lemmas, the proof of Lemma~\ref{lem:eterm} becomes trivial.
\begin{proof}[Proof of Lemma~\ref{lem:eterm}]
When $y \in (x-\delta, x)$, we just apply \eqref{eq:ebound1}--\eqref{eq:ebound5} from Lemma~\ref{lem:llminus} to \eqref{eq:lipf} to get \eqref{eq:eboundleft}. Similarly, for $y \in (x,x+\delta)$ we apply \eqref{eq:epbound1}--\eqref{eq:epbound5} of Lemma~\ref{lem:llplus} to \eqref{eq:lipf} to get \eqref{eq:eboundright}. This concludes the proof of Lemma~\ref{lem:eterm}.
\end{proof}

\subsection{Proof of Lemma~\ref{lem:llminus}}
\begin{proof}[Proof of Lemma~\ref{lem:llminus}]
Fix $k \in \Z_+$, let $x = x_k = \delta(k - R)$, and fix $y \in (x-\delta, x)$. Throughout the proof we use $C> 0$ to denote a generic constant that may change from line to line, but does not depend on $\lambda, n$, and $\mu$. To prove this lemma we verify \eqref{eq:ebound1}--\eqref{eq:ebound5}, starting with \eqref{eq:ebound1}. Using the form of $r'(x)$ in \eqref{eq:rform}, we see that
\begin{align*}
\abs{r'(y)-r'(x-)} = \abs{r'(y)-r'(x-)} 1(x \in [-1/\delta + \delta, -\zeta]).
\end{align*} 
Furthermore, $r''(u)$ exists for all $u \in (-1/\delta, -\zeta)$, and from \eqref{eq:rform} one can see that
\begin{align*}
r''(u) = \frac{8\delta}{(2 + \delta u)^3} \leq 8\delta, \quad u \in (-1/\delta, -\zeta).
\end{align*}
Therefore, 
\begin{align}
\abs{r'(y)-r'(x-)}\abs{f_h'(y)} \leq&\ \abs{f_h'(y)}1(x \in [-1/\delta + \delta, -\zeta]) \int_{x-\delta}^{x} \abs{r''(u)} du \notag\\
 \leq&\ \frac{C\delta^2}{\mu }\Big(1 +  \frac{1}{\abs{\zeta}}\Big) 1(x \in [-1/\delta + \delta, -\zeta ]), \label{inl:e1}
\end{align}
where in the last inequality we used the gradient bound \eqref{eq:WCder1}. Furthermore, we observe that
\begin{align*}
\abs{r'(x-)} \leq&\ 4 \times 1(x \leq -\zeta),\\
\abs{f_h'(x) - f_h'(y)} \leq&\ \int_{x-\delta}^{x} \abs{f_h''(u)} du \leq \frac{C\delta}{\mu }\Big[\Big(1 +  \frac{1}{\abs{\zeta}}\Big)1(x \leq -\zeta) + \frac{1}{\abs{\zeta}} 1(x \geq -\zeta+\delta)\Big],
\end{align*}
where in the first line we used the form of $r'(x)$ from \eqref{eq:rform}, and in the second line we used the gradient bound \eqref{eq:WCder2}. Recalling that $\delta \leq 1$, we conclude that
\begin{align*}
&\ \abs{r'(y)-r'(x-)}\abs{f_h'(y)} + \abs{r'(x-)}\abs{f_h'(x)-f_h'(y)} \leq \frac{C\delta}{\mu }  \Big(1 +  \frac{1}{\abs{\zeta}}\Big)1(x \leq -\zeta).
\end{align*}
This proves \eqref{eq:ebound1}, and we move on to show \eqref{eq:ebound2}. Observe that
\begin{align}
\abs{r(x)} \leq&\ 2\abs{x}1(x \leq -\zeta) + \abs{\zeta} 1(x \geq -\zeta+\delta), \notag \\
\abs{r(x) - r(y)} \leq&\ \int_{x-\delta}^{x} \abs{r'(u)} du \leq 4\delta 1(x \leq -\zeta), \notag \\
\abs{f_h''(y)} \leq&\ \frac{C}{\mu }\Big[\Big(1 +  \frac{1}{\abs{\zeta}}\Big)1(x \leq -\zeta) + \frac{1}{\abs{\zeta}} 1(x \geq -\zeta+\delta)\Big], \notag \\
\abs{f_h''(x) - f_h''(y)} \leq&\ \int_{x-\delta}^{x} \abs{f_h'''(u)} du \leq \frac{C\delta }{\mu }\Big[\Big(1 + \frac{1}{\abs{\zeta}}\Big)1(x \leq -\zeta) +  1(x \geq -\zeta+\delta)\Big], \label{inl:e2}
\end{align}
where the first two lines above are obtained using the form of $r(x)$ in \eqref{eq:rform}, and in the last two lines we used the gradient bounds \eqref{eq:WCder2} and \eqref{eq:WCder3}. Combining the bounds above proves \eqref{eq:ebound2}, and we move on to prove \eqref{eq:ebound3}. Observe that
\begin{align}
\abs{2/a(x)} \leq&\ 2/\mu, \notag \\
\abs{2/a(x) - 2/a(y)} \leq&\ 2\int_{x-\delta}^{x} \abs{\frac{a'(u)}{a^2(u)}} du \leq \frac{2\delta}{\mu} 1(x \in [-1/\delta + \delta, -\zeta]), \notag\\
\abs{h'(x-)} \leq&\ 1, \quad \text{ and } \quad \abs{h'(x-) - h'(y)} \leq \norm{h''} \abs{x-y} \leq \delta, \label{inl:e3}
\end{align}
where in the first two lines we used the forms of $a(x)$ and $a'(x)$ from \eqref{eq:adef} and \eqref{eq:ap}, and in the last line we used the fact that $h(x) \in W_2$. Combining these bounds proves \eqref{eq:ebound3}, and we move on to prove \eqref{eq:ebound4}. Observe that 
\begin{align*}
\abs{\frac{2a'(x-)}{a^2(x)}} =&\  \abs{\frac{2\delta}{\mu (2+\delta x)^2}} 1(x \in [-1/\delta + \delta, -\zeta]) \leq \frac{2\delta}{\mu}1(x \in [-1/\delta + \delta, -\zeta]),\\
\abs{\frac{2a'(y)}{a^2(y)}} \leq&\ \frac{2\delta}{\mu}1(x \in [-1/\delta + \delta, -\zeta]),\\
\abs{\EE h(Y(\infty))} \leq&\ \EE \big| Y(\infty)\big|, \quad \text{ and } \quad \abs{h(x)-h(y)} \leq \norm{h'} \abs{x-y} \leq \delta,
\end{align*} 
where in the first line we used the forms of $a(x)$ and $a'(x)$ from \eqref{eq:adef} and \eqref{eq:ap}, and in the last line we used the fact that $h(x) \in W_2$. We use the bounds above together with \eqref{eq:fbound7} and the fact that $\delta \leq 1$ to see that
\begin{align}
&\ \abs{\frac{2a'(x-)}{a^2(x)} - \frac{2a'(y)}{a^2(y)}} \abs{\EE h(Y(\infty))} + \abs{\frac{2a'(x-)}{a^2(x)}}\abs{h(x)-h(y)} \notag \\
\leq&\ \abs{\frac{2a'(x-)}{a^2(x)}}\EE \big| Y(\infty)\big| + \abs{\frac{2a'(y)}{a^2(y)}} \EE \big| Y(\infty)\big| + \frac{2\delta^2}{\mu}1(x \in [-1/\delta + \delta, -\zeta]) \notag \\
\leq&\ \frac{C\delta}{\mu}\Big(1 + \frac{1}{\abs{\zeta}} \Big)1(x \in [-1/\delta + \delta, -\zeta]) + \frac{2\delta^2}{\mu}1(x \in [-1/\delta + \delta, -\zeta]) \notag \\
\leq&\ \frac{C\delta}{\mu }\Big(1 + \frac{1}{\abs{\zeta}} \Big)1(x \in [-1/\delta + \delta, -\zeta]), \label{inl:e4}
\end{align}
which proves \eqref{eq:ebound4}. Lastly we show \eqref{eq:ebound5}. Observe that
\begin{align*}
\abs{\frac{2a'(x-)}{a^2(x)} - \frac{2a'(y)}{a^2(y)}} = \abs{\frac{2a'(x-)}{a^2(x)} - \frac{2a'(y)}{a^2(y)}} 1(x \in [-1/\delta + \delta, -\zeta]),
\end{align*} 
and that the derivative of $2a'(u-)/a^2(u)$ exists for all $u \in (-1/\delta, -\zeta)$ and satisfies 
\begin{align*}
\abs{\bigg(\frac{2a'(u)}{a^2(u)} \bigg)'} = \frac{4\delta^2}{\mu (2 + \delta u)^3}, \quad u \in (-1/\delta, -\zeta).
\end{align*}
Recalling that $\abs{h(y)} \leq \abs{y}$, we see that
\begin{align}
\abs{\frac{2a'(x-)}{a^2(x)} - \frac{2a'(y)}{a^2(y)}} \abs{h(y)} \leq&\ 1(x \in [-1/\delta + \delta, -\zeta]) \int_{x-\delta}^{x} \abs{y}\abs{\bigg(\frac{2a'(u)}{a^2(u)} \bigg)'} du  \notag \\
=&\ 1(x \in [-1/\delta + \delta, -\zeta]) \int_{x-\delta}^{x} \frac{4\delta}{\mu (2 + \delta u)^2} \abs{\frac{\delta y}{(2 + \delta u)}} du \notag  \\
\leq&\ 1(x \in [-1/\delta + \delta, -\zeta]) \frac{4\delta}{\mu }\int_{x-\delta}^{x}  \abs{\frac{\delta y}{ (2 + \delta u)}} du \notag \\
\leq&\ 1(x \in [-1/\delta + \delta, -\zeta]) \frac{4\delta}{\mu } \delta(\delta^2+1), \label{inl:e5}
\end{align}
where to obtain the last inequality, we used the fact that $\abs{y-u} \leq \delta$ and $\delta u \geq -1$ to see that
\begin{align*}
\abs{\frac{\delta y}{ (2 + \delta u)}} = \abs{\frac{\delta (y-u) + \delta u}{ (2 + \delta u)}} \leq \delta^2 + \abs{\frac{\delta u}{ 2 + \delta u}} \leq \delta^2 + 1.
\end{align*}
Recalling that $\delta \leq 1$ establishes \eqref{eq:ebound5}, and concludes the proof of this lemma.
\end{proof}

\subsection{Proof of Lemma~\ref{lem:llplus}}
\begin{proof}[Proof of Lemma~\ref{lem:llplus}]
Fix $k \in \Z_+$, let $x = x_k = \delta(k - R)$, and fix $y \in (x,x+\delta)$. Throughout the proof we use $C> 0$ to denote a generic constant that may change from line to line, but does not depend on $\lambda, n$, and $\mu$. The proof for this lemma is very similar to the proof of Lemma~\ref{lem:llminus}. In most cases, the only adjustment necessary to the proof is to consider cases when $x \leq -\zeta - \delta$ and $x \geq -\zeta$, instead of $x \leq -\zeta$ and $x \geq -\zeta + \delta$.
We now verify \eqref{eq:epbound1}--\eqref{eq:epbound5} in order, starting with \eqref{eq:epbound1}. Using the form of $r'(x)$ in \eqref{eq:rform}, we see that
\begin{align*}
\abs{r'(y)-r'(x-)} =&\ \abs{r'(y)-r'(x-)} 1(x \in [-1/\delta + \delta, -\zeta  - \delta]) \\
&+ (\abs{r'(y)}+2) 1(x = -1/\delta) + \abs{r'(x-)} 1(x = -\zeta).
\end{align*} 
Therefore, 
\begin{align*}
&\ \abs{r'(y)-r'(x-)}\abs{f_h'(y)}\\
=&\ \abs{r'(y)-r'(x-)} \abs{f_h'(y)} 1(x \in [-1/\delta + \delta, -\zeta  - \delta]) \\
&+ (\abs{r'(y)}+2)\abs{f_h'(y)}  1(x = -1/\delta) + \abs{r'(x-)} \abs{f_h'(y)}1(x = -\zeta) \\
 \leq&\ \frac{C\delta^2}{\mu }\Big(1 +  \frac{1}{\abs{\zeta}}\Big) 1(x \in [-1/\delta + \delta, -\zeta - \delta]) + \frac{C}{\mu }\Big(1 +  \frac{1}{\abs{\zeta}}\Big) 1(x = -1/\delta) \\
 &+ \abs{r'(x-)} \abs{f_h'(y)}1(x = -\zeta),
\end{align*}
where in the last inequality, the first term is obtained just like in \eqref{inl:e1}, and the second term comes from the gradient bound \eqref{eq:WCder1} and the fact that $\abs{r'(y)} \leq 4$, which can be seen from \eqref{eq:rform}. Now using the gradient bounds \eqref{eq:WCder1} and \eqref{eq:WCder2}, together with the facts that $\abs{r'(\abs{\zeta}-)} \leq 4$ and $\delta \leq 1$, we see that
\begin{align*}
&\ \abs{r'(x-)} \abs{f_h'(y)}1(x = -\zeta) \\
\leq&\ \abs{r'(x-)} \abs{f_h'(x)}1(x = -\zeta) + \abs{r'(x-)} \abs{f_h'(x) - f_h'(y)}1(x = -\zeta) \\
\leq&\ \frac{C}{\mu }\Big(1 +  \frac{1}{\abs{\zeta}}\Big) 1(x = -\zeta) + \abs{r'(x-)} 1(x = -\zeta) \int_{-\zeta}^{-\zeta + \delta} \abs{f_h''(u)} du \\
\leq&\ \frac{C}{\mu }\Big(1 +  \frac{1}{\abs{\zeta}}\Big) 1(x = -\zeta),
\end{align*}
and therefore
\begin{align*}
\abs{r'(y)-r'(x-)}\abs{f_h'(y)} \leq&\ \frac{C\delta}{\mu }\Big(1 +  \frac{1}{\abs{\zeta}}\Big) 1(x \in [-1/\delta + \delta, -\zeta - \delta]) \\
&+ \frac{C}{\mu }\Big(1 +  \frac{1}{\abs{\zeta}}\Big) 1(x \in \{-1/\delta, -\zeta\}).
\end{align*}
Furthermore, 
\begin{align*}
\abs{r'(x-)}\abs{f_h'(x)-f_h'(y)} \leq&\ \abs{r'(x-)} \int_{x}^{x+\delta} \abs{f_h''(u)} du\\
\leq&\ \frac{C\delta}{\mu }\Big(1 +  \frac{1}{\abs{\zeta}}\Big)1(x \leq -\zeta-\delta) + \frac{C\delta}{\mu \abs{\zeta}}1(x = -\zeta) ,
\end{align*}
where in the second inequality we used that $\abs{r'(x)} \leq 4$ and the gradient bound in \eqref{eq:WCder2}. Recalling that $\delta \leq 1$, we can combine the bounds above to see that 
\begin{align*}
&\ \abs{r'(y)-r'(x-)}\abs{f_h'(y)} + \abs{r'(x-)}\abs{f_h'(x)-f_h'(y)} \notag \\
\leq&\ \frac{C\delta}{\mu } \Big[ \Big(1 +  \frac{1}{\abs{\zeta}}\Big) 1(x \leq -\zeta-\delta)+ \frac{1}{\delta} \Big(1 +  \frac{1}{\abs{\zeta}}\Big)1(x\in \{-1/\delta, -\zeta\}) \Big],
\end{align*}
which proves \eqref{eq:epbound1}.

 The proofs for \eqref{eq:epbound2}, \eqref{eq:epbound3}, and \eqref{eq:epbound4}, are nearly identical to the proofs of \eqref{eq:ebound2}, \eqref{eq:ebound3}, and \eqref{eq:ebound4} from Lemma~\ref{lem:llminus}, respectively, and we do not repeat them here. The only differences to note is that \eqref{eq:epbound2} is separated into the cases $x \leq -\zeta - \delta$ and $x \geq -\zeta$, as opposed to \eqref{eq:ebound2} which has $x \leq -\zeta$ and $x \geq -\zeta + \delta$. Likewise, \eqref{eq:epbound4} contains $1(x \in [-1/\delta, -\zeta])$, whereas \eqref{eq:ebound4} contains $1(x \in [-1/\delta + \delta, -\zeta])$.

 Lastly we prove \eqref{eq:epbound5}. From the form of $a'(x)$ in \eqref{eq:ap}, we see that
\begin{align*}
\abs{\frac{2a'(x-)}{a^2(x)} - \frac{2a'(y)}{a^2(y)}} =&\ \abs{\frac{2a'(x-)}{a^2(x)} - \frac{2a'(y)}{a^2(y)}} 1(x \in [-1/\delta + \delta, -\zeta-\delta]) \\
&+ \abs{\frac{2a'(y)}{a^2(y)}} 1(x = -1/\delta) + \abs{\frac{2a'(x-)}{a^2(x-)}} 1(x = -\zeta).
\end{align*}  
We can repeat the argument from \eqref{inl:e5} to get
\begin{align*}
&\abs{\frac{2a'(x-)}{a^2(x)} - \frac{2a'(y)}{a^2(y)}} \abs{h(y)} \\
 \leq&\ \frac{4\delta}{\mu } \delta(\delta^2+1)1(x \in [-1/\delta + \delta, -\zeta - \delta])  +  \abs{\frac{2a'(y)}{a^2(y)}} \abs{y}1(x=-1/\delta)\\
 & +  \abs{\frac{2a'(x-)}{a^2(x-)}}\abs{y}1(x = -\zeta).
\end{align*}
Then using \eqref{eq:ap1}, the form of $a'(x)$ in \eqref{eq:ap}, and the fact that $a(x) \leq 1/\mu$, we can bound the term above by
\begin{align*}
&\ \frac{C\delta}{\mu }1(x \in [-1/\delta + \delta, -\zeta - \delta]) +  \frac{2}{\abs{a(y)}} \abs{\frac{ya'(y)}{a(y)}}1(x=-1/\delta)  \\
&+  \frac{2}{\abs{a(x-)}} \abs{\frac{(\abs{x}+\delta)a'(x-)}{a(x-)}}1(x = -\zeta) \\
\leq&\ \frac{C\delta}{\mu }1(x \in [-1/\delta + \delta, -\zeta - \delta]) + \frac{C}{\mu }1(x=-1/\delta) + \frac{C}{\mu}1(x = -\zeta).
\end{align*}
Hence,
\begin{align*}
\abs{\frac{2a'(x-)}{a^2(x)} - \frac{2a'(y)}{a^2(y)}} \abs{h(y)} \leq&\ \frac{C\delta}{\mu} \Big[ 1(x \in [-1/\delta + \delta, -\zeta - \delta]) + \frac{1}{\delta} 1(x \in \{-1/\delta, -\zeta\})\Big],
\end{align*}
which proves \eqref{eq:epbound5} and concludes the proof of this lemma.
\end{proof}

\section{Probability Metrics}
\label{app:probmet}
Let $W_2$ be the class of functions defined in \eqref{eq:spacew2}, i.e.\ the class of differentiable functions $h(x): \R \to \R$ such that both $h(x)$ and $h'(x)$ belong to $\lipone$. 
For two random variables $U$ and $V$, define their $W_2$ distance to be 
\begin{equation}
  \label{eq:dW}
  d_{W_2}(U, V) = \sup_{h\in {W_2}} \abs{\EE[h(U)] -\EE[h(V)]}.
\end{equation}
When ${W_2}$ in (\ref{eq:dW}) is replaced by 
\begin{displaymath}
  {\cal H}_{K}=\{1_{(-\infty, a]}(x): a\in \R \},
\end{displaymath}
the corresponding distance is the Kolmogorov distance, denoted by $d_K(U, V)$. In this section we prove the following relationship between the $W_2$ and Kolmogorov distances. This lemma is a modified version of \cite[Proposition 1.2]{Ross2011}. 
\begin{lemma}
\label{lem:pmetrics}
Let $U, V$ be two random variables, and assume that $V$ has a density bounded by some constant $C > 0$. If $d_{W_2}(U, V) < 4C$, then 
\begin{equation}
  \label{eq:dwdKbound}
  d_K(U, V) \le 5\Big(\frac{C}{2}\Big)^{2/3} d_{W_2}(U, V)^{1/3}.
\end{equation}
\end{lemma}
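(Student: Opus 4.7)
The plan is a classical smoothing/sandwich argument, in the spirit of Proposition~1.2 of \cite{Ross2011} but adapted to the second-order Lipschitz structure encoded in $W_2$. The key observation is that every $h \in W_2$ satisfies both $|h'| \le 1$ and $|h''| \le 1$, so a smoother of transition width $\epsilon$ may be rescaled by $\epsilon^{2}$ (rather than merely $\epsilon$, as in the Wasserstein-1 comparison) while still lying in $W_2$. This extra factor of $\epsilon$ is what upgrades the familiar $d_W^{1/2}$-rate to a $d_{W_2}^{1/3}$-rate.

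For each $a \in \R$ and each parameter $\epsilon \in (0, 1]$, I would construct a sandwich pair $h_a^- \le 1_{(-\infty,a]} \le h_a^+$ out of a two-piece quadratic spline. Concretely, $h_a^+$ will equal $1$ on $(-\infty, a]$, descend smoothly to $0$ on $[a, a + 2\epsilon]$, and vanish thereafter, with a direct computation yielding $\|(h_a^+)'\|_\infty = 1/\epsilon$, $\|(h_a^+)''\|_\infty = 1/\epsilon^2$, and $\int_\R (h_a^+ - h_a^-)(x)\, dx = 2\epsilon$, where $h_a^-(x) := h_a^+(x + 2\epsilon)$ is the reflected copy.

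Because $\epsilon^{2} h_a^\pm$ then satisfies both $|(\epsilon^{2} h_a^\pm)'| \le \epsilon \le 1$ and $|(\epsilon^{2} h_a^\pm)''| \le 1$, it lies in $W_2$, and the definition of $d_{W_2}$ gives $|\EE h_a^\pm(U) - \EE h_a^\pm(V)| \le d_{W_2}(U, V)/\epsilon^{2}$. Combining this with the sandwich $P(U \le a) - P(V \le a) \le \EE h_a^+(U) - \EE h_a^-(V)$, the density bound $\EE[h_a^+(V) - h_a^-(V)] \le C \int (h_a^+ - h_a^-)\, dx = 2C\epsilon$, and its mirror would yield
\[
d_K(U, V) \le \frac{d_{W_2}(U, V)}{\epsilon^{2}} + 2C\epsilon \qquad \text{for every } \epsilon \in (0, 1].
\]
Minimizing in $\epsilon$ gives $\epsilon^{\ast} = (d_{W_2}/C)^{1/3}$ and value $3 C^{2/3} d_{W_2}^{1/3}$, which is majorized by $5(C/2)^{2/3} d_{W_2}^{1/3}$ because $3 \cdot 2^{2/3} < 5$; this proves the lemma in the regime $d_{W_2} \le C$.

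The hard part will be covering the remaining range $C \le d_{W_2} < 4C$, where $\epsilon^{\ast}$ exceeds $1$ and the $\epsilon^{2}$-rescaling falls out of $W_2$. Here I would fall back on the weaker first-order scaling $\epsilon\, h_a^\pm \in W_2$, which is valid for every $\epsilon \ge 1$ (since then the second-derivative constraint becomes automatic), producing the Wasserstein-1-style bound $d_K \le 2\sqrt{2 C d_{W_2}}$. The hypothesis $d_{W_2} < 4C$ is precisely the threshold at which this second bound, combined with the one above, can be shown by elementary arithmetic to remain below $5(C/2)^{2/3} d_{W_2}^{1/3}$ uniformly. The kink of the maximum scaling factor $\min(\epsilon, \epsilon^{2})$ at $\epsilon = 1$---forced by the joint requirement $|h'|, |h''| \le 1$ on $W_2$---is the main technical inconvenience, but does not spoil the final $d_{W_2}^{1/3}$ rate.
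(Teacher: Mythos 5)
Your overall strategy (quadratic smoothing of the indicator, rescaling into $W_2$, and trading a $1/\epsilon^2$ term against a density term) is exactly the paper's. However, there is a genuine quantitative gap in the regime $C < d_{W_2}(U,V) < 4C$, and the claimed constant $5(C/2)^{2/3}$ is not recoverable from your bounds. The culprit is the error term $2C\epsilon$ from the full sandwich $\EE[h_a^+(V)-h_a^-(V)] \le C\int(h_a^+-h_a^-) = 2C\epsilon$: you do not need the lower smoother $h_a^-$ at all, since $P(V\le a)=\EE\,1_{(-\infty,a]}(V)$ is available exactly. Writing instead $P(U\le a)-P(V\le a)\le[\EE h_a^+(U)-\EE h_a^+(V)]+\EE[h_a^+(V)-1_{(-\infty,a]}(V)]$, the second term is $\le C\int_a^{a+2\epsilon}h_a^+ = C\epsilon$, half the size, and this factor of two is decisive. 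Concretely, with your $2C\epsilon$ error the best bound at $d_{W_2}=2C$ is $d_{W_2}+2C=4C$ (attained at $\epsilon=1$), but the target is $5(C/2)^{2/3}(2C)^{1/3}=5\cdot 2^{-1/3}C\approx 3.97C$; moreover your fallback $2\sqrt{2Cd_{W_2}}$ does not salvage this, since $2\sqrt{2t}\le 5\cdot 2^{-2/3}t^{1/3}$ forces $t\le 2^{13/6}/5$ raised to the $-6$, roughly $t\lesssim 1.9$, which lies \emph{outside} the range $t\ge 2$ where that fallback is even valid (your $\epsilon^\ast=\sqrt{d_{W_2}/(2C)}\ge 1$). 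So the ``elementary arithmetic'' you defer to cannot close the case $d_{W_2}\in[2C,4C)$.

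The paper avoids both the factor of two and the case split. With the one-sided error $C\epsilon$, it does \emph{not} minimize in $\epsilon$: it chooses the deliberately suboptimal $\epsilon=(d_{W_2}/(4C))^{1/3}$ (in your parametrization; the paper uses a transition of full width $\epsilon$, hence $\|h_\epsilon''\|=4/\epsilon^2$, error $C\epsilon/2$, and the choice $\epsilon=(2d_{W_2}/C)^{1/3}<2$). That choice lies in $(0,1)$ precisely when $d_{W_2}<4C$, so the $\epsilon^2$ rescaling into $W_2$ always applies, and one computes $d_{W_2}/\epsilon^2+C\epsilon = (2^{4/3}+2^{-2/3})C^{2/3}d_{W_2}^{1/3}=5(C/2)^{2/3}d_{W_2}^{1/3}$ exactly, with no split at $\epsilon=1$. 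The hypothesis $d_{W_2}<4C$ is there to keep this single chosen $\epsilon$ inside the admissible range, not (as in your reading) to make a large-$\epsilon$ Wasserstein-1 bound kick in.
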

We wish to combine Lemma~\ref{lem:pmetrics} with Theorem~\ref{thm:w2}, but to do so we need a bound on the density of $Y(\infty)$.
\begin{lemma} \label{lem:densbound}
Let $\nu(x) : \R \to \R$ be the density of $Y(\infty)$, whose form is given in \eqref{eq:stdden}. Then for all $n \geq 1, \lambda > 0$, and $\mu > 0$ satisfying $1  \geq R < n $, 
\begin{align*}
\nu(x) \leq 4, \quad x \in \R.
\end{align*}
\end{lemma}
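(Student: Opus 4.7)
The plan is to locate the unique maximum of $\nu$ and bound its value explicitly, using a Gamma structure that appears naturally after a change of variable.

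First I would locate the mode. Since $\nu(x)=\kappa\,p(x)/a(x)$ and $p'(x)=r(x)p(x)=(2b(x)/a(x))p(x)$, differentiation gives
\begin{equation*}
\nu'(x) \;=\; \kappa\,p(x)\,\frac{2b(x)-a'(x)}{a^{2}(x)}.
\end{equation*}
Using \eqref{eq:bdef}, \eqref{eq:adef}, and \eqref{eq:ap}, a piecewise sign analysis yields: on $(-\infty,-1/\delta]$, $a'=0$ and $b(x)=-\mu x>0$; on $(-1/\delta,-\zeta]$, $2b(x)-a'(x)=-\mu(2x+\delta)$, which vanishes uniquely at $x^{*}=-\delta/2$ and changes sign from $+$ to $-$; on $(-\zeta,\infty)$, $b(x)=\mu\zeta<0$ and $a'=0$. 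Because $\delta\le 1$ implies $-1/\delta<-\delta/2$ and $\zeta<0$ implies $-\delta/2<-\zeta$, the point $x^{*}$ sits inside the middle region, so $\nu$ is strictly unimodal with its unique maximum at $x^{*}$.

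Next I would bound $p(x^{*})/a(x^{*})$ by direct computation. From \eqref{eq:pdef}, with $c=4/\delta^{2}$,
\begin{equation*}
p(x^{*})\;=\;2^{-c}\bigl(2-\tfrac{\delta^{2}}{2}\bigr)^{c}\,e\;=\;e\bigl(1-\tfrac{\delta^{2}}{4}\bigr)^{c}.
\end{equation*}
The inequality $\log(1-u)\le -u$ with $u=\delta^{2}/4$ gives $(1-\delta^{2}/4)^{c}\le e^{-1}$, hence $p(x^{*})\le 1$. Combined with $a(x^{*})=\mu(2-\delta^{2}/2)\ge 3\mu/2$ for $\delta\le 1$, this yields $p(x^{*})/a(x^{*})\le 2/(3\mu)$.

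The main obstacle is the uniform lower bound on $\int_{\mathbb R} p(y)/a(y)\,dy$. My strategy is to restrict to the middle region $[-1/\delta,-\zeta]$ and apply the change of variable $u=2+\delta y$, which casts the integrand as a Gamma kernel. Indeed, on $[-1/\delta,-\zeta]$,
\begin{equation*}
\frac{p(y)}{a(y)} \;=\; \frac{2^{-c}}{\mu}(2+\delta y)^{c-1}e^{-2y/\delta},
\end{equation*}
so the substitution yields
\begin{equation*}
\int_{-1/\delta}^{-\zeta}\frac{p(y)}{a(y)}\,dy
\;=\;\frac{1}{\mu\delta}\Bigl(\frac{e}{c}\Bigr)^{c}\Gamma(c)\,\mathbb P\bigl(U\in[1,\,2+\delta|\zeta|]\bigr),
\end{equation*}
where $U\sim\mathrm{Gamma}(c,c/2)$ has mean $2$ and variance $\delta^{2}$. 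Stirling's formula gives $(e/c)^{c}\Gamma(c)=\sqrt{2\pi/c}\,(1+O(1/c))=\delta\sqrt{\pi/2}\,(1+O(\delta^{2}))$. The probability factor is uniformly bounded below by a positive absolute constant: the mode of $U$ is $2-\delta^{2}/2\in(1,2)$ and its median is at most its mean $2$, so $\mathbb P(U\in[1,2])$ is bounded below (a short case split on whether $\delta\le 1/2$ or $\delta\in[1/2,1]$ suffices, with Chebyshev handling the first range and an explicit computation at $c=4$ for the worst case). Combining these estimates produces $\int p/a\ge 1/(6\mu)$, and hence
\begin{equation*}
\nu(x)\;\le\;\nu(x^{*})\;=\;\frac{p(x^{*})/a(x^{*})}{\int p/a}\;\le\;\frac{2/(3\mu)}{1/(6\mu)}\;=\;4.
\end{equation*}
The delicate part is verifying the probability lower bound uniformly in $\delta\in(0,1]$ and $|\zeta|>0$; everything else is bookkeeping.
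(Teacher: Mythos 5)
Your approach is genuinely different from the paper's.  The paper writes $\nu$ explicitly on each of the three intervals $(-\infty,-1/\delta]$, $[-1/\delta,-\zeta]$, $[-\zeta,\infty)$ with three matching constants $a_1,a_2,a_3$, uses continuity of $\nu$ at the two breakpoints to relate them, and then bounds each constant by comparison with one piece of the normalization integral (Taylor-expanding $2\log(2+\delta y)-\delta y$ to reduce the middle integral to a Gaussian one, and treating $|\zeta|\le 1$ and $|\zeta|\ge 1$ separately on $[-\zeta,\infty)$).  You instead locate the unique mode $x^*=-\delta/2$ from the sign of $2b-a'$, bound the unnormalized peak $p(x^*)/a(x^*)\le 2/(3\mu)$ via $(1-1/c)^c\le e^{-1}$, and then lower-bound the normalization integral through the substitution $u=2+\delta y$, which exposes a $\Gamma(c,c/2)$ kernel ($c=4/\delta^2$) and lets you invoke Stirling, $(e/c)^c\Gamma(c)\ge\sqrt{2\pi/c}=\delta\sqrt{\pi/2}$.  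All of that is correct, and the Gamma interpretation is a nice structural observation that the paper's computation obscures.

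The one place that still needs to be nailed down is the uniform lower bound on $\Prob(U\in[1,2])$ over all $c\ge 4$.  For $\delta\le 1/2$ your median-plus-Chebyshev argument delivers $\Prob(U\le 2)-\Prob(U<1)\ge 1/2-\delta^2\ge 1/4$, which is fine (the inequality $\mathrm{median}\le\mathrm{mean}$ for the Gamma family does need a citation, e.g.\ Chen--Rubin).  But for $\delta\in(1/\sqrt2,1]$, i.e.\ $c\in[4,8)$, Chebyshev returns nothing, and Chernoff gives only $\Prob(U<1)\le e^{-c(\log 2-1/2)}\approx 0.46$ at $c=4$, which is too weak: combined with your other constants it would give $\nu(x^*)\lesssim 13$, not $4$.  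You need the exact value $\Prob(U<1)\approx 0.143$ at $c=4$, and beyond that you must justify why $c=4$ is the worst case on $[4,8)$---a monotonicity claim about $c\mapsto\Prob(U\in[1,2])$ that is plausible (numerically the probability rises from $\approx 0.42$ at $c=4$ toward $1/2$) but is not self-evident and is not established by the sketch.  Once that finite-range verification is pinned down the argument closes and in fact yields a constant sharper than $4$; as written, it is the one genuine gap in an otherwise correct and more conceptual proof than the paper's.
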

Now combining Theorem~\ref{thm:w2} with Lemmas~\ref{lem:pmetrics} and \ref{lem:densbound} implies that $d_K\big(\tilde X(\infty), Y(\infty) \big)$ converges to zero at a rate of $1/R^{1/3}$. However, we believe this rate to be sub-optimal, and that $d_K\big(\tilde X(\infty), Y(\infty) \big)$ actually vanishes at a rate of $1/\sqrt{R}$. This is supported by numerical results in Appendix~\ref{app:numeric}.

\begin{proof}[Proof of Lemma~\ref{lem:pmetrics} ]
Fix $a \in \R$ and let $h(x) = 1_{(-\infty, a]}(x)$. Now fix $\epsilon \in (0, 2)$ and define the smoothed version
\begin{align*}
h_{\epsilon}(x) = 
\begin{cases}
1, \quad &x \leq a, \\
-\frac{2}{\epsilon^2}(x-a)^2 + 1, \quad &x \in [a, a + \epsilon/2], \\
\frac{2}{\epsilon^2}\big[x - (a + \epsilon/2) \big]^2 - \frac{2}{\epsilon}(x-a) + \frac{3}{2}, \quad &x \in [a + \epsilon/2, a + \epsilon], \\
0, \quad &x \geq a + \epsilon.
\end{cases}
\end{align*}
Since we chose $\epsilon < 2$, it is not hard to see that 
\begin{align*}
\abs{h_{\epsilon}'(x)} \leq \frac{4}{\epsilon^2}, \quad \abs{h_{\epsilon}''(x)} \leq \frac{4}{\epsilon^2}, \quad x \in \R,
\end{align*}
where $h_{\epsilon}''(x)$ is interpreted as the left derivative of $h_{\epsilon}'(x)$ when $x \in \{a, a+\epsilon/2, a+\epsilon\}$. Therefore, $\frac{\epsilon^2}{4} h_{\epsilon}(x) \in W_2$. Then 
\begin{align*}
\EE h(U) - \EE h(V) =&\ \EE h(U) - \EE h_{\epsilon}(V) + \EE \big[ h_{\epsilon}(V) - h(V)\big]\\
\leq&\ \EE h_{\epsilon}(U) - \EE h_{\epsilon}(V) + C \int_{a}^{a+\epsilon} h_{\epsilon}(x) dx \\
=&\ \EE h_{\epsilon}(U) - \EE h_{\epsilon}(V) + C\epsilon/2 \\
\leq&\ \frac{4}{\epsilon^2} d_{W_2}(U,V) + C\epsilon /2,
\end{align*}
Choose $\epsilon = \Big(\frac{2d_{W_2}(U,V)}{C}\Big)^{1/3}$, which lies in $(0,2)$ by our assumption that $d_{W_2}(U,V) < 4C$. Then
\begin{align*}
\EE h(U) - \EE h(V) \leq 5\Big(\frac{C}{2}\Big)^{2/3} d_{W_2}(U, V)^{1/3}.
\end{align*}
Using the function $\tilde h_{\epsilon}(x) \equiv h_{\epsilon}(x+\epsilon)$, a similar argument can be repeated to show that 
\begin{align*}
 \EE h(V) - \EE h(U) =&\ \EE h(V) - \EE \tilde h_{\epsilon}(V) + \EE \tilde h_{\epsilon}(V) - \EE h(U) \\ 
\leq&\ 5\Big(\frac{C}{2}\Big)^{2/3} d_{W_2}(U, V)^{1/3},
\end{align*}
concluding the proof.
\end{proof}

\begin{proof}[Proof of Lemma~\ref{lem:densbound}]
One can check that \eqref{eq:stdden} translates into
\begin{align*}
\nu(x) =
\begin{cases}
\frac{a_1}{\mu }e^{-x^2}, \quad x \leq -1/\delta,\\
\frac{a_2}{\mu(2 + \delta x)}e^{\frac{2}{\delta^2} [2\log(2 + \delta x) - \delta x]}, \quad x \in [-1/\delta, -\zeta], \\
\frac{a_3}{\mu(2 + \delta \abs{\zeta})}e^{\frac{-2\abs{\zeta} x }{2+\delta \abs{\zeta}}}, \quad x \geq -\zeta,
\end{cases}
\end{align*}
where the constants $a_1, a_2, a_3$ make the $\nu(x)$ continuous and integrate to one. To prove that $\nu(x)$ is bounded, we need to bound these three constants. We know that 
\begin{align}
&\ a_1 \int_{-\infty}^{-1/\delta} \frac{1}{\mu }e^{-y^2} dy + a_2 \int_{-1/\delta}^{-\zeta} \frac{1}{\mu(2 + \delta y)}e^{\frac{2}{\delta^2} [2\log(2 + \delta y) - \delta y]} dy  \notag \\
&+ a_3 \int_{-\zeta}^{\infty} \frac{1}{\mu(2 + \delta \abs{\zeta})}e^{\frac{-2\abs{\zeta} y }{2+\delta \abs{\zeta}}}dy = 1. \label{eq:densintegral}
\end{align}
We first bound $\nu(x)$ when $x \leq -1/\delta$. Since $a_1$ and $a_2$ are chosen to make $\nu(x)$ continuous at $x = -1/\delta$, we know that $a_1 e^{-1/\delta^2} = a_2 e^{2/\delta^2}$, or $a_2 = a_1 e^{-3/\delta^2}$. Substituting this into \eqref{eq:densintegral}, we see that

\begin{align*}
a_1 \leq \frac{1}{e^{-3/\delta^2}\int_{-1/\delta}^{-\zeta} \frac{1}{\mu(2 + \delta y)}e^{\frac{2}{\delta^2} [2\log(2 + \delta y) - \delta y]} dy} \leq \frac{2\mu }{e^{-3/\delta^2}\int_{-1/\delta}^{0} e^{\frac{2}{\delta^2} [2\log(2 + \delta y) - \delta y]} dy}.
\end{align*}
The derivative of $e^{\frac{2}{\delta^2} [2\log(2 + \delta y) - \delta y]} $ is positive on the interval $[-1/\delta, 0]$. Therefore, on the interval $[-1/\delta, 0]$, this function achieves its minimum at $y = -1/\delta$, implying that $e^{\frac{2}{\delta^2} [2\log(2 + \delta y) - \delta y]}\geq e^{2/\delta^2}$ for $y \in [-1/\delta, 0]$, and 
\begin{align*}
a_1 \leq \frac{2\mu }{e^{-3/\delta^2}\int_{-1/\delta}^{0} e^{\frac{2}{\delta^2} [2\log(2 + \delta y) - \delta y]} dy} \leq \frac{2\mu }{e^{-3/\delta^2}\int_{-1/\delta}^{0} e^{2/\delta^2} dy} = 2 \mu \delta e^{1/\delta^2}.
\end{align*}
Hence, for $x \leq -1/\delta$, 
\begin{align*}
\nu(x) \leq 2 \mu \delta e^{1/\delta^2} \frac{1}{\mu }e^{-x^2} \leq 2\delta  \leq 2,
\end{align*}
where in the last inequality we used the fact that $R \geq 1$, or $\delta \leq 1$. We now bound $\nu(x)$ when $x \in [-1/\delta, -\zeta]$. By \eqref{eq:densintegral}, 
\begin{align*}
a_2 \leq \frac{1}{\int_{-1/\delta}^{-\zeta} \frac{1}{\mu(2 + \delta y)}e^{\frac{2}{\delta^2} [2\log(2 + \delta y) - \delta y]} dy} \leq \frac{2\mu }{\int_{-1/\delta}^{0} e^{\frac{2}{\delta^2} [2\log(2 + \delta y) - \delta y]} dy}.
\end{align*}
Using the Taylor expansion 
\begin{align*}
2\log(2 + \delta y) = 2\log(2) + \frac{2}{2}\delta y - \frac{2}{(2 + \xi(\delta y))^2} \frac{(\delta y)^2}{2},
\end{align*}
where $\xi(\delta y) \in [\delta y, 0]$, we see that
\begin{align*}
e^{\frac{2}{\delta^2} [2\log(2 + \delta y) - \delta y]} =&\ e^{\frac{4}{\delta^2} \log(2)} e^{\frac{2}{\delta^2} \Big[ \frac{-\delta^2 y^2}{(2 + \xi(\delta y))^2} \Big] } \geq e^{\frac{4}{\delta^2} \log(2)}e^{-2y^2}, \quad y \in [-1/\delta, 0].
\end{align*}
Therefore, 
\begin{align*}
a_2 \leq \frac{2\mu }{\int_{-1/\delta}^{0} e^{\frac{2}{\delta^2} [2\log(2 + \delta y) - \delta y]} dy} \leq \frac{2\mu }{e^{\frac{4}{\delta^2} \log(2)} \int_{-1}^{0} e^{-2y^2}dy} = \frac{2\mu  e^{-\frac{4}{\delta^2} \log(2)}}{\int_{-1}^{0} e^{-2y^2}dy},
\end{align*}
where in the second inequality we used the fact that $\delta \leq 1$. Hence, for $x \in [-1/\delta, -\zeta]$, 
\begin{align*}
\nu(x) = \frac{a_2}{\mu(2 + \delta x)}e^{\frac{2}{\delta^2} [2\log(2 + \delta x) - \delta x]} \leq \frac{2 e^{-\frac{4}{\delta^2} \log(2)}e^{\frac{2}{\delta^2} [2\log(2 + \delta x) - \delta x]}}{\int_{-1}^{0} e^{-2y^2}dy} \leq \frac{2}{\int_{-1}^{0} e^{-2y^2}dy} \leq 4,
\end{align*}
where in the second last inequality we used the fact that on the interval $[-1/\delta, -\zeta]$, the function $e^{\frac{2}{\delta^2} [2\log(2 + \delta x) - \delta x]}$ achieves its maximum at $x = 0$. This fact can be checked by differentiating the function.

Lastly, we bound $\nu(x)$ when $x \geq -\zeta$. By \eqref{eq:densintegral}, 
\begin{align*}
a_3 \leq \frac{1}{\int_{-\zeta}^{\infty} \frac{1}{\mu(2 + \delta \abs{\zeta})}e^{\frac{-2\abs{\zeta} y }{2+\delta \abs{\zeta}}}dy} = 2\mu \abs{\zeta}e^{\frac{2\zeta^2}{2+\delta \abs{\zeta}}},
\end{align*}
which means that for $x \geq -\zeta$,
\begin{align}
\nu(x) = \frac{a_3}{\mu(2 + \delta \abs{\zeta})}e^{\frac{-2\abs{\zeta} x }{2+\delta \abs{\zeta}}} \leq \frac{2\abs{\zeta}}{2 + \delta \abs{\zeta}} \leq \abs{\zeta}, \label{eq:nuboundzeta}
\end{align}
which a useful bound only when $\abs{\zeta}$ is small, say $\abs{\zeta} \leq 1$. Now suppose $\abs{\zeta} \geq 1$. Since $\nu(x)$ is continuous at $x = -\zeta$, we have
\begin{align*}
a_2 = a_3 e^{\frac{-2\zeta^2 }{2+\delta \abs{\zeta}}}e^{-\frac{2}{\delta^2} [2\log(2 + \delta \abs{\zeta}) - \delta \abs{\zeta}]}.
\end{align*}
We insert this into \eqref{eq:densintegral} to see that for $x \geq -\zeta$, 
\begin{align*}
a_3 \leq&\ \frac{e^{\frac{2\zeta^2 }{2+\delta \abs{\zeta}}}}{\int_{-1/\delta}^{-\zeta} \frac{1}{\mu(2 + \delta y)}e^{\frac{2}{\delta^2} [2\log(2 + \delta y) - \delta y]}e^{-\frac{2}{\delta^2} [2\log(2 + \delta \abs{\zeta}) - \delta \abs{\zeta}]} dy} \\
\leq&\ \frac{e^{\frac{2\zeta^2 }{2+\delta \abs{\zeta}}}}{\int_{0}^{-\zeta} \frac{1}{\mu(2 + \delta y)} dy} 
\leq \frac{e^{\frac{2\zeta^2 }{2+\delta \abs{\zeta}}}}{\int_{0}^{-\zeta} \frac{1}{\mu(2 + \delta \abs{\zeta})} dy}
= \mu \frac{2 + \delta \abs{\zeta}}{\abs{\zeta}}e^{\frac{2\zeta^2 }{2+\delta \abs{\zeta}}},
\end{align*}
where in the second inequality we used that 
\begin{align*}
e^{\frac{2}{\delta^2} [2\log(2 + \delta y) - \delta y]}e^{-\frac{2}{\delta^2} [2\log(2 + \delta \abs{\zeta}) - \delta \abs{\zeta}]} \geq 1 , \quad y \in [0,-\zeta],
\end{align*}
which is true because the derivative of the function $e^{\frac{2}{\delta^2} [2\log(2 + \delta y) - \delta y]}$ is negative on the interval $[0, -\zeta]$. Therefore, for $x \geq -\zeta$, 
\begin{align*}
\nu(x) = \frac{a_3}{\mu(2 + \delta \abs{\zeta})}e^{\frac{-2\abs{\zeta} x }{2+\delta \abs{\zeta}}} \leq \frac{1}{\abs{\zeta}} e^{\frac{2\zeta^2 }{2+\delta \abs{\zeta}}} e^{\frac{-2\abs{\zeta} x }{2+\delta \abs{\zeta}}} \leq \frac{1}{\abs{\zeta}}.
\end{align*}
Together with \eqref{eq:nuboundzeta}, this implies that $\nu(x) \leq 1$ for $x \geq -\zeta$. This concludes the proof of this lemma.
\end{proof}

\section{Additional Numerical Results}
\label{app:numeric}
In this section we present some numerical results comparing $Y_0(\infty)$ and $Y(\infty)$. Although Theorem~\ref{thm:w2} is only stated in the context of the $W_2$ metric, we show that $Y(\infty)$ is a superior approximation when it comes to estimating the both the probability mass function, and cumulative distribution function.  Let $\{\pi_k\}_{k=0}^{\infty}$ be the distribution of $X(\infty)$. For $k \in \Z_+$ define 
\begin{align*}
\pi^{Y_0}_k =& \Prob\Big(Y_0(\infty) \in \big[\delta(k - R)-\delta/2, \delta (k - R)+\delta/2\big]\Big),\\
\pi^{Y}_k =& \Prob\Big(Y(\infty) \in \big[\delta(k - R)-\delta/2, \delta (k - R)+\delta/2\big]\Big).
\end{align*}

\begin{table}[h]
  \begin{center}
   \begin{tabular}{rcc|ccc }
\multicolumn{3}{c}{$n=5$}& \multicolumn{3}{c}{$n=100$} \\
$R$ & $\sup_{k \in \Z_+} \big|\pi_k - \pi^{Y_0}_k \big|$ & $\sup_{k \in \Z_+} \big|\pi_k - \pi^{Y}_k \big|$   &R & $\sup_{k \in \Z_+} \big|\pi_k - \pi^{Y_0}_k \big|$ & $\sup_{k \in \Z_+} \big|\pi_k - \pi^{Y}_k \big|$  \\
\hline
 3        &   $2.72\times 10^{-2}$  &$5.84\times 10^{-3}$  & 60 & $1.59\times 10^{-3}$ & $2.95\times 10^{-5}$ \\
 4        &  $1.72\times 10^{-2}$  & $2.67\times 10^{-3}$  & 80  & $1.16\times 10^{-3}$ & $1.92\times 10^{-5}$ \\
 4.9        & $2.51\times 10^{-3}$  &$3.54\times 10^{-4}$  & 98  & $3.59\times 10^{-4}$ & $9.81\times 10^{-6}$ \\
 4.95        &  $1.28\times 10^{-3}$  & $1.78\times 10^{-4}$ & 99  & $2.07\times 10^{-4}$ & $5.80\times 10^{-6}$ \\
 4.99        & $2.61\times 10^{-4}$  & $3.62\times 10^{-5}$& 99.98 & $4.71\times 10^{-5}$ & $1.34\times 10^{-6}$ 
  \end{tabular}
\end{center}
  \caption{Approximating the probability mass function of $\tilde X(\infty)$. \label{tabkolm}}
\end{table}

\begin{table}[h!]
  \begin{center}
   \begin{tabular}{rcc|ccc }
\multicolumn{3}{c}{$n=5$}& \multicolumn{3}{c}{$n=100$} \\
$R$ & $d_K(\tilde X(\infty), Y_0(\infty))$ & $d_K(\tilde X(\infty), Y(\infty))$   &R & $d_K(\tilde X(\infty), Y_0(\infty))$ & $d_K(\tilde X(\infty), Y(\infty))$ \\
\hline
 3        &   $1.32\times 10^{-1}$  &$9.27\times 10^{-2}$  & 60 & $3.43\times 10^{-2}$ & $2.58\times 10^{-2}$ \\
 4        &  $8.76\times 10^{-2}$  & $6.41\times 10^{-2}$  & 80  & $2.93\times 10^{-2}$ & $2.23\times 10^{-2}$ \\
 4.9        & $1.32\times 10^{-2}$  &$9.48\times 10^{-3}$  & 98  & $1.03\times 10^{-2}$ & $8.10\times 10^{-3}$ \\
 4.95        &  $6.84\times 10^{-3}$  & $4.84\times 10^{-3}$ & 99  & $5.86\times 10^{-3}$ & $4.53\times 10^{-3}$ \\
 4.99        & $1.41\times 10^{-3}$  & $9.84\times 10^{-4}$& 99.98 & $1.31\times 10^{-3}$ & $9.93\times 10^{-4}$ 
  \end{tabular}

\begin{tabular}{rc | c | c  }
 $n$ & $R$ &$d_K(\tilde X(\infty), Y_0(\infty))$ & $d_K(\tilde X(\infty), Y(\infty))$\\
\hline
5    & 4       &  $8.76 \times 10^{-2}$ & $6.41 \times 10^{-2}$ \\
50   & 46.59   &  $2.60 \times 10^{-2}$ & $2.11 \times 10^{-2}$ \\
500  & 488.94  &  $7.98 \times 10^{-3}$ & $6.48 \times 10^{-3}$  \\
5000 &  4965   &  $2.50 \times 10^{-3}$ & $2.03 \times 10^{-5}$ 
\end{tabular}
  \end{center}
  \caption{Approximating the cumulative distribution function of $\tilde X(\infty)$. As $R$ increases by a factor of $10$ in the second table, both $d_K(\tilde X(\infty), Y_0(\infty))$ decrease by a factor of $\sqrt{10}$. \label{tabkolm}}
\end{table}

\newpage
\bibliography{dai20160208}
\end{document}